\newtheorem{thm}{Theorem}[section]
\newtheorem{pro}[thm]{Proposition}
\newtheorem{lm}[thm]{Lemma}
\newtheorem{cor}[thm]{Corollary}
\newtheorem{wh}[thm]{Working Hypothesis}
\newtheorem{asr}[thm]{Assumption}
\numberwithin{equation}{section}
\theoremstyle{remark}
\newtheorem{exa}[thm]{Example}
\newtheorem{rem}[thm]{Remark}
\theoremstyle{definition}
\newtheorem{defn}[thm]{Definition}
\DeclareMathOperator*{\reg}{reg}
\DeclareMathOperator*{\scn}{sc}
\DeclareMathOperator*{\Irr}{Irr}
\DeclareMathOperator*{\Gal}{Gal}
\DeclareMathOperator*{\ad}{ad}
\DeclareMathOperator*{\Nrd}{Nrd}
\DeclareMathOperator*{\der}{der}
\DeclareMathOperator*{\Hom}{Hom}
\newcommand{\s}{\simeq}
\newcommand{\pp}{\mathfrak{p}}
\newcommand{\OO}{\mathcal{O}}
\newcommand{\vphi}{\varphi}
\newcommand{\ma}{\mathfrak{a}}
\newcommand{\St}{\mathsf{St}}
\newcommand{\tM}{\widetilde{M}}
\newcommand{\tsigma}{\widetilde{\sigma}}
\newcommand{\e}{\mathcal{E}_{u}^{\circ}}
\newcommand{\ed}{\mathcal{E}_{u}^{2}}
\newcommand{\esq}{\mathcal{E}^{2}}
\newcommand{\tlsigma}{\tilde{\sigma}}
\newcommand{\tlchi}{\tilde{\chi}}
\renewcommand{\AA}{\mathbb{A}}
\newcommand{\CC}{\mathbb{C}}
\newcommand{\RR}{\mathbb{R}}
\newcommand{\QQ}{\mathbb{Q}}
\newcommand{\ZZ}{\mathbb{Z}}
\newcommand{\M}{\mathbf{M}}
\newcommand{\Z}{\mathbf{Z}}
\newcommand{\F}{\mathbf{F}}
\newcommand{\D}{\mathbf{D}}
\newcommand{\G}{\mathbf{G}}
\newcommand{\wG}{\widetilde{\mathbf{G}}}
\newcommand{\wM}{\widetilde{\mathbf{M}}}
\newcommand{\ii}{\mathbf{\textit{i}}}
\newcommand{\dprod}{\displaystyle\prod}
\newcommand{\dint}{\displaystyle \int \nolimits}
\newcommand{\bG}{G}
\newcommand{\bA}{\mathbf A}
\newcommand{\bM}{M}
\newcommand{\tG}{\widetilde{G}}
\author{Kwangho Choiy}
\address{
Mathematics Department\\
Purdue University\\
West Lafayette, IN 47907\\
U.S.A.}
\email{kchoiy@math.purdue.edu}
\address{\textit{Current Address} : 
Department of Mathematics\\
Oklahoma State University\\
Stillwater, OK 74078-1058\\
U.S.A.}
\email{kwangho.choiy@okstate.edu}
\keywords{Plancherel measure, inner form, local to global global argument, cuspidal automorphic representation, Jacquet-Langlands correspondence}
\subjclass{Primary \textbf{22E50}, Secondary 11F70, 22E55, 22E35}
\begin{document}
\title[Transfer of Plancherel Measures between {$p$}-adic Inner Forms]{Transfer of Plancherel Measures for Unitary Supercuspidal Representations between {$p$}-adic Inner Forms}

\maketitle

\begin{abstract}
Let $F$ be a $p$-adic field of characteristic $0$, and let $M$ be an $F$-Levi subgroup of a connected reductive $F$-split group such that $\Pi_{i=1}^{r} SL_{n_i} \subseteq M \subseteq \Pi_{i=1}^{r} GL_{n_i}$ for positive integers $r$ and $n_i$. We prove that the Plancherel measure for any unitary supercuspidal representation of $M(F)$ is identically transferred under \textit{the local Jacquet-Langlands type correspondence} between $M$ and its $F$-inner forms, assuming a working hypothesis that Plancherel measures are invariant on a certain set. This work extends the result of Mui{\'c} and Savin (2000) for Siegel Levi subgroups of the groups $SO_{4n}$ and $Sp_{4n}$ under the local Jacquet-Langlands correspondence. It can be applied to a simply connected simple $F$-group of type $E_6$ or $E_7$, and a connected reductive $F$-group of type $A_{n}$, $B_{n}$, $C_n$ or $D_n$.
\end{abstract}

\section{Introduction} \label{intro}

The theory of Plancherel measures is well known for real groups \cite{arthur89f, hc76, ks82}. Arthur gave an explicit formula of the Plancherel measure in terms of Artin factors \cite{arthur89f}. It follows from the local Langlands correspondence for real groups \cite{lan89} that Plancherel measures are invariant on $L$-packets, and they are preserved by inner forms. Plancherel measures thus turn out to be identical if they are associated to the same $L$-parameter.

For $p$-adic groups, however, the behavior of the Plancherel measure is not completely understood. Although the Lefschetz principle \cite{hc162} conjectures that what is true for real groups is also true for $p$-adic groups, the $L$-packet invariance of the Plancherel measure is currently known only for some cases \cite{keysh, sh90, gtsp10, gt, gtan12, art11, gs12com, choiy2}.
Also, it was proved that the Plancherel measures are preserved by $p$-adic inner forms for the following cases in characteristic $0$.
Arthur and Clozel proved the argument for discrete series representations under the local Jacquet-Langlands correspondence between $GL_n$ and its inner forms \cite{ac89}. The author also verified it for depth-zero supercuspidal representations associated to tempered and tame regular semi-simple elliptic $L$-parameters with an unramified central character between an unramified group and its inner forms \cite{choiy2}. The approaches in both of two results are based on $p$-adic harmonic analysis. For the groups $SO_{4n}$ and $Sp_{4n}$, using a local to global argument, Mui{\'c} and Savin proved that Plancherel measures for unitary supercuspidal representations are preserved under the local Jacquet-Langlands correspondence between the Siegel Levi subgroup and its inner forms \cite{ms00}. In a similar way, Gan and Tantono identically transferred Plancherel measures attached to supercuspidal representations having the same $L$-parameter from the Levi subgroup $GL_r \times GSp_4$ of $GSp_{2r+4}$ to its inner forms \cite{gtan12}. 

The purpose of this paper is to prove that Plancherel measures attached to unitary supercuspidal representations are preserved \textit{under the local Jacquet-Langlands type correspondence} (defined below), assuming a working hypothesis that Plancherel measures are invariant on a certain set. To be precise, let $F$ denote a $p$-adic field of characteristic $0$, and let $M$ be an $F$-Levi subgroup of a connected reductive $F$-split group $G$ such that
\begin{equation} \label{cond on M}
\Pi_{i=1}^{r} SL_{n_i} \subseteq M \subseteq \Pi_{i=1}^{r} GL_{n_i} 
\end{equation}
for positive integers $r$ and $n_i$. Let $G'$ be an $F$-inner form of $G$, and let $M'$ be an $F$-Levi subgroup of $G'$ that is an $F$-inner form of $M$. Then, $M'$ satisfies the following property
\[
\Pi_{i=1}^{r} SL_{m_i}(D_{d_i}) \subseteq M'(F) \subseteq \Pi_{i=1}^{r} GL_{m_i}(D_{d_i}).
\]
Here $D_{d_i}$ denotes a central division algebra of dimension $d_i^2$ over $F$ where $n_i = m_i d_i$. Set $\tM(F) = \Pi_{i=1}^{r} GL_{n_i}(F)$ and $\tM'(F) = \Pi_{i=1}^{r} GL_{m_i}(D_{d_i})$. Denote by $\e(H(F))$ the set of equivalence classes of irreducible unitary supercuspidal representations of $H(F)$ for any algebraic $F$-group $H$. 

Given $\tau \in \e(M(F))$, we have $\tsigma \in \e(\tM(F))$ such that $\tau$ is isomorphic to an irreducible constituent of the restriction $\tsigma|_{M(F)}$. Denote by $\Pi_{\tsigma}(M(F))$ the set of equivalence classes of all irreducible constituents of $\tsigma|_{M(F)}$. Note that the set $\Pi_{\tsigma}(M(F))$ does not depend on the choice of $\tsigma$ and 
is contained in $\e(M(F))$.
On the other hand, we have a unique $\tsigma' \in \e(\tM'(F))$ corresponding to $\tsigma$ under the local Jacquet-Langlands correspondence. Let $\Pi_{\tsigma'}(M'(F))$ have the corresponding meaning for the $F$-inner form $M'$. 
We say that any two representations $\sigma \in \Pi_{\tsigma}(M(F))$ and $\sigma' \in \Pi_{\tsigma'}(M'(F))$ are \textit{under the local Jacquet-Langlands type (JL-type) correspondence}. This will be defined more generally in Definition \ref{def of LJL-type}.

Fix a representative $w \in G(F)$ of a Weyl element $\tilde{w}$ such that $\tilde{w}(\theta) \subseteq \Delta$. Here $\Delta$ denotes the set of simple roots of $A_0$ in $G$, $A_0$ denotes the split component of a minimal $F$-Levi subgroup $M_0$ of $G$, and $\theta$ denotes the subset of $\Delta$ such that $M=M_{\theta}$. We denote by $\mathfrak{a}^{*}_{M, \CC}$ the complex dual of the real Lie algebra of the split component $A_M$ of $M$. Given an irreducible admissible representation $\sigma$ of $M(F)$, $\nu \in \mathfrak{a}^{*}_{M, \CC}$ and $\tilde{w}$, in Section \ref{def of PM}, we define the Plancherel measure as a non-zero complex number $\mu_{M}(\nu, \sigma, w)$ such that
\begin{equation*} \label{def in intro}
A(\nu, \sigma, w) A(w (\nu), w(\sigma), w^{-1})= \mu_{M}(\nu, \sigma, w)^{-1} 
\gamma _{w}(G|P)^{2}.
\end{equation*}
\noindent Let $w'$ and $\mathfrak{a}^{*}_{M', \CC}$ have the corresponding meaning for the $F$-inner form $M'$ (See Section \ref{inner forms} for details). 

\begin{wh} [Working Hypothesis \ref{whyp}] \label{whyp intro}
Let $\sigma'_1$ and $\sigma'_2$ be given in $\Pi_{\tsigma'}(M'(F))$. Then, we have
\[
\mu_{\bM'}(\nu', \sigma'_1, w') = \mu_{\bM'}(\nu', \sigma'_2, w')
\]
for any $\nu' \in \ma^{*}_{\bM', \CC}$.
\end{wh}
When $G'$ is an $F$-inner form of $SL_n$ and $M'$ is any $F$-Levi subgroup of $G'$, this hypothesis is a consequence of Proposition \ref{Compatibility}. It is also proved in \cite{choiy2} for the case when the central character of $\tsigma'$ is unramified and the set $\Pi_{\tsigma'}(M'(F))$ is associated to a tempered and tame regular semi-simple elliptic $L$-parameter. 

Our main result is the following.
\begin{thm} [Theorem \ref{relation bw JL-type and PM}] \label{main thm in intro}
Let $\sigma \in \e(M(F))$ and $\sigma' \in \e(M'(F))$ be under the local JL-type correspondence. Assume that Working Hypothesis \ref{whyp} is valid. Then, we have
\[
\mu_{\bM}(\nu, \sigma, w) = \mu_{\bM'}(\nu, {\sigma}', w')
\]
for any $\nu \in \ma^{*}_{\bM, \CC} \s \ma^{*}_{\bM', \CC}$.
\end{thm}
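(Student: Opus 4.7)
The plan is to set up a chain of equalities connecting $\mu_{\bM}(\nu,\sigma,w)$ to $\mu_{\bM'}(\nu,\sigma',w')$ by passing through the ambient covering Levi subgroups $\tM$ and $\tM'$. Concretely, given the pair $(\sigma,\sigma')$ under the JL-type correspondence, fix $\tsigma \in \e(\tM(F))$ and its JL-image $\tsigma' \in \e(\tM'(F))$ such that $\sigma \in \Pi_{\tsigma}(M(F))$ and $\sigma' \in \Pi_{\tsigma'}(M'(F))$. The target chain is
\[
\mu_{\bM}(\nu,\sigma,w) \;=\; \mu_{\ttM}(\nu,\tsigma,w) \;=\; \mu_{\ttM'}(\nu,\tsigma',w') \;=\; \mu_{\bM'}(\nu,\sigma'',w') \;=\; \mu_{\bM'}(\nu,\sigma',w'),
\]
where $\sigma''$ is whatever constituent of $\tsigma'|_{M'(F)}$ is supplied by the restriction step on the inner form side.

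For the outer equalities I would invoke the Compatibility Proposition (\ref{Compatibility}). Since $M$ sits between $\prod SL_{n_i}$ and $\prod GL_{n_i}$, its Plancherel measure is computed from the same normalized intertwining operators as those of $\tM$: one writes the induced representation $\mathrm{Ind}_{P(F)}^{G(F)}(\tsigma \otimes e^{\langle \nu, H_M(\cdot)\rangle})$ and decomposes along the central torus, observing that the operator factors through the quotient and that the resulting scalar $\mu_M$ is independent of the chosen constituent in $\Pi_{\tsigma}(M(F))$. The same argument, run on the inner form side with $M'$ between $\prod SL_{m_i}(D_{d_i})$ and $\prod GL_{m_i}(D_{d_i})$, yields the third equality, producing \emph{some} constituent $\sigma''$ but not necessarily the prescribed $\sigma'$. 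The final equality is then exactly the content of Working Hypothesis \ref{whyp intro}, which asserts that on the inner-form side $\mu_{\bM'}(\nu,\cdot,w')$ is constant on $\Pi_{\tsigma'}(M'(F))$, so $\sigma'' = \sigma'$ may be replaced without changing the value.

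The middle equality $\mu_{\ttM}(\nu,\tsigma,w) = \mu_{\ttM'}(\nu,\tsigma',w')$ is the transfer of Plancherel measures between $\prod GL_{n_i}(F)$ and $\prod GL_{m_i}(D_{d_i})$ under the classical local Jacquet-Langlands correspondence. For unitary supercuspidal $\tsigma$ (hence essentially square integrable), this follows componentwise from the theorem of Arthur and Clozel \cite{ac89}, once one checks that the embedding of $\ttM$ into $G$ and $\ttM'$ into $G'$ via a compatible pure inner twist identifies the relevant Weyl elements, the root systems in $\ma^{*}_{\bM,\CC}\simeq\ma^{*}_{\bM',\CC}$, and the normalizing $\gamma$-factors. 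A local-to-global argument in the style of Mui\'c--Savin \cite{ms00} can also be used if one prefers to bypass the direct appeal to \cite{ac89}.

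The main obstacle I anticipate is bookkeeping on the inner-twist data: one must check that the representatives $w$ and $w'$, the pure inner twist $\psi: G \to G'$ used to identify $M$ with $M'$, and the isomorphism $\ma^{*}_{\bM,\CC}\simeq\ma^{*}_{\bM',\CC}$ are all compatible so that the equalities above are genuinely between the same complex number on the nose rather than up to unknown Weyl translates. Aside from this, all ingredients are either cited results (Arthur--Clozel \cite{ac89}), the Compatibility Proposition, or the explicitly assumed Working Hypothesis, so once the bookkeeping is in place the theorem follows by concatenating the four equalities.
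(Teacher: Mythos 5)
Your proposed chain fails at its crucial middle link. The first and last equalities in
\[
\mu_{\bM}(\nu,\sigma,w) \;=\; \mu_{\ttM}(\nu,\tsigma,w) \;=\; \mu_{\ttM'}(\nu,\tsigma',w') \;=\; \mu_{\bM'}(\nu,\sigma'',w') \;=\; \mu_{\bM'}(\nu,\sigma',w')
\]
are legitimate applications of Proposition \ref{Compatibility} and Working Hypothesis \ref{whyp}, assuming an ambient group $\tG$ with $\tG_{\der}=G_{\der}$, $G\subseteq\tG$, and $\tM$ a Levi of $\tG$ is available. But the middle equality is not a consequence of Arthur--Clozel. The Plancherel measure $\mu_{\ttM}(\nu,\tsigma,w)$ is computed inside the ambient group $\tG$, and it depends essentially on the root system of $A_{\tM}$ in $\tG$, not merely on the Levi $\tM\cong\Pi GL_{n_i}$. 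What \cite{ac89} proves is a transfer of Plancherel measures when the ambient group is $GL_N$ itself, where the rank-one factors are Rankin--Selberg in nature. In the present setting $\tG$ is typically something like $GSp_{2n}$, $GSpin_{2n}$, or an extended group of type $E_6$ or $E_7$, and the rank-one Plancherel measures then involve symmetric square, exterior square, spinor, and other $L$-functions entirely outside the scope of \cite{ac89}. The reduction you attempt via Proposition \ref{Compatibility} changes the Levi but not the root data in $\tG$, so it does not land in the $GL_N$ world where Arthur--Clozel applies. In other words, the equality $\mu_{\ttM}(\nu,\tsigma,w)=\mu_{\ttM'}(\nu,\tsigma',w')$ is itself the theorem at the $\tM$ level, and you have not proved it.

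The paper's actual proof is the local-to-global argument you relegate to a parenthetical. It globalizes $F$, $G$, $G'$, $M$, $M'$ via Theorem \ref{construction of G and G'}, produces compatible cuspidal automorphic representations $\pi$ of $\M(\AA)$ and $\pi'$ of $\M'(\AA)$ via Henniart's globalization, Hiraga--Saito's lifting, the global Jacquet--Langlands correspondence, and restriction (Proposition \ref{existence pro}), and then exploits the functional equation $M(s,\pi)M(-s,w(\pi))=\mathrm{id}$ of Eisenstein series, yielding global products of local Plancherel measures and unramified factors. After cancelling the unramified places, using the Keys--Shahidi and Arthur formulas in terms of Artin factors at the split places of $V\setminus S$, and invoking Working Hypothesis \ref{whyp} at the places of $S$, one arrives at $\mu_M(s,\sigma,w)^{|S|}=\mu_{M'}(s,\sigma',w')^{|S|}$, from which the conclusion follows by holomorphicity and non-negativity of the Plancherel measure on the unitary axis. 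To repair your proposal, you would need to spell out this global machinery in detail; the appeal to \cite{ac89} cannot substitute for it.
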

\noindent As a corollary to Theorem \ref{main thm in intro}, we obtain the invariance of Plancherel measures on the set $\Pi_{\tsigma}(M(F))$. 

Our approach to prove Theorem \ref{main thm in intro} is a general version of the local to global argument by Mui{\'c} and Savin in \cite{ms00}. {First}, given local data $F$, $G$, $G'$, $M$, $M'$, $\tM$ and $\tM'$ above, the following theorem allows us to construct a number field $\F$, $\F$-groups $\G$, $\G'$, $\M$, $\M'$, $\widetilde{\M}$ and $\widetilde{\M}'$ with prescribed local behavior.
\begin{thm} [Theorem \ref{construction of G and G'}] \label{construction of G and G' intro}
Let $F$ be a $p$-adic field of characteristic $0$, let $G$ be a connected reductive quasi-split $F$-group and let $G'$ be its $F$-inner form. Then, there exist a number field $\F$, a non-empty finite set $S$ of finite places of $\F$, a connected reductive quasi-split $\F$-group $\G$ and its $\F$-inner form $\G'$ such that
\begin{itemize}
 \item[(a)] for all $v \in S$, $\F_v \s F$, $\G_v \s  G$, and $\G'_v \s  G'$ over $\F_v$,
 \item[(b)] for all $v \notin S$ including all the archimedean places, $\G_v \s \G'_v$ over $\F_v$,
\end{itemize}
where $\G_v$ and $\G'_v$ denote $\G \times_{\F} \F_{v}$ and $\G' \times_{\F} \F_{v}$, respectively.
\end{thm}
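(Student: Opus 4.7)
The plan is to globalize in three stages: first the local field $F$, then the quasi-split form $G$, and finally the inner twist $G'$. For the field, I would invoke Krasner's lemma: write $F = \QQ_p(\alpha)$ with minimal polynomial $f \in \QQ_p[x]$, approximate $f$ by a monic $\tilde{f} \in \QQ[x]$ close enough that $\tilde{f}$ has a root in $F$ generating the same extension, and set $\F = \QQ(\tilde{\alpha})$ for such a root $\tilde{\alpha}$. Then $\F$ has a prime $v_0$ with $\F_{v_0} \s F$; by passing to a further suitable extension of $\F$ we may arrange that $\F$ carries several primes $v_0, v_1, \dots$, all with completion isomorphic to $F$, which we reserve for the third stage.

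For the quasi-split form, recall that $G$ is determined by an absolute based root datum $\Psi$ together with a homomorphism $\mu : \Gal(\bar{F}/F) \to \mathrm{Aut}(\Psi)$ factoring through the Galois group of a finite extension $E/F$, cyclic of order in $\{1,2,3,6\}$ in the absolutely simple case and a product of such in general. The task is to realize $E$ globally: find a finite Galois extension $\mathbf{E}/\F$ with the prescribed Galois group, whose completion at $v_0$ is isomorphic to $E/F$, and which is split at every archimedean place of $\F$. This is a standard application of Grunwald--Wang; the exceptional case does not arise since the relevant Galois groups are of small order and we have room to impose further local conditions. Defining $\G$ as the inner twist of the split Chevalley $\F$-form with root datum $\Psi$ by the cocycle obtained from $\mathbf{E}/\F$ composed with $\mu$ gives a quasi-split $\F$-group that is split at all archimedean places and satisfies $\G_v \s G$ for every $v \in S$.

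The main step is constructing $\G'$ as an inner twist of $\G$ realizing the prescribed local inner forms. Since inner forms are classified by $H^{1}(\F, \G_{\mathrm{ad}})$, the task reduces to finding a class $c \in H^{1}(\F, \G_{\mathrm{ad}})$ whose localization at each $v \in S$ is the class defining $G'$ and whose localization at every $v \notin S$ is trivial. The image of the localization map $H^{1}(\F, \G_{\mathrm{ad}}) \to \bigoplus_{v} H^{1}(\F_v, \G_{\mathrm{ad}})$ is governed by the Borel--Serre--Kottwitz local-global principle, whose only obstruction is a sum-of-local-invariants condition. I would use Borel--Harder's surjectivity onto archimedean cohomology (combined with the split-at-infinity arrangement from Stage~2) to kill the archimedean components, and weak approximation for $H^{1}$ of linear algebraic groups to install the correct class at $v_0$. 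The main obstacle is the final balancing: any residual non-triviality at finite places outside $\{v_0\}$ must be absorbed by enlarging $S$ to include the additional primes $v_1, v_2, \dots$ furnished in Stage~1, at each of which the inner form is forced to be (isomorphic to) $G'$. The freedom to take $S$ to be an arbitrary non-empty finite set, rather than a singleton, is precisely what makes this cancellation of global invariants possible.
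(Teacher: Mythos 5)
Your proposal follows essentially the same strategy as the paper: globalize the local field so that it appears as the completion at several places, pass to a global group, and then use the local–global exact sequence for $H^1(\cdot, \G^{\ad})$ whose cokernel is $A(\G^{\ad})$ to realize the inner twist, exploiting the freedom to enlarge $S$ so that the obstruction in $A(\G^{\ad})$ vanishes. The one place where the paper is sharper is the balancing step: rather than installing the class at a single $v_0$ and ``absorbing residual non-triviality'' (which, as phrased, suggests the lifted global class might force uncontrolled local components at other places — it doesn't, and you don't get to control what those would be), the paper prescribes the \emph{same} local class $\varphi_{\tau_v}$ at each of $|S|$ places with completion $F$ and the trivial class everywhere else, then observes that choosing $|S|$ to be a multiple of $|A(\G^{\ad})|$ makes $\beta_\G(a_\tau)=|S|\cdot\varphi_{\tau_v}=1$ automatically, so exactness of the Kottwitz sequence gives the lift with no need for Borel--Harder at infinity or weak approximation. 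Your Stage~2 (Grunwald--Wang to realize the splitting field of the quasi-split form globally, split at infinity) is actually more detailed than the paper, which treats this step tersely; it is the right justification in the general quasi-split case, though the paper's own applications are to $F$-split $G$ where this step is trivial.
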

\noindent This theorem is proved by local and global cohomological results in \cite{kot86, pr94}.

{Second}, we find a finite set $V \supseteq S$ and two cuspidal automorphic representations as described in the following Proposition.
\begin{pro}[Proposition \ref{existence pro}] \label{existence pro intro}
Let $\AA$ be the ring of adeles of $\F$. Suppose $\sigma \in \e(M(F))$ and $\sigma' \in \e(M'(F))$ are under the local JL-type correspondence. Then, there exist a finite set $V$ of places of $\F$ containing $S$ and all archimedean places, and two cuspidal automorphic representations $\pi = \otimes_v \pi_v$ of $\M(\AA)$ and $\pi' = \otimes_v \pi'_v$ of $\M'(\AA)$ such that
\begin{itemize}
  \item[(a)] for all $v \in S$, $\pi_v \s \sigma$ and $\pi'_v \in \Pi_{\tsigma'}(M'(F))$,
  \item[(b)] for all  $v \in V - S$, $\pi_v$ and $\pi'_v$ are irreducible constituents of the restriction of an irreducible representation of $\wM(\F_v)$ to $\M(\F_v)$ (note that, for $v \notin S$, $\M_v \s \M'_v$ and $\widetilde{\M}_v \s \widetilde{\M}'_v$ over $\F_v$),
  \item[(c)] for all $v \notin V$, $\pi_v$ and $\pi'_v$ are isomorphic and unramified with respect to $\M(\OO_v)$. Here $\OO_v$ is the ring of integers of $\F_v$.
\end{itemize}
\end{pro}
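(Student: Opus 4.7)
The plan follows the local-to-global strategy used by Mui{\'c}--Savin, adapted to the product-of-$GL$ setting. I will first globalize the supercuspidal $\tsigma$ to a cuspidal automorphic representation of $\wM(\AA)$; then transfer it to $\wM'(\AA)$ via the global Jacquet--Langlands correspondence; and finally descend to $\M$ and $\M'$ by restriction.

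\textit{Step 1 (Globalization on $\wM$).} Since $\wM \s \prod_{i=1}^r GL_{n_i}$, a globalization theorem (e.g., Henniart's, or the Poincar\'e series / simple trace formula argument employed in Arthur--Clozel and Mui{\'c}--Savin) applied factor-by-factor produces a cuspidal automorphic representation $\tilde\pi = \otimes_v \tilde\pi_v$ of $\wM(\AA)$ such that $\tilde\pi_v \s \tsigma$ at each $v \in S$ and such that $\tilde\pi_v$ is unramified outside a finite set $V \supseteq S$, which I enlarge to contain all archimedean places and all places of ramification. Supercuspidality of $\tsigma$ at the places in $S$ forces global cuspidality.

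\textit{Step 2 (Transfer to $\wM'$).} I then apply the global Jacquet--Langlands correspondence between $\wM$ and $\wM'$, which is a product of the classical global JL correspondences for $GL_n$ and its inner forms (due to Deligne--Kazhdan--Vign\'eras and Badulescu), to obtain a cuspidal automorphic representation $\tilde\pi' = \otimes_v \tilde\pi'_v$ of $\wM'(\AA)$. Local-global compatibility gives $\tilde\pi'_v \s \tsigma'$ at $v \in S$, where $\tsigma'$ is the local JL transfer of $\tsigma$. At every $v \notin S$ we have $\wM_v \s \wM'_v$ with local JL reducing to the identity, so $\tilde\pi'_v \s \tilde\pi_v$; in particular $\tilde\pi'$ is unramified outside $V$.

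\textit{Step 3 (Descent to $\M$ and $\M'$).} The restrictions $\tilde\pi|_{\M(\AA)}$ and $\tilde\pi'|_{\M'(\AA)}$ decompose as finite direct sums of irreducible cuspidal automorphic representations. I pick cuspidal constituents $\pi \subseteq \tilde\pi|_{\M(\AA)}$ and $\pi' \subseteq \tilde\pi'|_{\M'(\AA)}$ whose local components at each $v \in S$ are, respectively, $\sigma$ and some element of $\Pi_{\tsigma'}(M'(F))$. Conditions (b) and (c) are then automatic: at $v \in V-S$, both $\pi_v$ and $\pi'_v$ are constituents of $\tilde\pi_v|_{\M(\F_v)}$ (recall $\M_v \s \M'_v$ and $\tilde\pi_v \s \tilde\pi'_v$ there); and at $v \notin V$ the same identification together with unramifiedness allows me to arrange $\pi_v \s \pi'_v$. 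The hardest part of the plan is this last step: one must guarantee a global cuspidal constituent of $\tilde\pi|_{\M(\AA)}$ with the prescribed local component $\sigma$ at each $v \in S$. This requires a realizability/multiplicity statement for the restriction of a cuspidal automorphic representation from $\wM$ to $\M$, accessible through Hiraga--Saito and Labesse--Langlands style analysis together with a description of the character group of $\wM(\AA)/\M(\AA)$ permuting the global packet. Step 1 is also a nontrivial globalization input but is available in the literature in the $GL_n$ case.
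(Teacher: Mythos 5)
Your plan differs from the paper's in a way that matters. You globalize $\tsigma$ on $\wM$ first (a product of $GL_n$'s, where globalization is classical) and then try to descend to $\M$; as you yourself flag, this runs into a genuine obstruction at Step~3: there is no off-the-shelf realizability statement that guarantees a cuspidal constituent $\pi \subseteq \tilde\pi|_{\M(\AA)}$ whose local component at \emph{every} $v \in S$ is exactly the prescribed $\sigma$, and supplying such a statement (via a global multiplicity analysis in the Labesse--Langlands/Hiraga--Saito style) would be a substantial additional argument. The paper reverses the order of operations precisely to sidestep this. It invokes Henniart's globalization theorem (Proposition~\ref{he84}), which applies to an \emph{arbitrary} connected reductive group, to produce a cuspidal automorphic $\pi$ of $\M(\AA)$ with $\pi_v \s \sigma$ at each $v \in S$ built in. It then \emph{lifts up} from $\M$ to $\wM$ using the Hiraga--Saito result (Remark~\ref{local rest}), obtaining a cuspidal $\tilde\pi$ of $\wM(\AA)$ restricting to contain $\pi$; at $v \in S$ this forces $\tilde\pi_v \s \tsigma \otimes (\eta_v \circ \det)$ for a quasi-character $\eta_v$ (Remark~\ref{remark for lifting}). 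From there Steps~2 and part of~3 proceed much as you propose: $D$-compatibility gives a cuspidal $\tilde\pi'$ via global Jacquet--Langlands with $\tilde\pi'_v \s \tsigma' \otimes (\eta_v \circ \Nrd)$ at $v \in S$ (Remark~\ref{torsion by character}), and restricting to $\M'(\AA)$ yields $\pi'$ with $\pi'_v$ only known to lie in $\Pi_{\tsigma'}(M'(F))$.

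This asymmetry — exact match $\pi_v \s \sigma$ on the $\M$ side versus mere packet membership $\pi'_v \in \Pi_{\tsigma'}(M'(F))$ on the $\M'$ side — is not an accident of the proof; it is exactly what the statement (a) asserts and exactly why Working Hypothesis~\ref{whyp} is introduced in the paper. Your plan, by contrast, implicitly aims at controlling the constituent on \emph{both} sides, which is stronger than what is needed and harder than what the tools deliver. In short: replace your Step~1 by globalizing $\sigma$ on $\M$ directly via Henniart, and replace the descent from $\wM$ to $\M$ by the lift from $\M$ to $\wM$ via Hiraga--Saito; then the multiplicity difficulty you identified disappears, and the rest of your Steps~2--3 (global JL on the $\wM$-level, restriction to $\M'$ using Proposition~\ref{restriction of global rep} and Remark~\ref{restriction of cuspidal rep}, enlargement of $V$ to where things become unramified) go through essentially as you describe.
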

\noindent We prove Proposition \ref{existence pro intro} by using the result of Henniart \cite{he84}, the global Jacquet-Langlands correspondence \cite{ba08} and the result of Hiraga and Saito \cite{hs11}.

{Next}, we obtain equalities (\ref{identity of intertwining operators of M}) and (\ref{identity of intertwining operators of M'}) in Section \ref{main thms}, which consist of a product of Plancherel measures at places in $V$ and a quotient of the local Langlands $L$-functions for unramified representations. The property (c) of Proposition \ref{existence pro intro} allows us to cancel all local factors appearing outside $V$. For the places in $V - S$, we use the results of Keys and Shahidi in \cite{keysh} and of Arthur \cite{arthur89f}. We thus retain only Plancherel measures appearing inside $S$ as in equation (\ref{equality equality of two pm}). The hypothesis in Theorem \ref{main thm in intro} is needed here to identify all the Plancherel measures attached to $\pi'_v \in \Pi_{\tsigma'}(M'(F))$ for $v \in S$. Finally, from the fact that Plancherel measures are holomorphic and non-negative on the unitary axis, we deduce Theorem \ref{main thm in intro}.

We remark that, our construction in Theorem \ref{construction of G and G' intro} can be applied to any $F$-Levi subgroup $M$ of any connected reductive group over a $p$-adic field of characteristic $0$, including the case when $M \s GL_n$ in \cite{ms00} (cf. Remark \ref{brauer}). Further, Proposition \ref{existence pro intro} extends the global Jacquet-Langlands correspondence for $GL_n$ to a connected reductive $F$-split group $M$ satisfying condition (\ref{cond on M}). 

As applications of Theorem \ref{main thm in intro}, we transfer the reducibility of the induced representations and the edges of complementary series from unitary supercuspidal representations of maximal $F$-Levi subgroups under the local JL-type correspondence. We also prove that the reducibility and the edges are invariant on the set $\Pi_{\tsigma}(M(F))$. Further, our work can be applied to a simply connected simple $F$-group of type $E_6$ or $E_7$, and a connected reductive $F$-group of type $A_{n}$, $B_{n}$, $C_n$ or $D_n$.

This paper is organized as follows. In Section \ref{prelim}, we recall basic notions, terminologies and known results. After reviewing the local and global Jacquet-Langlands correspondences in Section \ref{subs local JL}, we define the local JL-type correspondence in Section \ref{type}. In Section \ref{local to global}, we set up a generalized local to global argument. We prove in Section \ref{main thms} that Plancherel measures are preserved under the local JL-type correspondence, then we present some applications in Section \ref{ex and appl}. In Section \ref{Further Work}, we generalize the result in Section \ref{main thms} to any Levi subgroup under some assumptions. Appendix A gives a few examples of an $F$-Levi subgroup $M$ and its $F$-inner form satisfying condition (\ref{cond on M}).

\textbf{Acknowledgements.}
We are deeply indebted to Professor Freydoon Shahidi for his constant support and profound insights. We are also grateful to Professor Gordan Savin for his valuable advice and corrections to the manuscript. Thanks are due to Professor Dipendra Prasad for his helpful correspondences and comments on a local to global argument.
This work is a part of the author's Ph.D. dissertation at Purdue University.
\section{Preliminaries} \label{prelim}
We recall basic notions, terminologies and known results. We mainly refer to \cite{hai10, hc73, kot86, sh90, shin10, wal03}.
\subsection{Notation and Conventions}
Throughout this paper, $F$ denotes a $p$-adic field of characteristic $0$ and $\F$ denotes a number field unless otherwise stated. Fix algebraic closures $\bar{F}$ and $\bar{\F}$. We shall use the ordinary capital letters $G$, $M$, etc. for groups {defined over a local field} and the boldfaced capital letters $\G$, $\M$, etc. for groups {defined over a global field}.

By abuse of terminology, we identify the set of isomorphism classes with the set of representatives. Given a connected reductive group $G$ over $F$, we use the following notation: $\Irr(G(F))$ denotes the set of isomorphism classes of admissible representations of $G(F)$; $\esq(G(F))$ denotes the set of essentially square-integrable representations in $\Irr(G(F))$; $\Irr_u(G(F))$ denotes the set of unitary representations in $\Irr(G(F))$; $\e(G(F))$ denotes the set of unitary supercuspidal representations in $\Irr_u(G(F))$; and $\ed(G(F))$ denotes the set of discrete series (square-integrable) representations in $\Irr_u(G(F))$.

Denote by $D$ a central division algebra over $F$. We often write $D_d$ to emphasize its dimension $d^2$ over $F$. We denote by $GL_{m}(D)$ the group of all invertible elements of $m \times m$ matrices over $D$, and by $SL_{m}(D)$ the subgroup of elements in $GL_{m}(D)$ whose reduced norm is $1$ (see \cite[Sections 1.4 and 2.3]{pr94}). 
\subsection{Plancherel Measures} \label{def of PM}
Let $G$ be a connected reductive group over a $p$-adic field $F$ of characteristic $0$. Fix a minimal $F$-parabolic subgroup $P_0$ of $G$ with Levi component $M_0$ and unipotent radical $N_0$. Let $A_0$ be the split component of $M_0$, that is, the maximal $F$-split torus in the center of $M_0$. Let $\Delta$ be the set of simple roots of $A_0$ in $N_0$.

Let $P$ be a standard (that is, containing $P_0$) $F$-parabolic subgroup of $G$. Write $P=MN$ with its Levi component $M=M_{\theta} \supseteq M_0$ generated by a subset $\theta \subseteq \Delta$ and its unipotent radical $N \subseteq N_0$. Let $A_M$ be the split component of $M$. Denote by $W_M = W(G, A_M) := N_G(A_M) / Z_G(A_M)$ the Weyl group of $A_M$ in $G$, where $N_G(A_M)$ and $Z_G(A_M)$ are respectively the normalizer and the centralizer of $A_M$ in $G$. For convenience, we write $A_0 = A_{M_0}$ and $W_G = W_{M_0}$.

Denote by $X^{*}(M)_F$ the group of $F$-rational characters of $M$. We denote by $\ma_M := \Hom(X^{*}(M)_F ,\RR) = \Hom(X^{*}(A_M)_F , \RR)$ the real Lie algebra of $A_{M}$. We set the complex dual 
\[
\mathfrak{a}^{*}_{M, \CC} := X^{*}(M)_F \otimes_{\ZZ} \CC.
\]
We define the homomorphism $H_{M} : M(F) \rightarrow \mathfrak{a}_{M}$ by 
\[
q ^{\langle  \chi, H_{M}(m) \rangle} = |\chi(m)|_F
\]
for all $\chi \in X^{*}(M)_F$ and $m \in M(F)$. 
Here $|\cdot|_F$ denotes the normalized absolute value on $F$.
Note that one can extend $H_M$ to $G(F)$ using the Iwasawa decomposition.

For $\sigma \in \Irr(M(F))$ and $\nu \in \ma^{*}_{M, \CC}$, we denote by $I(\nu, \sigma)$ the normalized induced representation 
\begin{equation*} \label{ind}
I(\nu, \sigma) = {Ind}_{P(F)} ^{G(F)} (\sigma \otimes q ^{\langle  \nu, H_{M}(\,) \rangle} \otimes \mathbbm{1}).
\end{equation*}
Here $\mathbbm{1}$ is the trivial representation of $N(F)$.
The space $V(\nu, \sigma)$ of $I(\nu, \sigma)$ consists of locally constant functions $f$ from $G(F)$ into the representation space $\mathcal{H}(\sigma)$ of $\sigma$ such that 
\[
f(mng) = \sigma(m) q ^{\langle  \nu + \rho_P, H_{M}(m) \rangle} f(g),
\]
for $m \in M(F)$, $n \in N(F)$ and $g \in G(F)$. Here $\rho_P$ denotes the half sum of all positive roots in $N$. We often write $\ii_{G,M}(\nu, \sigma)$ for $I(\nu, \sigma)$ in order to specify groups.

We fix a representative $w \in G(F)$ of $\tilde{w} \in W_G$ such that $\tilde{w}(\theta) \subseteq \Delta$. Set $N_{\tilde{w}}:=N_0 \cap wN^{-}w^{-1}$. We fix a Haar measure $dn$ on $N_{\tilde{w}}$. Given $f \in V(\nu, \sigma)$, for $g \in G(F)$, the standard intertwining operator is defined as
\begin{equation*} \label{intertwining}
A(\nu, \sigma, \tilde{w}) f(g) = \int_{N_{\tilde{w}}(F)} f(w^{-1} n g) dn.
\end{equation*}
We set
\begin{equation} \label{hc gamma ft}
\gamma _{\tilde{w}}(G|M) = \int_{\bar{N}_{\tilde{w}}(F)} q ^{\langle 2 \rho_P, H_{M}(\bar{n}) \rangle} d\bar{n},
\end{equation}
where $d\bar{n}$ is a normalized Haar measure on $\bar{N}_{\tilde{w}}(F)$ and $\bar{N}_{\tilde{w}}:=w^{-1}N_{\tilde{w}}w=N^- \cap w^{-1}N_0 w$.

\begin{defn} \label{definition of PM}
Given $\nu \in \ma^{*}_{M, \CC}$, $\sigma \in \Irr_u(M(F))$ and $\tilde{w}(\theta) \subseteq \Delta$, we define \textit{the Plancherel measure} attached to $\nu$, $\sigma$ and $\tilde{w}$ as a non-zero complex number $\mu_{M}(\nu, \sigma, w)$ such that
\[
A(\nu, \sigma, w) A(w (\nu), w(\sigma), w^{-1})= \mu_{M}(\nu, \sigma, w)^{-1} 
\gamma _{w}(G|P)^{2}.
\]
\end{defn}
\begin{rem} \label{useful rmks on pm}
The Plancherel $\mu_{M}(\nu, \sigma, w)$ depends only on $\nu$, $\sigma$ and $\tilde{w}$. It is independent of the choices of any Haar measure and of any representative $w$ of $\tilde{w}$ \cite[p.280]{sh90}. Further, as a function $\nu \mapsto \mu_{M}(\nu, \sigma, w)$ on $\mathfrak{a}^{*}_{M, \CC}$, it extends to a meromorphic function on all of $\mathfrak{a}^{*}_{M, \CC}$. Moreover, it is non-negative and holomorphic on $\sqrt{-1}\ma^{*}_{M}$ \cite[Theorem 20]{hc73}.
\end{rem}

We review two useful properties of the Plancherel measure. Let $\Phi(P,A_M)$ denote the set of reduced roots of $P$ with respect to $A_M$. For $\alpha \in \Phi(P, A_M)$, $A_{\alpha}:=(\ker \alpha \cap A_M)^{\circ}$ denotes the identity component of $(\ker \alpha \cap A_M)$ regarding $\alpha$ as an element in $\ma^{*}_{M}$. Set $M_{\alpha}:=Z_G(A_{\alpha})$ and $P_{\alpha}:= M_{\alpha} \cap P$. Note that $M_{\alpha}$ contains $M$ and $P_{\alpha}$ is a maximal $F$-parabolic subgroup of $M_{\alpha}$ with its Levi decomposition $MN_{\alpha}$ and the split component $A_M$. The Plancherel measure $\mu_{\alpha}(\nu, \sigma, w)$ and the function $\gamma_{\alpha}(P_{\alpha}|M)$ can be defined by replacing $G$ with $P_{\alpha}$ in Definition \ref{definition of PM} and equation (\ref{hc gamma ft}), respectively. The following is the product formula of the rank-one Plancherel measures \cite[Theorem 24]{hc73}. 
\begin{pro}[Product Formula] \label{prod formula}
\[
\gamma_{\tilde{w}}(G|M)^{-2}\mu_{M}(\nu, \sigma, w) 
= \dprod_{\alpha \in \Phi(P,A_M)} \gamma_{\alpha}(P_{\alpha}|M)^{-2} \mu_{\alpha}(\nu, \sigma, w).
\]
\end{pro}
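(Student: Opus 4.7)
The plan is to factor the standard intertwining operator along a reduced expression for $\tilde{w}$ and reassemble the resulting rank-one contributions by root, following the classical route due to Harish-Chandra.

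First, I would fix a reduced expression $\tilde{w} = s_{\beta_1} s_{\beta_2} \cdots s_{\beta_k}$ as a product of elementary Weyl reflections with respect to $M$, so that the indexing data $\{\beta_i\}$ are in bijection with the positive reduced roots $\alpha \in \Phi(P, A_M)$ satisfying $\tilde{w}(\alpha) < 0$. The unipotent group $N_{\tilde{w}}$ admits a compatible decomposition as an iterated product of root subgroups indexed by these $\beta_i$. Using this decomposition together with Fubini and the normalization of the Haar measures recalled in Section~\ref{def of PM}, the gamma function factors as
\[
\gamma_{\tilde{w}}(G|M) = \prod_{i=1}^{k} \gamma_{\beta_i}(P_{\beta_i}|M),
\]
each factor being the rank-one gamma function attached to the maximal parabolic $P_{\beta_i}$ of $M_{\beta_i}$.

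Second, the cocycle property of the standard intertwining operator yields the analogous factorization
\[
A(\nu, \sigma, w) = \prod_{i=1}^{k} A(\nu_i, \sigma_i, w_{\beta_i}),
\]
where $w_{\beta_i}$ lifts $s_{\beta_i}$, each $A(\nu_i, \sigma_i, w_{\beta_i})$ is the rank-one intertwining operator inside $M_{\beta_i}$, and $(\nu_i, \sigma_i)$ is the twist of $(\nu, \sigma)$ by the tail $s_{\beta_{i+1}} \cdots s_{\beta_k}$. Composing with the analogous factorization of $A(w(\nu), w(\sigma), w^{-1})$ written with the reversed reduced expression, and invoking Definition~\ref{definition of PM} both for $\mu_M$ and for each rank-one $\mu_{\beta_i}$, one obtains
\[
\mu_M(\nu, \sigma, w)^{-1} \gamma_{\tilde{w}}(G|M)^{2} = \prod_{i=1}^{k} \mu_{\beta_i}(\nu_i, \sigma_i, w_{\beta_i})^{-1} \gamma_{\beta_i}(P_{\beta_i}|M)^{2}.
\]
To finish, I would reindex each rank-one factor on the right by the root $\alpha = s_{\beta_k} \cdots s_{\beta_{i+1}}(\beta_i) \in \Phi(P, A_M)$ it detects, appealing to the Weyl-covariance of the rank-one Plancherel measure and its independence from the choice of representative (Remark~\ref{useful rmks on pm}). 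For the complementary roots $\alpha$ with $\tilde{w}(\alpha) > 0$ the rank-one intertwining operator is the identity, so $\mu_\alpha$ and $\gamma_\alpha$ reduce to $1$ and the product extends harmlessly over all of $\Phi(P, A_M)$, yielding the desired formula.

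The main obstacle I expect is the bookkeeping of the twists along the reduced word: one must verify that the rank-one Plancherel measure computed inside the conjugated Levi $M_{\beta_i}$ for the twisted data $(\nu_i, \sigma_i)$ agrees with the intrinsic $\mu_\alpha(\nu, \sigma, w)$ attached to the corresponding root $\alpha$. This reduces to Weyl-equivariance of the rank-one Plancherel measure, which itself follows from its defining identity in Definition~\ref{definition of PM} together with the independence from the choice of representative lift recorded in Remark~\ref{useful rmks on pm}.
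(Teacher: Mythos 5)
The paper gives no proof of this proposition; it cites Harish-Chandra \cite[Theorem 24]{hc73} directly. Your approach — factor the intertwining operator and the $\gamma$-factor along a reduced expression of $\tilde{w}$, pair off rank-one contributions, and invoke Weyl-covariance to reindex by roots — is essentially Harish-Chandra's own argument, and the first three paragraphs are sound in outline.

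The final ``extension'' step, however, contains a genuine error. You claim that for roots $\alpha \in \Phi(P,A_M)$ with $\tilde{w}(\alpha) > 0$ the rank-one intertwining operator is the identity, so $\mu_\alpha$ and $\gamma_\alpha(P_\alpha|M)$ reduce to $1$ and the product may be harmlessly extended over all of $\Phi(P,A_M)$. This is false. The quantities $\mu_\alpha(\nu,\sigma)$ and $\gamma_\alpha(P_\alpha|M)$ are \emph{intrinsic} invariants of the maximal parabolic $P_\alpha \subset M_\alpha$, computed using the nontrivial element of the rank-one Weyl group $W(M_\alpha, A_M)$; they do not depend on the external element $\tilde{w}$, and they are not equal to $1$ merely because $\tilde{w}$ happens not to flip $\alpha$. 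If $\tilde{w}(\alpha) > 0$ the root $\alpha$ is simply absent from the factorization, and its rank-one Plancherel measure is a nontrivial factor you cannot insert.

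The resolution is that the product formula, as in Harish-Chandra's Theorem~24, is really a statement for the element $\tilde{w}$ that carries $P$ to the opposite parabolic $\bar{P}$, so that $\tilde{w}(\alpha) < 0$ for \emph{every} $\alpha \in \Phi(P,A_M)$. In that case a reduced expression for $\tilde{w}$ already detects each reduced root of $\Phi(P,A_M)$ exactly once, the product coming out of your cocycle factorization is already a product over all of $\Phi(P,A_M)$, and no extension is needed. For a $\tilde{w}$ that does not flip every root, the correct identity would restrict the product on the right to $\{\alpha \in \Phi(P,A_M) : \tilde{w}(\alpha) < 0\}$, and the formula as stated would fail. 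Once this is made precise, your factorization argument (cocycle relation for $A$, Fubini for $\gamma_{\tilde{w}}$, and the Weyl-equivariance check you rightly flag as the main bookkeeping obstacle) does yield the proposition.
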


Next, suppose that $\widetilde{G}$ is a connected reductive group over $F$ such that
\[
G_{\der} = \tG_{\der} \subseteq G \subseteq \widetilde{G},
\]
where the subscript $_{\der}$ means the derived group. Let $\widetilde{M}$ denote an $F$-Levi subgroup of $\widetilde{G}$ such that $M = \widetilde{M} \cap G$. Given $\sigma \in \Irr(M(F))$, we have an irreducible representation $\widetilde{\sigma} \in \Irr(\widetilde{M}(F))$ whose restriction $\widetilde{\sigma}|_{M(F)}$ contains $\sigma$ (see \cite[Proposition 2.2]{tad92}). Since $A_M \subseteq A_{\widetilde{M}}$, we have a canonical surjective homomorphism
$
\ma^{*}_{\widetilde{M}, \CC} \twoheadrightarrow \ma^{*}_{M, \CC}.
$
We write $\widetilde{\nu}$ for any pre-image in $\ma^{*}_{\widetilde{M}, \CC}$ of $\nu \in \ma^{*}_{M, \CC}$.
The following is the compatibility of the Plancherel measure with the restriction.
\begin{pro}[Compatibility with Restriction] \label{Compatibility}
\[
\mu_{M}(\nu, \sigma, w) = 
\mu_{\widetilde{M}}(\widetilde{\nu}, \widetilde{\sigma}, w).
\]
\end{pro}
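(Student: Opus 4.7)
The strategy is to transfer the intertwining-operator identity that defines $\mu_{\widetilde{M}}$ down to $G$ via a natural comparison map between the two induced representations. The key structural facts are that the unipotent radicals, the Weyl representative, and the Iwasawa data are unchanged because they live in the common derived group $G_{\der}=\widetilde{G}_{\der}$, and that $\widetilde{\sigma}|_{M(F)}$ decomposes as a finite direct sum containing $\sigma$ by \cite[Proposition 2.2]{tad92}, supplying a nonzero $M(F)$-equivariant projection $\mathrm{pr}\colon\mathcal{H}(\widetilde{\sigma})\twoheadrightarrow\mathcal{H}(\sigma)$ that is independent of $\nu$.

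First I would make these reductions precise. Since $N\subseteq G_{\der}=\widetilde{G}_{\der}$, the subgroup $\widetilde{P}:=\widetilde{M}N$ is an $F$-parabolic of $\widetilde{G}$ with the same unipotent radical $N$, so $N_w$ and $\bar N_w$ agree whether computed in $G$ or in $\widetilde{G}$, and a common Haar measure may be fixed. Choosing a maximal compact $\widetilde{K}\subseteq\widetilde{G}(F)$ with $K:=\widetilde{K}\cap G(F)$ maximal compact in $G(F)$, the Iwasawa decomposition of any $\bar n\in\bar N_w(F)$ in $G$ and in $\widetilde{G}$ yields the same Levi component $m\in M(F)\subseteq\widetilde{M}(F)$. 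Because the surjection $\ma^{*}_{\widetilde{M},\CC}\twoheadrightarrow\ma^{*}_{M,\CC}$ arises from restriction $X^{*}(\widetilde{M})_F\to X^{*}(M)_F$, this yields
\[
q^{\langle\widetilde{\nu},H_{\widetilde{M}}(m)\rangle}=q^{\langle\nu,H_M(m)\rangle},\qquad m\in M(F),
\]
and likewise $\rho_{\widetilde{P}}$ restricts to $\rho_P$ on $M(F)$, both being the half-sum of roots of $A_M$ on $\lie N$. Substituting into (\ref{hc gamma ft}) gives $\gamma_w(\widetilde{G}|\widetilde{P})=\gamma_w(G|P)$.

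Next I would introduce the map $T\colon V(\widetilde{\nu},\widetilde{\sigma})\to V(\nu,\sigma)$ defined by $T(\widetilde{f})(g):=\mathrm{pr}(\widetilde{f}(g))$ for $g\in G(F)$. The character identities above together with the $M(F)$-equivariance of $\mathrm{pr}$ show that $T(\widetilde{f})$ transforms under $P(F)$ by the character defining $V(\nu,\sigma)$, so $T$ is a well-defined, nonzero, $G(F)$-equivariant map. Because the intertwining integrals run over the same group with the same Haar measure and $\mathrm{pr}$ passes through the integral, one checks directly that
\[
T\circ A(\widetilde{\nu},\widetilde{\sigma},w)=A(\nu,\sigma,w)\circ T,
\]
and similarly with $w$ replaced by $w^{-1}$ and $(\widetilde{\nu},\widetilde{\sigma})$ replaced by $(w\widetilde{\nu},w\widetilde{\sigma})$. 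Composing the two identities and applying Definition \ref{definition of PM} on each side produces
\[
\mu_{\widetilde{M}}(\widetilde{\nu},\widetilde{\sigma},w)^{-1}\,\gamma_w(\widetilde{G}|\widetilde{P})^{2}\cdot T \;=\; \mu_M(\nu,\sigma,w)^{-1}\,\gamma_w(G|P)^{2}\cdot T.
\]
Since $T\neq 0$ and $\gamma_w(\widetilde{G}|\widetilde{P})=\gamma_w(G|P)$ by the first step, the desired equality $\mu_M(\nu,\sigma,w)=\mu_{\widetilde{M}}(\widetilde{\nu},\widetilde{\sigma},w)$ drops out.

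The main obstacle I expect is the meromorphic book-keeping: the manipulations above are literally valid only where all of the intertwining operators involved are holomorphic, i.e.\ on an open dense subset of $\ma^{*}_{M,\CC}$. However, both sides of the desired identity are meromorphic in $\nu$ by Remark \ref{useful rmks on pm}, so agreement on a dense set forces agreement everywhere. A secondary subtlety is that reading the composed intertwining operator as a scalar presupposes the usual unitary-inducing-data framework of Harish-Chandra; this is precisely the setting in which $\mu_M$ is defined, and it transfers automatically from $\widetilde{\sigma}$ to its $M(F)$-constituent $\sigma$, so no further work is needed there.
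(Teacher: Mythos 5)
Your argument is correct and follows essentially the same route as the paper. The paper's proof is two lines: it observes $\gamma_{\tilde w}(G|M)=\gamma_{\tilde w}(\widetilde G|\widetilde M)$ from $G_{\der}=\widetilde G_{\der}$, and then invokes Shahidi (p.~293 of \cite{sh89}) for the compatibility $A(\nu,\sigma,\tilde w)=A(\widetilde\nu,\widetilde\sigma,\tilde w)|_{\ii_{G,M}(\nu,\sigma)}$. You unwind that citation explicitly: where Shahidi realizes $\ii_{G,M}(\nu,\sigma)$ \emph{inside} $\ii_{\widetilde G,\widetilde M}(\widetilde\nu,\widetilde\sigma)$ via the inclusion $\mathcal H(\sigma)\hookrightarrow\mathcal H(\widetilde\sigma)$ and the factorization $\widetilde G(F)=\widetilde M(F)\,G(F)$, you instead build the adjoint surjection $T$ from the $M(F)$-equivariant projection $\mathrm{pr}\colon\mathcal H(\widetilde\sigma)\twoheadrightarrow\mathcal H(\sigma)$ and restriction of functions. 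The two are dual packagings of the same comparison; yours has the small advantage of making visible exactly which structural identities ($N_{\tilde w}$ and $\bar N_{\tilde w}$ unchanged, $H_{\widetilde M}|_{M(F)}$ compatible with $H_M$, $\rho_{\widetilde P}$ restricting to $\rho_P$) are used, and of handling the scalar-extraction and meromorphic-continuation bookkeeping explicitly rather than by reference. No gap.
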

\begin{proof}
Since $G$ and $\widetilde{G}$ have the same derived group, we have 
\[
\gamma_{\tilde{w}}(G|M)^{-2} = \gamma_{\tilde{w}}(\widetilde{G}|\widetilde{M})^{-2}.
\]
Hence, the proposition follows from the following property (cf. \cite[p.293]{sh89}):
\[
A(\nu, \sigma, \tilde{w}) = A(\widetilde{\nu}, \widetilde{\sigma}, \tilde{w}) |_{\ii_{G,M}(\nu, \sigma)}.
\]
\end{proof}
\begin{rem} \label{remark for unitary and max}
Remark \ref{useful rmks on pm}, Propositions \ref{prod formula} and \ref{Compatibility} reduce the study of Plancherel measures to the case of irreducible unitary admissible representations of maximal $F$-Levi subgroups of a semi-simple group $G$.
\end{rem}
\subsection{Inner Forms} \label{inner forms}
Let $G$ and $G'$ be connected reductive groups over a $p$-adic field $F$ of characteristic $0$. We say that $G$ and $G'$ are \textit{$F$-inner forms} with respect to an $\bar{F}$-isomorphism $\varphi: G' \rightarrow G$ if $\varphi \circ \tau(\varphi)^{-1}$ is an inner automorphism ($g \mapsto xgx^{-1}$) defined over $\bar{F}$ for all $\tau \in \Gal (\bar{F} / F)$ (see \cite[p.851]{shin10}). Denote by $Z(G)$ the center of $G$. Set $G^{\ad}:=G / Z(G)$ and $H^i(F, G) := H^i(\Gal (\bar{F} / F), G(\bar{F}))$. We note \cite[p.280]{kot97} that there is a bijection between $H^1(F, G^{\ad})$ and the set of isomorphism classes of $F$-inner forms of $G$, obtained by sending the isomorphism class of a pair ($G'$, $\varphi$) to the class of the $1$-cocycle $\tau \mapsto \varphi \circ \tau(\varphi)^{-1}$. We note that a $\Gal (\bar{F} / F)$-stable $G^{\ad}(\bar{F})$-orbit of $\varphi$ gives the same isomorphism class of a pair ($G'$, $\varphi$). We often omit the reference to $\varphi$ when there is no danger of confusion. We note that this notion holds for any field $F$ with an algebraic separable closure $\bar{F}$.

Suppose that $G$ is \textit{quasi-split} over $F$. Let $G'$ be an $F$-inner form of $G$ with respect to an $\bar{F}$-isomorphism $\varphi : G' \rightarrow G$ ($G'$ can be $G$ itself). Let $P_0$, $M_0$, $N_0$, $A_0$, $P'_0$, $M'_0$, $N'_0$, $A'_0$ and $\Delta'$ be as in Section \ref{def of PM}. We denote by $P'=M' N'$ a standard $F$-parabolic subgroup of $G'$. Then, from \cite[Section 11.2]{hai10} we can choose an element $\varphi_1$ in a $\Gal (\bar{F} / F)$-stable $G^{\ad}(\bar{F})$-orbit of $\varphi$ such that: $\varphi_1(A'_0) \subseteq A_0$; $\varphi_1(P')=P=M N$ is a standard $F$-parabolic subgroup of $G$; and $\varphi_1(M')=M$. 
As we discussed above, we identify ($M'$, $\varphi_1$) with ($M'$, $\varphi$). Hence, $M'$ is the $F$-inner form of quasi-split group $M$ over $F$ with respect to the $\bar{F}$-isomorphism $\varphi :M' \rightarrow M$. In this case, we often say that $M$ and $M'$ are corresponding. In fact, the split components $A_M$ and $A_{M'}$ are isomorphic over $F$ via the $\bar{F}$-isomorphism $\varphi$. We thus have 
$
\ma^{*}_{M, \CC} \s \ma^{*}_{M', \CC}
$ 
as $\CC$-vector spaces and identify $\nu \in \ma^{*}_{M, \CC}$ with $\nu' \in \ma^{*}_{M', \CC}$ through the isomorphism. Denote by $W_M = W(G, A_M)$ and $W_{M'} = W(G', A_{M'})$ the Weyl groups as defined in Section \ref{def of PM}. Let $\tilde{w}' \in W_{G'}$ be given such that $\tilde{w}'(\theta') \subseteq \Delta'$. Then the image $\tilde{w} := \varphi(\tilde{w}')$ in $W_{G}$ satisfies the property that $\tilde{w}(\theta) \subseteq \Delta$ (see \cite[Section 11.2]{hai10}).

We recall the Kottwitz isomorphism. Set $A(G) := \pi_0(Z(\hat{G})^{\Gamma_F})^D$. Here $\hat{G}$ denotes the connected Langlands dual group ($L$-group) of $G$, $Z(\hat{G})$ denotes the center of $\hat{G}$, $\Gamma_F = \Gal(\bar{F}/F)$, $\pi_0( \; \cdot \; )$ denotes the group of connected components, and $( \; \cdot \; )^D$ denotes the Pontryagin dual, that is, $\Hom(\; \cdot \; , \RR/\ZZ)$. We note that $A(G)$ is a finite abelian group when $G=G^{\ad}$. To see this, since the center of a simply-connected semisimple complex Lie group is finite, $Z(\widehat{G^{\ad}}) = Z(\hat{ G}_{\scn})$ turns out to be a finite abelian group. Here $\hat{G}_{\scn}$ denotes the simply connected cover of the derived group of $\hat{ G}$.

\begin{pro} (Kottwitz, \cite[Theorem 1.2]{kot86}) \label{kot iso}
Let $G$ be a connected reductive group over a $p$-adic field $F$ of characteristic $0$. One has the following bijection
\[
H^1(F, G) \s A(G).
\]
\end{pro}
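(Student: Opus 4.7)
The plan is to reduce the statement to a torus calculation, where the isomorphism becomes a form of Tate--Nakayama duality, and then to invoke local class field theory. First, reduce to the case where $G_{\der}$ is simply connected via a $z$-extension
\[
1 \to Z \to G_1 \to G \to 1,
\]
with $Z$ a central induced torus (a finite product of Weil restrictions of $\mathbf{G}_m$) and $(G_1)_{\der}$ simply connected. By Shapiro's lemma together with Hilbert 90, $H^1(F, Z) = 1$, so the long exact sequence in Galois cohomology yields a bijection of pointed sets $H^1(F, G_1) \simeq H^1(F, G)$. On the dual side, one obtains an exact sequence $1 \to \hat{G} \to \hat{G}_1 \to \hat{Z} \to 1$ with $\hat{Z}$ a connected induced torus; a diagram chase then shows that $A(G_1) \to A(G)$ is a bijection. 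Hence one may assume $G_{\der} = G_{\scn}$.

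Second, with $G_{\der} = G_{\scn}$, consider the short exact sequence
\[
1 \to G_{\der} \to G \to D \to 1,
\]
where $D = G/G_{\der}$ is a torus. Kneser's theorem gives $H^1(F, G_{\scn}) = 1$ for a simply connected semisimple group over a $p$-adic field of characteristic $0$, so the long exact sequence collapses to $H^1(F, G) \simeq H^1(F, D)$. On the dual side, because $G_{\der}$ is simply connected, $\hat{G}$ has adjoint derived group and $Z(\hat{G})$ is canonically identified with the dual torus $\hat{D}$; consequently $A(G) = \pi_0(\hat{D}^{\Gamma_F})^D = A(D)$. Thus the proof reduces to establishing $H^1(F, D) \simeq \pi_0(\hat{D}^{\Gamma_F})^D$ for an arbitrary $F$-torus $D$.

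For the torus case, choose a resolution $1 \to D \to T_1 \to T_2 \to 1$ with $T_1, T_2$ induced tori, obtaining a two-step presentation of $H^1(F, D)$ as $\coker(T_1(F) \to T_2(F))$, and dualize to present the component-group side analogously. The required duality then follows from local class field theory (Tate duality for tori), whose basic ingredients are Hilbert 90 for $\mathbf{G}_m$ and the local reciprocity map. The principal obstacle is the first reduction: since $H^1$ at the nonabelian stage is only a pointed set, one must verify that the map induced by the $z$-extension is actually a bijection (not merely a surjection) and that it fits into a commutative square with the dual-side map on component groups, which requires a careful analysis of the twisting by cocycles valued in $Z$. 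Once everything is reduced to the abelian (torus) world, the remaining steps are standard Tate--Nakayama duality.
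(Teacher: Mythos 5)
This proposition is quoted in the paper directly as Kottwitz's theorem (\cite[Theorem 1.2]{kot86}); the paper does not reprove it, so there is no in-paper argument to compare yours against. Your sketch does follow the broad outline of how the result is established in the literature: reduce to $G_{\der}$ simply connected via a $z$-extension, use Kneser's vanishing $H^1(F,G_{\scn})=1$ to pass to the maximal torus quotient, and finish with Tate--Nakayama duality for tori. That much is the right skeleton.

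However, you have misidentified where the real work lies, and the sketch has genuine gaps at both reduction steps. In the $z$-extension step $1\to Z\to G_1\to G\to 1$, the vanishing $H^1(F,Z)=1$ (plus twisting) yields \emph{injectivity} of $H^1(F,G_1)\to H^1(F,G)$, but you also assert the map is onto, and onto does not follow from $H^1(F,Z)=1$: the obstruction to lifting a class of $H^1(F,G)$ lives in $H^2(F,Z)$, which is far from zero when $Z$ is an induced torus over a $p$-adic field ($H^2$ of $\mathrm{Res}_{E/F}\mathbf{G}_m$ is a Brauer group). Surjectivity here is a separate theorem requiring its own argument; your remark that the ``principal obstacle'' is upgrading a surjection to a bijection has the difficulty backwards. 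The second step has the same issue in a sharper form: with $1\to G_{\der}\to G\to D\to 1$ and $G_{\der}$ non-central and non-abelian, the cohomology sequence stops at $H^1(F,D)$, so there is no boundary map into any $H^2$ with which to argue. Kneser plus twisting gives injectivity of $H^1(F,G)\to H^1(F,D)$, but the surjectivity you need is a nontrivial statement about non-abelian $H^2$ (vanishing of gerbes banded by simply connected groups over $p$-adic fields) and is simply asserted, not proved. The dual-side claim that $A(G_1)\to A(G)$ is a bijection ``by a diagram chase'' likewise conceals nontrivial facts about how $Z(\hat G)$ sits inside $Z(\hat G_1)$ and how $\Gamma$-invariants and $\pi_0$ behave under the quotient by $\hat Z$. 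None of these gaps is an unrecoverable error -- the reductions are in fact correct and appear (with full justification) in Kottwitz's paper and in later treatments via Borovoi's abelianized Galois cohomology -- but as written the proof proposal treats as routine precisely the surjectivity assertions that carry the content of the reduction.
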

\noindent We note that if the adjoint group $G^{\ad}$ of $G$ is simply connected (e.g., $G_2$, $F_4$, or $E_8$), there is no non-quasi-split $F$-inner form due to Proposition \ref{kot iso}.
\begin{exa} \label{ex for inner forms of GL}
Let $G$ be either $GL_n$ or $SL_n$ over a $p$-adic field $F$ of characteristic $0$. Then the set of isomorphism classes of $F$-inner forms of $G$ is in bijection with the subgroup $Br(F)_n$ of $n$-torsion elements in the Brauer group $Br(F)$. Indeed, we have
\[
H^1(F, PGL_n) \s A(PGL_n) \s \mu_n(\CC)^D.
\]
By Hilbert's theorem 90, $\mu_n(\CC)^D \s H^2(F, \mu_n) \s \ker(Br(F) \overset{n}{\longrightarrow} Br(F)) = Br(F)_n$.
Here $\mu_n$ is the algebraic group of $n^{th}$ root of unity. 
\end{exa}
\begin{exa} \label{ex for inner forms of GL over number field}
Let $\G$ be either $\mathbf{GL}_n$ or $\mathbf{SL}_n$ over a number field $\F$. Then the set of isomorphism classes of $\F$-inner forms of $\G$ is also in bijection with $Br(\F)_n$. Indeed, since $H^1(\F, \mathbf{GL}_n) = 1$ (in fact, this is true for any perfect field \cite[Lemma 2.2 p.70]{pr94}), we have
\[
H^1(\F, \mathbf{PGL}_n) \hookrightarrow H^2(\F, \mathbf{\mu}_n) \s \mu_n(\CC)^D.
\]
The injection turns out to be surjective due to \cite[Theorem 6.20]{pr94}. 
\end{exa}  
\section{Local and Global Jacquet-Langlands Correspondences for $GL_n$} \label{subs local JL}
In this section, we recall the Jacquet-Langlands correspondence for $GL_n$ over a $p$-adic field $F$ of characteristic $0$ and a number field $\F$. We mainly refer to \cite{ba08}.

The local and global Jacquet-Langlands correspondences were initially found by Jacquet and Langlands \cite{jl} for the case $GL_2$ in any characteristic.
The local generalization to $GL_n$ in zero characteristic was established by Rogawski \cite{rog83} and independently by Deligne, Kazhdan and Vigneras \cite{dkv}. Badulescu completed the proof of the local correspondence for $GL_n$ in positive characteristic \cite{ba02}. On the other hand, the global generalization of the correspondence to $GL_n$ was proved only for a number field by Badulescu \cite{ba08}. For some particular cases in zero characteristic, Flath treated the local and global correspondences for $GL_3$ \cite{flath81}, and Snowden presented a new purely local proof of the local Jacquet-Langlands correspondence for $GL_2$  \cite{snow}.

\subsection{Local Jacquet-Langlands Correspondence} \label{local JL}
Let $G$ be $GL_n$ over a $p$-adic field $F$ of characteristic $0$ and let $G'$ be an $F$-inner form of $G$. Then $G'(F)$ is of the form of $GL_m(D)$, where $D$ denotes a central division algebra of dimension $d^{2}$ over $F$ and $n=md$. 

For semisimple elements $g \in G(F)$ and $g' \in G'(F)$, we write $g \leftrightarrow g'$ if both are regular 
(i.e., all roots in $\bar{F}$ of the characteristic polynomial are distinct) and have the same characteristic polynomial. We write $G(F)^{\reg}$ for the set of regular semisimple elements in $G(F)$. We denote by $C^{\infty}_{c}(G(F))$ the Hecke algebra of locally constant functions on $G(F)$ with compact support. Fix a Haar measure $dg$ on $G(F)$. For any $\rho \in \Irr(G(F))$, there is a unique locally constant function $\Theta_{\rho}$ on $G(F)^{\reg}$ which is invariant under conjugation by $G(F)$ such that
\[
tr \rho (f) = \dint_{G(F)^{\reg}} \Theta_{\rho}(g)f(g)dg
\]
for all $f \in C^{\infty}_{c}(G(F))$.
We refer to \cite[p.96]{hc81} and \cite[b. p.33]{dkv} for details. The same is true for the $F$-inner form $G'$. We state the local Jacquet-Langlands correspondence as follows.

\begin{pro} (\cite[B.2.a]{dkv}, \cite[Theorem 5.8]{rog83}, and \cite[Theorem 2.2]{ba08}) \label{proposition of local JL for essential s i}
There is a unique bijection $\mathbf{C} : \esq(G(F)) \longrightarrow \esq(G'(F))$ such that: for all $\sigma \in \esq(G(F))$, we have
\[
\Theta_{\sigma}(g) = (-1)^{n-m} \Theta_{\mathbf{C}(\sigma)}(g') 
\]
for all $g \leftrightarrow g'$. 
\end{pro}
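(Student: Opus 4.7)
My plan is to follow the Deligne--Kazhdan--Vigneras strategy: transfer test functions between $G(F)$ and $G'(F)$, convert this to character identities via the Weyl integration formula, and extract the bijection using Kazhdan's density theorem. First, I would establish the function transfer: for every $f' \in C_c^{\infty}(G'(F))$, produce $f \in C_c^{\infty}(G(F))$ whose orbital integrals $O_g(f)$ satisfy $O_g(f) = (-1)^{n-m}\, O_{g'}(f')$ whenever $g \leftrightarrow g'$, and $O_g(f) = 0$ for every regular semisimple $g \in G(F)$ whose characteristic polynomial is not realised in $G'(F)$. This encodes the geometric fact that the regular semisimple conjugacy classes of $G'(F) = GL_m(D)$ inject, via reduced characteristic polynomial, into those of $G(F) = GL_n(F)$; the sign $(-1)^{n-m}$ tracks the comparison of Weyl discriminants on corresponding elliptic tori in the two groups (which have different ranks over $F$).

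Second, I would combine the transfer with the Weyl integration formula
\[
\tr \sigma(f) \;=\; \sum_{\{T\}} \frac{1}{|W(G,T)|} \int_{T(F)^{\reg}} |D(t)|\, \Theta_{\sigma}(t)\, O_t(f)\, dt
\]
on both $G(F)$ and $G'(F)$; after matching, only the elliptic tori contribute on the $G$-side. Invoking the elliptic Paley--Wiener theorem (equivalently, a simple form of the local trace formula) converts the matching of orbital integrals into a linear identity between the invariant distributions $f \mapsto \tr\sigma(f)$ for $\sigma \in \esq(G(F))$ and $f' \mapsto \tr\sigma'(f')$ for $\sigma' \in \esq(G'(F))$.

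Third, I would invoke Kazhdan's density theorem: characters of essentially square-integrable representations are linearly independent on the elliptic regular set and span the corresponding space of invariant distributions modulo characters of properly parabolically induced representations. Combined with surjectivity of the transfer on the relevant space of orbital integrals, this produces the unique bijection $\mathbf{C}\colon \esq(G(F)) \to \esq(G'(F))$ satisfying $\Theta_{\sigma}(g) = (-1)^{n-m}\, \Theta_{\mathbf{C}(\sigma)}(g')$ for all $g \leftrightarrow g'$; uniqueness of $\mathbf{C}$ is automatic from the linear independence of characters.

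The main obstacle will be the first step: constructing the matching of orbital integrals with the prescribed vanishing condition on non-transferred classes. This requires control of the asymptotic behaviour of orbital integrals via Shalika germ expansions around singular semisimple elements, together with a careful parabolic descent to handle the non-elliptic tori, and is the technical heart of the proofs in \cite{dkv}, \cite{rog83} and \cite{ba08}---where DKV rely on purely local germ analysis, Rogawski on a simple form of the trace formula, and Badulescu on a close-fields argument to reach positive characteristic.
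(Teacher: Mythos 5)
The paper does not prove this proposition; it is imported verbatim with citations to Deligne--Kazhdan--Vigneras, Rogawski, and Badulescu, and the proposition is used as a black box for the remainder of the argument. Your sketch is a fair high-level summary of the strategy in those sources (orbital-integral transfer with the sign $(-1)^{n-m}$, Weyl integration formula reducing to elliptic tori, and Kazhdan density/linear independence of discrete-series characters on the elliptic regular set to extract and pin down the bijection), so there is nothing in the paper to compare it against beyond the citation itself; the substance you outline is the content of the cited works, not of this paper.
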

\begin{rem}(\cite[Introduction d.4)]{dkv}) \label{torsion by character}
For any $\sigma \in \esq(G(F))$ and quasi-character $\eta$ on $F^{\times}$, we have $\mathbf{C}(\sigma \otimes (\eta \circ \det)) = \mathbf{C}(\sigma) \otimes (\eta \circ \Nrd)$, where $\Nrd : GL_m(D) \rightarrow F^{\times}$ is the reduced norm (cf. \cite[Section 53.5]{bh06}).
\end{rem}
\begin{exa} \label{example for st}
Denote by $\St_G$ (resp. $\St_{G'}$) the Steinberg representation of $G(F)$ (resp. $G'(F)$). Since $\Theta_{\St_G}(g) = (-1)^{n-m} \Theta_{\St_{G'}}(g')$ for all $g \leftrightarrow g'$ \cite[Section 15]{hc73}, we have $\mathbf{C} (\St_G) = \St_{G'}$.
\end{exa}

We denote by $R(G)$ the Grothendieck group of admissible representations of finite length of $G(F)$. So, $R(G)$ is a free abelian group with basis $\Irr(G(F))$. Let $R(G')$ be the Grothendieck group for the $F$-inner form $G'$. In what follows, we extend the correspondence $\mathbf{C}$ to a $\ZZ$-morphism from $R(G)$ to $R(G')$. We refer to \cite[Section 2.7]{ba08}.

Let $\mathcal{B}$ be the collection of all normalized (twisted by $\delta_{P}^{1/2}$) induced representation $\ii_{G,L} \sigma$, where $L$ is a standard Levi subgroup of $G$ and $\sigma \in \esq(L(F))$. Let $\mathcal{B}'$ have the corresponding meaning for the $F$-inner form $G'$. We notice that any element $\Sigma \in \mathcal{B}$ (resp. $\Sigma' \in \mathcal{B}'$) has a unique irreducible quotient by the Langlands classification (see \cite[Theorem 1.2.5]{ku94}). We denote it by $Lg(\Sigma)$ (resp. $Lg(\Sigma')$).
We note that the set $\mathcal{B}$ is a basis of $R(G)$ and the map $\Sigma \mapsto Lg(\Sigma)$ is a bijection from $\mathcal{B}$ onto $\Irr(G(F))$. The same is true for the $F$-inner form $G'$.

Given a basis element $\Sigma'= \ii_{G',L'} \sigma' \in \mathcal{B}'$ with a standard $F$-Levi subgroup $L'$ of $G'$, we set $\Lambda(\Sigma'):= \ii_{G,L} \mathbf{C}^{-1}(\sigma')$. Here $L$ is the standard Levi subgroup of $G$ corresponding to $L'$ (see Section \ref{inner forms}). From Proposition \ref{proposition of local JL for essential s i}, we notice that $\Lambda(\Sigma')$ lies in $\mathcal{B}$. Thus, $\Lambda$ defines a map from $\mathcal{B}'$ into $\mathcal{B}$, which is clearly injective. Further, since $\Lambda(\Sigma')$ has a unique irreducible quotient (denoted by $Lg(\Lambda(\Sigma'))$), $\Lambda$ induces a map from $\Irr(G'(F))$ into $\Irr(G(F))$ by sending 
$Lg(\Sigma') \mapsto Lg(\Lambda(\Sigma'))$.
 
\begin{defn} \label{def of LJ}
We define a $\ZZ$-morphism $\mathbf{LJ} : R(G) \rightarrow R(G')$
by setting $\mathbf{LJ}(\Lambda(\Sigma'))= \Sigma'$ and $\mathbf{LJ}(\Sigma) = 0$ 
if $\Sigma$ is not in the image of $\Lambda$.
\end{defn} 
\begin{rem}
There exists an irreducible unitary representation of $G'(F)$ which is not in the image of 
$\Lambda$ \cite[Lemma 3.11]{ba08}. Moreover, the map $\mathbf{LJ}$ sends an irreducible unitary representation of $G(F)$ to either $0$ or an irreducible unitary representation of $G'(F)$ \cite[Proposition 3.9]{ba08}.
\end{rem}
We have the following correspondence between the Grothendieck groups.
\begin{thm} (\cite[Theorem 2.7]{ba08}) \label{prop of extened JL}
There is a unique map $\mathbf{LJ} : R(G) \rightarrow R(G')$ such that: for all $\sigma \in R(G)$, we have
\[
\Theta _{\sigma}(g) = (-1)^{n-m} \Theta_{\mathbf{LJ}(\sigma)}(g')
\]
for all $g \leftrightarrow g'$.
Furthermore, $\mathbf{LJ}$ is a surjective group homomorphism.
\end{thm}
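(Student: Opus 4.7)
The map $\mathbf{LJ}$ has already been prescribed on the basis $\mcB$ of $R(G)$ by Definition~\ref{def of LJ} and extends by $\ZZ$-linearity to all of $R(G)$, producing a group homomorphism by construction. Three things remain: (i) verify the asserted character identity on $R(G)$; (ii) establish uniqueness; (iii) establish surjectivity. The principal input is Proposition~\ref{proposition of local JL for essential s i}, which gives a character-matching bijection $\mathbf{C}$ at the level of essentially square-integrable representations, combined with the classical van Dijk--Harish-Chandra character formula for parabolically induced representations.

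\textbf{Character identity.} By $\ZZ$-linearity it suffices to check each basis element $\Sigma = \ii_{G,L}\sigma \in \mcB$. Fix $g \in G(F)^{\reg}$ with $g \leftrightarrow g'$. The induced character formula expresses $\Theta_\Sigma(g)$ as a finite sum, indexed by $L(F)$-conjugacy classes of elements of $L(F)^{\reg}$ that are $G(F)$-conjugate to $g$, of $\Theta_\sigma$ evaluated on those classes, weighted by a Jacobian factor coming from the parabolic. Two cases arise. If $\Sigma = \Lambda(\Sigma')$ for some $\Sigma' = \ii_{G',L'}\sigma' \in \mcB'$, then $L$ and $L'$ correspond as in Section~\ref{inner forms}, $\sigma = \mathbf{C}^{-1}(\sigma')$, and each $L(F)$-conjugacy class appearing in the sum for $\Theta_\Sigma(g)$ matches, via characteristic polynomials factor-by-factor, a unique $L'(F)$-conjugacy class in the corresponding sum for $\Theta_{\Sigma'}(g')$. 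Applying Proposition~\ref{proposition of local JL for essential s i} on each $GL_{n_i}(F)$ factor of $L(F)$ and multiplying the signs $(-1)^{n_i-m_i}$ across all factors produces the total sign $(-1)^{n-m}$, yielding $\Theta_\Sigma(g) = (-1)^{n-m}\Theta_{\Sigma'}(g')$. If instead $\Sigma$ is not in the image of $\Lambda$, then some $GL_{n_i}$ factor of $L$ fails to be $d$-divisible, so no Levi of $G'$ corresponds to $L$ and no regular semisimple class of $L(F)$ transfers into $L'(F)$; hence every term of the character sum vanishes, giving $\Theta_\Sigma(g) = 0$ for every $g \leftrightarrow g'$, in agreement with $\mathbf{LJ}(\Sigma) = 0$.

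\textbf{Uniqueness and surjectivity.} For uniqueness, suppose two $\ZZ$-linear maps $\mathbf{LJ}_1, \mathbf{LJ}_2 : R(G) \to R(G')$ both satisfy the character identity. Then for each $\sigma \in R(G)$ the virtual character $\Theta_{\mathbf{LJ}_1(\sigma)-\mathbf{LJ}_2(\sigma)}$ vanishes on the set of $g' \in G'(F)^{\reg}$ arising as transfers $g \leftrightarrow g'$; since every regular semisimple conjugacy class in $G'(F)$ is of this form (its reduced characteristic polynomial also realises a class in $G(F)^{\reg}$) and a virtual character that vanishes on $G'(F)^{\reg}$ is zero, we get $\mathbf{LJ}_1 = \mathbf{LJ}_2$. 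For surjectivity, the definition gives $\mathbf{LJ}(\Lambda(\Sigma')) = \Sigma'$ for every $\Sigma' \in \mcB'$, and $\mcB'$ spans $R(G')$ over $\ZZ$.

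\textbf{Main obstacle.} The technical heart of the argument is the term-by-term matching of the induced character formulas on $G(F)$ and on $G'(F)$: verifying that $L(F)$-regular semisimple classes transferring to $L'(F)$ exhaust the sum when $L$ admits a Levi $L'$ of $G'$ as its $F$-inner form, that they vanish outright otherwise, and that the global sign $(-1)^{n-m}$ assembles correctly from the contributions $(-1)^{n_i-m_i}$ of the individual $GL_{n_i}$ factors. Once this bookkeeping is in place the identity reduces, factor-by-factor, to Proposition~\ref{proposition of local JL for essential s i}.
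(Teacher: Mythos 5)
The paper does not prove this statement; it only records the construction of $\mathbf{LJ}$ (Definition~\ref{def of LJ}) and cites \cite[Theorem 2.7]{ba08} for the character identity. Your proposal reconstructs Badulescu's argument and follows the same route: van Dijk's induced-character formula reduces the identity basis-element-by-basis-element to the essentially square-integrable case (Proposition~\ref{proposition of local JL for essential s i}), with uniqueness from linear independence of characters on $G'(F)^{\reg}$ and surjectivity immediate from $\mathbf{LJ}\circ\Lambda = \mathrm{id}$.

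One remark on the step you flag as the main obstacle, which deserves to be made explicit because it is what makes the matching work in both cases at once. If $g \leftrightarrow g'$ with $g' \in G'(F)^{\reg} = GL_m(D_d)^{\reg}$, then every irreducible factor of the reduced characteristic polynomial of $g'$ has degree divisible by $d$ (since $F[g']$ is a \emph{maximal} \'etale subalgebra of $M_m(D_d)$, each factor field $E_j$ must be maximal in some $M_{k_j}(D_d)$, forcing $[E_j:F]=k_j d$). Consequently: (i) if $L=\prod GL_{n_i}$ has some $n_i$ not divisible by $d$, no rearrangement of these factors sums to $n_i$, so the $G(F)$-class of $g$ does not meet $L(F)$ and the van Dijk sum for $\Theta_\Sigma(g)$ is empty — this is why $\mathbf{LJ}(\Sigma)=0$ is consistent; (ii) if $L$ does correspond to a Levi $L'$, then every $L(F)$-class appearing in the sum automatically transfers to an $L'(F)$-class (blockwise each factor of $g$ has a char.\ poly.\ whose irreducible parts have $d$-divisible degree, hence is realised in the corresponding $GL_{m_i}(D_{d_i})$), so the two sums are genuinely in bijection and no terms are lost. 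With that observation in place, and noting that the Weyl discriminant factors $|D_G(g)|^{-1/2}|D_L(x)|^{1/2}$ depend only on characteristic polynomials and therefore agree on the two sides, your term-by-term matching and the multiplicativity of the sign $(-1)^{n_i-m_i}$ across $GL_{n_i}$-factors complete the argument as you outline.
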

\begin{exa} \label{example for LJ}
Suppose $G=GL_2$. Denote by $\mathbbm{1}_G$ the trivial representation of $G(F)$. Fix a Borel subgroup $B=TU$ of $G$. Note that $\ii_{G, T} (\delta_{B}^{-1/2}) = \mathbbm{1}_G + \St_G$ as elements of $R(G)$. Since there is no $F$-Levi subgroup of $G'$ corresponding $T$, we have $\mathbf{LJ}(\ii_{G, T} (\delta_{B}^{-1/2})) = 0$. Thus, Example \ref{example for st} yields 
\[
\mathbf{LJ}(\St_G) = \St_{G'} ~ ~ \text{and} ~ ~ \mathbf{LJ}(\mathbbm{1}_G) = - \St_{G'}.
\]
\end{exa}
\begin{defn} \label{def of d-compatible}
We say that $\sigma \in R(G)$ is $d$\textit{-compatible} if $\mathbf{LJ}(\sigma) \neq 0$.
\end{defn}
\begin{rem} \label{remark for extended d-compatible}
It follows from Proposition \ref{proposition of local JL for essential s i} that $\sigma \in \esq(G(F))$ is always $d$-compatible.
\end{rem}
We have the following correspondence for $d$-compatible irreducible unitary representations.
\begin{pro}(\cite[Proposition 3.9]{ba08})  \label{local JL for unitary rep}
If $u$ is a $d$-compatible irreducible unitary representation of $G(F)$,
then there exists a unique irreducible unitary representation $u'$ of $G'(F)$ and
a unique sign $\epsilon \in \lbrace -1, 1 \rbrace$ such that
\[
\Theta _{u}(g) = \epsilon \, \Theta_{u'}(g') 
\]
for all $g \leftrightarrow g'$.
\end{pro}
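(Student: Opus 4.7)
The plan is to exploit the extended Jacquet--Langlands map $\mathbf{LJ}: R(G) \ra R(G')$ of Theorem \ref{prop of extened JL} together with Tadi\'c's classification of the unitary dual of $GL_n(F)$, extended to $GL_m(D)$ by Badulescu. The goal is to show that if $u \in \Irr_u(G(F))$ is $d$-compatible, then $\mathbf{LJ}(u) \in R(G')$ collapses to $\epsilon \cdot u'$ for a single $u' \in \Irr_u(G'(F))$ and a sign $\epsilon \in \lbrace \pm 1 \rbrace$, after which the asserted character identity is immediate from Theorem \ref{prop of extened JL}.

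First, I would express $u$ in terms of Tadi\'c's unitary building blocks, namely Speh-type representations $u(\delta,k)$ and their complementary-series deformations $\pi(u(\delta,k), \alpha)$, where each $\delta \in \esq(GL_{n_j}(F))$ is inducing data on a standard $F$-Levi. The $d$-compatibility hypothesis forces each $n_j$ to be divisible by $d$, so Proposition \ref{proposition of local JL for essential s i} transfers every such $\delta$ to a unique $\delta' \in \esq(GL_{n_j/d}(D))$, up to the sign $(-1)^{n_j - n_j/d}$.

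Second, I would invoke the multiplicativity of $\mathbf{LJ}$ along parabolic induction that is built into the definition of $\Lambda$ in Section \ref{local JL}: writing $u$ as the Langlands quotient of $\ii_{G,L}(\delta_1 \otimes \cdots \otimes \delta_k)$ and assembling the transferred $\delta'_j$ into the parallel inducing datum on an $F$-Levi $L'$ of $G'$, the character identity of Theorem \ref{prop of extened JL} gives
\[
\mathbf{LJ}(u) \;=\; \epsilon \cdot u',
\]
where $u'$ is the analogous Tadi\'c/Badulescu unitary building block on $G'(F)$ and $\epsilon = (-1)^{n-m}$ adjusted by the parity accumulated in reassembling the Langlands data. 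The crucial input, which is precisely Badulescu's extension of Tadi\'c's unitarity theorem, is that at the unitary parameter the parabolically induced representation on the inner-form side remains irreducible, so that $\mathbf{LJ}(u)$ is not merely a $\ZZ$-combination of several irreducibles but a single signed irreducible unitary representation.

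Third, the uniqueness of the pair $(u', \epsilon)$ follows from the linear independence of irreducible characters on $G'(F)^{\reg}$: any two distinct pairs $(\epsilon_i, u'_i)$ satisfying the identity would produce two different representatives of the single element $\mathbf{LJ}(u) \in R(G')$, contradicting the fact that $\Irr(G'(F))$ is a $\ZZ$-basis of $R(G')$. The main obstacle is Step 2: the collapse of $\mathbf{LJ}(u)$ to a \emph{single} signed irreducible is not formal, because a priori $\mathbf{LJ}(u)$ is only a virtual representation and the transferred inducing data $\delta'_1 \otimes \cdots \otimes \delta'_k$ could induce reducibly on $L'$. Ruling this out at every unitary point requires Badulescu's analog of the Tadi\'c irreducibility/unitarity classification for $GL_m(D)$, which is the technical heart of \cite[Proposition 3.9]{ba08}.
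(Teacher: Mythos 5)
The paper does not prove this statement; it is cited verbatim from \cite[Proposition 3.9]{ba08}, so there is no in-paper argument to compare against, and your proposal must be judged as a reconstruction of Badulescu's own proof.

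Your outline captures the correct overall strategy: decompose $u$ via Tadi\'c's unitary building blocks (Speh representations $u(\delta,k)$ and complementary series), transfer the blocks through $\mathbf{LJ}$, and invoke the parallel irreducibility and unitarity on $GL_m(D)$ to see that the image is a single signed irreducible unitary. The uniqueness argument via linear independence of irreducible characters on $G'(F)^{\reg}$ is correct. Two steps, however, are asserted more casually than they can be justified. First, your claim that $d$-compatibility simply forces $d \mid n_j$ for each inducing datum $\delta_j \in \esq(GL_{n_j}(F))$ is an oversimplification: the vanishing or nonvanishing of $\mathbf{LJ}$ on a Speh block $u(\delta,k)$ is governed by a more delicate condition that depends on how $\delta$ is built from a supercuspidal via a Zelevinsky segment, and Badulescu's Lemma~3.3 is the precise computation — which also pins down the sign $\epsilon$ explicitly, rather than its being ``$(-1)^{n-m}$ adjusted by parity'' in an unspecified way. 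Second, the multiplicativity of $\mathbf{LJ}$ with respect to parabolic induction is not ``built into the definition of $\Lambda$'': $\Lambda$ and $\mathbf{LJ}$ are defined only on the distinguished basis $\mathcal{B}$, and the compatibility of $\mathbf{LJ}$ with induction on arbitrary elements of $R(G)$ is a separate lemma that Badulescu proves via the behaviour of characters under parabolic induction. You do correctly identify the genuine crux — the collapse of the virtual representation $\mathbf{LJ}(u)$ to $\pm u'$ with $u'$ irreducible and unitary, resting on the irreducibility of the parallel induction on $G'(F)$ — so the outline is essentially right, but a complete proof would need to substitute Badulescu's explicit Speh computation and the induction compatibility lemma for the two informal appeals above.
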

\begin{defn} \label{def of |JL|}
By sending $u \mapsto u'$, we define a map
$|\mathbf{LJ}|$ from the set of irreducible unitary $d$-compatible representations of $G(F)$ to the set of irreducible unitary representations of $G'(F)$. 
\end{defn}
\begin{rem} \label{rem of |JL|}
The restriction of $|\mathbf{LJ}|$ to the set $\ed(G(F))$ equals $\mathbf{C}$.
\end{rem}
\begin{exa} 
For the case that $G=GL_2$, the representation $\ii_{G, T} (\delta_{B}^{-1/2})$ is not $d$-compatible due to Example \ref{example for LJ}. We also note that $|\mathbf{LJ}|(\St_G) = |\mathbf{LJ}|(\mathbbm{1}_G) = \St_{G'}$. 
\end{exa}
\begin{rem} \label{remark for product of local GL's}
All statements in this section admit an obvious generalization to the case that $G$ is a product of a general linear groups.
\end{rem}
\subsection{Global Jacquet-Langlands Correspondence} \label{section global JL}
Let $\G$ be $\mathbf{GL}_n$ over a number field $\F$ and let $\G'$ be an $\F$-inner form of $\G$. Then $\G'(F)$ is of the form of $\mathbf{GL}_m(\D)$, where $\D$ denotes a central division algebra of dimension $d^{2}$ over $\F$ and $n=md$.

Set $\bA := M_{m}(\D)$, the $m \times m$ matrix algebra over $\D$. For each place $v$ of $\F$, we have $\bA_{v}=\bA \otimes_{\F} \F_{v} \s M_{m_{v}}(\D_{v})$ for some positive integer $m_{v}$ and some central division algebra $\D_{v}$ of dimension $d^{2}_{v}$ over $\F_{v}$ such that $m_{v} d_{v}=n$. If $d_{v}=1$ at some place $v$, we say that $\bA$ is \textit{split} at $v$. We denote by $S$ the set of places where $\bA$ is not split. Then $S$ turns out to be finite. Set $G'_v := G' \times_{\F} \F_{v}$. We notice that $\G'_v \s \G_v \s GL_n$ over $\F_v$ for all $v \notin S$.

Let $\AA$ be the ring of adeles of $\F$. We identify $\Z(\G) = \Z(\G') =: \Z$, the centres of $\G$ and $\G'$. Fix a unitary smooth character $\omega$ of the quotient $\Z(\F) \backslash \Z(\AA)$. Let $L^2(\Z(\AA) \G'(\F) \backslash \G'(\AA) ; \omega)$ be the space of classes of functions $f : \G'(\AA) \rightarrow \CC$ which are left invariant under $\G'(\F)$, transform under $\Z(\AA)$ by $\omega$ and are square-integrable modulo $\Z(\AA) \G'(\F)$. Consider the representation $R'_{\omega}$ of  $\G'(\AA)$ by the right translation in the space  $L^2(\Z(\AA) \G'(\F) \backslash \G'(\AA) ~ ; ~ \omega)$. Any irreducible subrepresentation of $R'_{\omega}$ is called to be a \textit{discrete series} of $\G'(\AA)$. Denote by $DS'$ (resp. $DS$) the set of all discrete series of $\G'(\AA)$ (resp. $\G(\AA)$). Every $\pi' \in DS'$ admits the restrict tensor product $\otimes_v \pi'_v$. It turns out that each $\pi'_v$ is an irreducible unitary admissible representation of $\G'(\F_v)$. For each place $v$, we use $d_v$-compatible and $|\mathbf{LJ}|_v$ to emphasize the place $v$ in Definitions \ref{def of d-compatible} and \ref{def of |JL|}, respectively.

\begin{defn} \label{def of D-compatible}
Let $\pi=\otimes_v \pi_v$ be in $DS$. We say that $\pi$ is $D$\textit{-compatible} if $\pi_v$ is $d_v$-compatible for all places $v \in S$.
\end{defn}
For all $v \notin S$, we abuse the notation $|\mathbf{LJ}|_v$ for \textit{the identity map} 
from $\Irr_u(\G(\F_v))$ to $\Irr_u(\G'(\F_v))$. The global Jacquet-Langlands correspondence is as follows.
\begin{thm} (\cite[Theorem 5.1]{ba08}) \label{global JL}
There exists a unique injective map $\Phi : DS' \rightarrow DS$ such that:
for all $\pi'=\otimes_v \pi'_v \in DS'$, we have
\[
|\mathbf{LJ}|_v(\Phi(\pi')_v) = \pi'_v.
\]
Moreover, the image of $\Phi$ is exactly the set of $D$-compatible discrete series of $\G(\AA)$.
\end{thm}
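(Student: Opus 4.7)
The plan is to deduce the theorem from a comparison of (simple) trace formulas for $\G$ and $\G'$, feeding in the local Jacquet--Langlands correspondence (Proposition \ref{proposition of local JL for essential s i} and Theorem \ref{prop of extened JL}) as the local matching data and then using strong multiplicity one for $\mathbf{GL}_n$ to convert the spectral identity into a bijection.

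First I would dispose of uniqueness and injectivity cheaply. Because $\G'_v \s \G_v$ for every $v \notin S$, the identification of $|\mathbf{LJ}|_v$ with the identity outside $S$ forces $\Phi(\pi')_v \s \pi'_v$ at all but finitely many places. Strong multiplicity one for $\mathbf{GL}_n$ over the number field $\F$ then pins down $\Phi(\pi')$ as an automorphic representation from its unramified data, and in particular two distinct $\pi'_1,\pi'_2 \in DS'$ cannot share a $\Phi$-image.

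For existence I would build matched test functions place by place. At each $v \in S$ the local character identity of Proposition \ref{proposition of local JL for essential s i}, extended to $R(\G_v)$ in Theorem \ref{prop of extened JL}, provides a transfer $f'_v \in C_c^\infty(\G'(\F_v)) \leftrightsquigarrow f_v \in C_c^\infty(\G(\F_v))$ with orbital integrals satisfying $O_g(f_v) = (-1)^{n-m_v} O_{g'}(f'_v)$ whenever $g \leftrightarrow g'$, and $O_g(f_v) = 0$ on regular semisimple classes with no $\G'(\F_v)$-counterpart. At the remaining places one uses the canonical identification $\G_v \s \G'_v$ to set $f_v = f'_v$. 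Feeding $f = \otimes f_v$ and $f' = \otimes f'_v$ into Arthur's invariant trace formula (arranged to be simple, e.g.\ by making one local component cuspidal and another a pseudo-coefficient of a discrete series) collapses both geometric sides onto elliptic regular orbital integrals, which agree by construction; the spectral sides then yield an identity
\[
\sum_{\pi \in DS_\omega} m(\pi)\, \tr \pi(f) \;=\; \sum_{\pi' \in DS'_\omega} m(\pi')\, \tr \pi'(f'),
\]
where the sign and the $d_v$-compatibility sign $(-1)^{n-m_v}$ have been absorbed into the test functions. Varying the unramified components outside $S$ and applying linear independence of characters via the Satake isomorphism isolates a fixed $\pi' \in DS'$ on the right and produces a single $\pi \in DS$ on the left with $|\mathbf{LJ}|_v(\pi_v) = \pi'_v$ for all $v \in S$. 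Setting $\Phi(\pi') := \pi$ gives the desired map.

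Finally, the image description follows by reversing the argument: if $\pi \in \mathrm{image}(\Phi)$, then every $\pi_v$ with $v \in S$ is by definition $d_v$-compatible, so $\pi$ is $D$-compatible; conversely, if $\pi \in DS$ is $D$-compatible then the surjectivity part of Theorem \ref{prop of extened JL} furnishes irreducible unitary $|\mathbf{LJ}|_v$-preimages $\pi'_v$ at each $v \in S$, and the same trace-formula comparison, now used in the opposite direction, shows that $\otimes_v \pi'_v$ occurs discretely in $L^2(\Z(\AA)\G'(\F)\backslash \G'(\AA);\omega)$. The principal obstacle is the trace-formula comparison itself: one must control the non-discrete spectral contributions and the non-elliptic geometric terms, and one needs enough flexibility in the local test functions to separate individual automorphic representations. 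This is handled as in Badulescu by using the invariant trace formula together with the fact that the local transfer is available as a $\ZZ$-morphism of Grothendieck groups, not merely on the tempered level, so that induced and residual contributions can be tracked explicitly and cancelled.
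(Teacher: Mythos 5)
The paper does not prove this statement; it cites it as Theorem 5.1 of Badulescu \cite{ba08}, and the subsequent remark in the text already flags the relevant literature (including the removal by Badulescu--Renard \cite{br10} of an assumption on $S$). So there is no "paper's own proof" to match against, only the external reference. Your sketch is, at the strategic level, the right one: Badulescu (following the division-algebra case in Arthur--Clozel \cite{ac89}) does compare trace formulas for $\G$ and $\G'$ with test functions matched through the local correspondence, and uniqueness together with the image description do ultimately rest on strong multiplicity one for the discrete spectrum of $\mathbf{GL}_n(\AA)$.

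That said, your sketch glosses over several points that are the actual content of the proof. First, making the trace formula \emph{simple} by forcing one local component to be cuspidal and another to be a pseudo-coefficient presupposes $|S|\geq 2$ and only accesses a restricted class of test functions; Badulescu works with the full Arthur--Clozel invariant trace formula precisely because the simple version cannot separate all the representations at stake, and the residual restriction on $S$ in \cite{ba08} (removed only in \cite{br10}) is exactly a trace of this tension. Second, $DS$ and $DS'$ include residual, non-cuspidal representations; the image of a cuspidal $\pi'\in DS'$ under $\Phi$ can be a Speh representation of $\mathbf{GL}_n(\AA)$, and matching these requires the M{\oe}glin--Waldspurger classification of the discrete spectrum together with the compatibility of the local transfer with parabolic induction that Theorem \ref{prop of extened JL} encodes at the Grothendieck-group level. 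Third, the claim that the signs $(-1)^{n-m_v}$ are "absorbed into the test functions" needs justification: the definition of $|\mathbf{LJ}|_v$ already carries a sign $\epsilon$ (Proposition \ref{local JL for unitary rep}), and the global argument has to verify that these local signs are globally coherent, which is not automatic from the local character identities alone. Your outline captures the shape of the argument, but a complete proof along these lines is essentially the full bookkeeping of \cite{ba08}, not a shortcut around it.
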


\noindent We note that there was an assumption on the set $S$ in \cite{ba08} and it has been removed in \cite{br10}.
Let the space  $L^2_c(\Z(\AA) \G'(\F) \backslash \G'(\AA) ~ ; ~ \omega)$ denote the subspace of all the cusp forms in $L^2(\Z(\AA) \G'(\F) \backslash \G'(\AA) ~ ; ~ \omega)$. It turns out that the space  $L^2_c(\Z(\AA) \G'(\F) \backslash \G'(\AA) ~ ; ~ \omega)$  is stable under $R'_{\omega}$. Any irreducible subrepresentation of $R'_{\omega}$ in $L^2_c(\Z(\AA) \G'(\F) \backslash \G'(\AA) ~ ; ~ \omega)$ is called to be a \textit{cuspidal automorphic representation} of $\G'(\AA)$. The following proposition describes the behavior of cuspidal automorphic representations under the map $\Phi$.

\begin{pro} (\cite[Proposition 5.5 and Corollary A.8]{ba08}) \label{JL for cuspidal}
Let $\pi \in DS$ be a cuspidal automorphic representation of $\G(\AA)$. If $\pi$ is $D$-compatible, then $\Phi^{-1}(\pi)$ is a cuspidal automorphic representation of $\G'(\AA)$.
\end{pro}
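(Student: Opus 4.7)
The plan is to combine the Moeglin--Waldspurger classification of the discrete spectrum of $\mathbf{GL}_n$ with an analogous classification on the inner form side and then check that $\Phi$ respects the integer that distinguishes the cuspidal part from the residual part. By Moeglin--Waldspurger, every $\pi \in DS$ is of the form $\mathrm{MW}(\rho,k)$ with $n=k\ell$ and $\rho$ a cuspidal automorphic representation of $\mathbf{GL}_\ell(\AA)$; the representation $\pi$ is cuspidal precisely when $k=1$, and for $k>1$ it arises as an iterated residue of Eisenstein series attached to $\rho$. At each place $v$, the local component $\pi_v$ is the Langlands quotient (Speh representation) built from $\rho_v$ and $k$, and consequently at an unramified place the Satake parameter of $\pi_v$ is obtained from that of $\rho_v$ by the standard recipe. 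In particular the integer $k$ can be read off the Satake parameters of $\pi$ at almost all places.

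Next I would invoke the analogue of Moeglin--Waldspurger for $\G'(\AA) = \mathbf{GL}_m(\D)(\AA)$: every $\pi' \in DS'$ admits a presentation $\mathrm{MW}^{\G'}(\rho', k')$ where $\rho'$ is a cuspidal automorphic representation of a suitable standard Levi of $\G'$, and $\pi'$ is cuspidal iff $k'=1$. Such a classification is obtained by transporting Moeglin--Waldspurger's Eisenstein-series residue argument to $\G'$, using the local Jacquet--Langlands correspondence $\mathbf{C}$ of Proposition \ref{proposition of local JL for essential s i} to describe the essentially square-integrable local building blocks at places in $S$. By Theorem \ref{global JL} the identity $|\mathbf{LJ}|_v(\Phi(\pi')_v) = \pi'_v$ holds at every place; in particular $\Phi(\pi')_v \simeq \pi'_v$ for $v \notin S$, so the unramified Satake parameters of $\Phi(\pi')$ and of $\pi'$ agree outside $S$. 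Reading the Moeglin--Waldspurger integer off the unramified Satake parameters on each side, one concludes that the integer $k$ attached to $\Phi(\pi')$ equals the inner-form integer $k'$ attached to $\pi'$.

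Granted this compatibility, the proposition follows at once: if $\pi = \Phi(\pi')$ is cuspidal then $k=1$, hence $k'=1$, and therefore $\Phi^{-1}(\pi) = \pi'$ is cuspidal. The main obstacle I anticipate is establishing the inner form Moeglin--Waldspurger classification together with the compatibility statement, since both require detailed work with residues of Eisenstein series on $\mathbf{GL}_m(\D)$ and a careful matching of local data at every place, tasks that rest on the trace formula comparison underlying the construction of $\Phi$. A slicker alternative that avoids the inner form classification is a direct contradiction argument: if $\Phi^{-1}(\pi)$ were residual, standard description of the residual spectrum of $\G'$ would force $(\Phi^{-1}(\pi))_v$ to be a nontrivial Speh-type representation at almost every place, and the equality $\Phi(\pi')_v \simeq \pi'_v$ for $v \notin S$ would then force $\pi_v$ itself to be a nontrivial Speh representation at almost every place, contradicting the cuspidality of $\pi$.
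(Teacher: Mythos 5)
The paper does not prove this proposition; it is a citation to Badulescu's work \cite[Proposition 5.5 and Corollary A.8]{ba08}, so there is no internal proof to compare against. Your sketch is a faithful outline of the strategy Badulescu himself uses: appeal to the M{\oe}glin--Waldspurger classification on the split side, transport it to the inner form $\G'$, and read off the integer $k$ distinguishing cuspidal from residual from the unramified local data, which agrees outside the finite set $S$ by Theorem \ref{global JL}. You are also right that the real content lives in establishing the inner-form analogue of M{\oe}glin--Waldspurger, which is precisely the bulk of \cite[Section 5 and Appendix A]{ba08}; neither your first route nor the ``slicker'' contradiction argument actually sidesteps this. Your alternative argument deserves one clarification: its engine is the fact that a cuspidal automorphic representation of $\mathbf{GL}_n(\AA)$ is globally generic by Shalika's theorem, hence locally generic, hence its unramified local components are full unramified principal series rather than proper Speh quotients; this is what rules out $\pi_v$ being a nontrivial Speh representation at even one place. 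You still need to know that a \emph{residual} representation of $\G'(\AA)$ has nontrivial Speh (Langlands quotient) local components at almost all places, and that, once again, is a consequence of the inner-form residual-spectrum description that Badulescu establishes. So both of your routes are correct in outline and both ultimately defer to the same nontrivial input; your write-up is appropriately candid about where the gap lies.
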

\begin{rem}
There exists a cuspidal automorphic representation of $\G'(\AA)$ whose image through the map $\Phi$ is not cuspidal (see \cite[Proposition 5.5.(b)]{ba08}).
\end{rem}
\begin{rem} \label{remark for product of GL's}
All statements in this section admit an obvious generalization to the case that $\G$ is a product of a general linear groups.
\end{rem}
\section{Local Jacquet-Langlands Type Correspondence} \label{type}
In this section, we define the local Jacquet-Langlands type (JL-type) correspondence which is a general version of the local Jacquet-Langlands correspondence. 

\subsection{Restriction of Representations} \label{section for Tadic results}
We recall the results of Tadi{\'c} in \cite{tad92}. Throughout Section \ref{section for Tadic results}, $G$ and $\tG$ denote connected reductive groups over a $p$-adic field $F$ of characteristic $0$, such that
\[
G_{\der} = \tG_{\der} \subseteq G \subseteq \tG,
\]
where the subscript $_{\der}$ means the derived group.

\begin{pro} (\cite[Lemma 2.1 and Proposition 2.2]{tad92}) \label{prop 2.2 tadic}
For any $\sigma \in \Irr(G(F))$, there exists $\tsigma \in \Irr(\tG(F))$ such that $\sigma$ is isomorphic to an irreducible constituent of the restriction $\widetilde {\sigma} |_{G(F)}$ of $\tsigma$ to $G(F)$.
\end{pro}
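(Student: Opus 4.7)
The plan is to use a Clifford-theoretic construction: extend $\sigma$ to a representation of $Z_{\widetilde{G}}(F)\,G(F)$ by prescribing a suitable central character, induce up to $\widetilde{G}(F)$, and then pick off an irreducible summand. Throughout, write $Z := Z(G)$ and $\widetilde{Z} := Z(\widetilde{G})$ for the centers.

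First I would establish the structural facts about $G(F)$ inside $\widetilde{G}(F)$. Since $G_{\der} = \widetilde{G}_{\der} \subseteq G$, every commutator in $\widetilde{G}(F)$ lies in $G(F)$, so $G(F)$ is normal in $\widetilde{G}(F)$ with abelian quotient. Moreover, from the standard decomposition $\widetilde{G} = \widetilde{Z}^{\circ}\cdot \widetilde{G}_{\der} = \widetilde{Z}^{\circ} \cdot G_{\der} \subseteq \widetilde{Z}\cdot G$, I get that $\widetilde{G} = \widetilde{Z}\cdot G$ as $F$-algebraic groups, with kernel $\widetilde{Z}\cap G$. Taking Galois cohomology of the short exact sequence
\[
1 \longrightarrow \widetilde{Z}\cap G \longrightarrow \widetilde{Z}\times G \longrightarrow \widetilde{G} \longrightarrow 1
\]
yields an injection $\widetilde{G}(F)/\widetilde{Z}(F)G(F) \hookrightarrow H^1(F, \widetilde{Z}\cap G)$. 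Because $F$ is $p$-adic and $\widetilde{Z}\cap G$ is a group of finite type, this cohomology set is finite, so $H := \widetilde{Z}(F)G(F)$ has finite index in $\widetilde{G}(F)$. Note also that $\widetilde{Z}(F)\cap G(F) \subseteq Z(F)$, since elements of $\widetilde{Z}$ landing in $G$ commute with all of $\widetilde{G}$, in particular with $G$.

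Next I would construct the extension. Let $\omega_{\sigma}$ be the smooth central character of $\sigma$ on $Z(F)$. Its restriction to the abelian group $\widetilde{Z}(F)\cap G(F)$ is a smooth character, and since $\widetilde{Z}(F)$ is abelian and locally compact, I extend this restriction to a smooth character $\chi$ of $\widetilde{Z}(F)$. Because $\chi$ and $\sigma$ agree on the intersection $\widetilde{Z}(F)\cap G(F)$ and $\widetilde{Z}(F)$ commutes with $G(F)$, the recipe $(zg)\mapsto \chi(z)\sigma(g)$ defines an irreducible smooth representation $\sigma\otimes\chi$ of $H$. Form the induced representation $\pi := \mathrm{Ind}_{H}^{\widetilde{G}(F)}(\sigma\otimes\chi)$. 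Since $[\widetilde{G}(F):H]$ is finite (smooth induction equals compact induction here), $\pi$ has finite length and is admissible, so it possesses an irreducible subrepresentation $\widetilde{\sigma}\in\mathrm{Irr}(\widetilde{G}(F))$.

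Finally, Frobenius reciprocity yields a nonzero map $\sigma\otimes\chi \hookrightarrow \widetilde{\sigma}|_H$, and restricting further to $G(F)\subseteq H$ gives a nonzero $G(F)$-equivariant embedding $\sigma \hookrightarrow \widetilde{\sigma}|_{G(F)}$. Because the restriction is completely reducible by the preceding Lemma 2.1, $\sigma$ appears as an irreducible direct summand of $\widetilde{\sigma}|_{G(F)}$, which is exactly the conclusion. The main obstacle I anticipate is the finiteness of the index $[\widetilde{G}(F):\widetilde{Z}(F)G(F)]$, which relies on the $p$-adic finiteness of $H^{1}(F,\widetilde{Z}\cap G)$; the extension of the central character and the admissibility of $\pi$ are then routine.
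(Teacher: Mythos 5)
Your argument is correct and is essentially the same Clifford-theoretic strategy behind Tadi\'c's cited proof (the paper itself only cites \cite[Lemma 2.1 and Prop.\ 2.2]{tad92} without reproducing it): one shows that $H := Z(\widetilde G)(F)\,G(F)$ has finite index in $\widetilde G(F)$ via the finiteness of $H^1(F,\cdot)$ for groups of multiplicative type over a $p$-adic field, extends the central character across $Z(\widetilde G)(F)$, inflates $\sigma$ to $H$, induces to $\widetilde G(F)$, and uses Frobenius reciprocity together with semisimplicity of the restriction (Lemma 2.1) to land $\sigma$ inside some irreducible $\widetilde\sigma|_{G(F)}$. The only wording nit is that Frobenius reciprocity for (full smooth) induction gives a surjection $\widetilde\sigma|_H \twoheadrightarrow \sigma\otimes\chi$ rather than an embedding; either invoke semisimplicity of $\widetilde\sigma|_H$, or use compact induction (legitimate here, as you note, since the index is finite) to get the embedding directly.
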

\noindent 
Given $\sigma \in \Irr(G(F))$, we denote by $\Pi_{\tsigma}(G(F))$ the set of equivalence classes of all irreducible constituents of $\tsigma|_{G(F)}$. 
\begin{rem} (\cite[Proposition 2.7]{tad92}) \label{remark for sc to sc}
Any member in $\Pi_{\tsigma}(G(F))$ is supercuspidal, essentially square-integrable, discrete series or tempered if and only if $\widetilde{\sigma}$ is.
\end{rem}
\begin{rem} (\cite[Corollary 2.5]{tad92}) \label{remark for lifting}
If $\tsigma_1 \in \Irr(\tG(F))$ is another choice of $\tsigma$ in Proposition \ref{prop 2.2 tadic}, then there exist a quasi-character $\eta$ on $\tG(F) / G(F)$ such that $\tsigma_1 \s \tsigma \otimes \eta$.
It turns out that the set $\Pi_{\tsigma}(M(F))$ is finite and independent of the choice of $\tsigma$.
\end{rem}
\begin{rem} \label{rem of l-packet}
Let $G = SL_n$ over $F$ or its $F$-inner form, and let $\tG=GL_n$ over $F$ or its $F$-inner form. It then turns out \cite{gk82, hs11} that any $L$-packet is of the form $\Pi_{\tsigma}(G(F))$ for some $\tsigma \in \Irr(\tG(F))$.
\end{rem}
\subsection{Definition of the Local Jacquet-Langlands Type Correspondence} \label{section of def of LJL-type}
Let $\tG$ be $\prod_{i=1}^{r} GL_{n_i}$ over a $p$-adic field $F$ of characteristic $0$ and let $\tG'$ be an $F$-inner form of $\tG$. Then $G'(F)$ is of the form of $\prod_{i=1}^{r} GL_{m_i}(D_{d_i})$, where $D_{d_i}$ denotes a central division algebra of dimension $d_i^{2}$ over $F$ and $n_i=m_i d_i$. Let $G$ be a connected reductive $F$-split group such that
\begin{equation}  \label{cond on G and G}
G_{\der} = \tG_{\der} \subseteq G \subseteq \tG.
\end{equation}
Let $G'$ be an $F$-inner form of $G$. It follows that $G'_{\der} = \tG'_{\der} \subseteq G' \subseteq \tG'$.

\begin{defn} \label{def of LJL-type}
Given $\sigma \in \Irr(G(F))$ and $\sigma' \in \Irr(G'(F))$, we say that $\sigma$ and $\sigma'$ are 
\textit{under the local Jacquet-Langlands type (JL-type) correspondence}
if there exist $\tsigma \in \Irr(\tG(F))$ and $\tsigma' \in \Irr(\tG'(F))$ such that
\begin{itemize}
  \item[(a)] $\sigma$ and $\sigma'$ are isomorphic to irreducible constituents of the restrictions $\tsigma |_{G(F)}$ and $\tsigma' |_{G'(F)}$, respectively,
  \item[(b)] $\mathbf{LJ}(\tlsigma) = \tlsigma'$.
\end{itemize}
\end{defn}
\begin{exa} \label{an exa for JL-type}
Steinberg representations $\St_G$ and $\St_{G'}$ are always under the local JL-type correspondence. This follows from Example \ref{example for st} and the fact that the restrictions $\St_{\tG} |_{G(F)}$ and $\St_{\tG'} |_{G'(F)}$ are again Steinberg representations.
\end{exa}
\begin{rem} \label{an rem for JL-type}
Given $\tsigma \in \esq(\tG(F))$ and $\eta \in (\tG(F) / G(F))^D$, set $\tsigma' := \mathbf{C} (\tsigma \otimes \eta) \in \esq(\tG'(F))$ (see Proposition \ref{proposition of local JL for essential s i}). 
Then, any $\sigma \in \Pi_{\tsigma}(G(F))$ and $\sigma' \in \Pi_{\tsigma'}(G'(F))$ are under the local JL-type correspondence.
\end{rem}
\begin{rem} \label{L-packet remark}
The local JL-type correspondence can be regarded as a correspondence between $L$-packets of $G(F)$ and $G'(F)$ (cf. \cite[Chapters 11-15]{hs11}). 
\end{rem}
\section{A Local to Global Argument} \label{local to global}

In this section, we set up a local to global argument which will be used in Section \ref{main thms}. This generalizes the method of Mui{\'c} and Savin in \cite{ms00}.
\subsection{Construction of Global Data from Local Data} \label{const from l to g} 
Given local data, we first construct global data with prescribed local behavior.
\begin{lm} \label{construction F0}
Given a $p$-adic field $F$ of characteristic $0$, there exists a number field $\F^{0}$ such that $\F^{0}_{v_{0}} \s F$ for some finite place $v_{0}$ of $\F^{0}$.
\end{lm}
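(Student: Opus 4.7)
The plan is to realize $F$ as the completion of a number field at a finite place, via a Krasner's lemma argument that transfers a local minimal polynomial back to a rational one. First, since $F$ is a $p$-adic field of characteristic $0$, it is a finite extension of $\QQ_p$ for some rational prime $p$. By the primitive element theorem, I would write $F = \QQ_p(\alpha)$ for some $\alpha \in F$, and let $f(x) \in \QQ_p[x]$ denote the minimal polynomial of $\alpha$ over $\QQ_p$.

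Next, since $\QQ$ is dense in $\QQ_p$, I would approximate $f$ by a monic polynomial $g(x) \in \QQ[x]$ of the same degree whose coefficients are $p$-adically very close to those of $f$. By Krasner's lemma, a sufficiently sharp approximation guarantees that $g$ admits a root $\beta \in \bar{F}$ with $\QQ_p(\beta) = \QQ_p(\alpha) = F$. I would then set $\F^{0} := \QQ(\beta)$, which is a number field.

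To pin down the desired place, I compare degrees. Since $\beta$ is a root of $g \in \QQ[x]$, one has $[\F^{0} : \QQ] \leq \deg g = [F : \QQ_p]$. Conversely, the embedding $\F^{0} \hookrightarrow \bar{\QQ}_p$ sending $\beta$ to itself lands inside $\QQ_p(\beta) = F$; the corresponding finite place $v_{0}$ of $\F^{0}$ above $p$ gives a completion $\F^{0}_{v_{0}}$ whose closure in $\bar{\QQ}_p$ contains $\QQ_p(\beta) = F$, so $[\F^{0}_{v_{0}} : \QQ_p] \geq [F : \QQ_p]$. Combined with the general inequality $[\F^{0}_{v_{0}} : \QQ_p] \leq [\F^{0} : \QQ]$, this forces $\F^{0}_{v_{0}} \s F$ (and, as a byproduct, $g$ turns out to be irreducible over $\QQ$).

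The only non-routine ingredient is Krasner's lemma; beyond applying it with a careful choice of how close the coefficients of $g$ must be to those of $f$, I do not anticipate any substantial obstacle, and no further tools beyond primitive elements, density of $\QQ$ in $\QQ_p$, and elementary completion-degree bookkeeping are needed.
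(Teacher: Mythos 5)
Your argument is correct and follows essentially the same route as the paper. The paper's proof cites a corollary from Lang's \emph{Algebraic Number Theory} (p.~44) which is itself established by exactly the primitive-element-plus-Krasner's-lemma construction you carry out, yielding a number field $\F^{0}$ dense in $F$ with $[\F^{0}:\QQ]=[F:\QQ_p]$; the only difference is in the final bookkeeping, where you compare local and global degrees at a single place while the paper invokes $[\F^{0}:\QQ]=\sum_{\pp\mid p}[\F^{0}_{\pp}:\QQ_p]$ to see that there is a unique prime of $\F^{0}$ above $p$, both landing at $\F^{0}_{v_0}\s F$.
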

\begin{proof}
Let $F$ be a finite extension of $\QQ_{p}$ for some prime number $p$. By \cite[Corollary p.44]{lang}, we have a number field $\F^{0}$ such that: $\F^{0}$ is dense in $F$; $F=\F^{0} \cdot \QQ_{p}$; and $[F:\QQ_{p}]=[\F^{0}:\QQ]$. Since $[\F^{0}:\QQ] = \sum_{\pp \mid p} [\F^{0}_{\pp}:\QQ_{p}]$, there exists a unique prime $\pp$ of $\F^{0}$ lying over $p$, and thus $\F^{0} \otimes_{\QQ} \QQ_{p} \s \F^{0}_{\pp} \s F$. By taking $v_{0}=\pp$, we complete the proof.
\end{proof}
\begin{lm} \label{existence of infinitely many prime}
For any prime $p$, there exist infinitely many odd primes $q$ such that $p$ splits completely over $\QQ(\sqrt{q^*})$, where $q^*=(-1)^{\frac{q-1}{2}}q$.
\end{lm}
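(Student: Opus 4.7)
The plan is to reduce the splitting condition in $\QQ(\sqrt{q^*})$ to a congruence condition on $q$, and then invoke Dirichlet's theorem on primes in arithmetic progressions.

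First I would recall the standard criterion for splitting in a quadratic field: for an odd prime $\ell$ coprime to the squarefree integer $d$, the prime $\ell$ splits in $\QQ(\sqrt{d})$ iff $\left(\frac{d}{\ell}\right)=1$, while $2$ splits in $\QQ(\sqrt{d})$ iff $d\equiv 1\pmod 8$. Note that for any odd prime $q$ the integer $q^{*}=(-1)^{(q-1)/2}q$ satisfies $q^{*}\equiv 1\pmod 4$, so $\QQ(\sqrt{q^{*}})$ has discriminant $q^{*}$ and is ramified only at $q$.

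Case $p$ odd. Here the key input is the "clean" form of quadratic reciprocity,
\[
\left(\frac{q^{*}}{p}\right)=\left(\frac{p}{q}\right),
\]
valid for distinct odd primes $p,q$. Thus $p$ splits completely in $\QQ(\sqrt{q^{*}})$ iff $p$ is a quadratic residue modulo $q$. The condition $\left(\frac{p}{q}\right)=1$ is equivalent to $q$ splitting in the quadratic field $\QQ(\sqrt{p^{*}})$, which, by a standard computation with the Kronecker symbol, is equivalent to $q$ lying in a specified union of arithmetic progressions modulo $|p^{*}|$ (or modulo $4p$). Since these progressions have $\gcd$ one with the modulus, Dirichlet's theorem produces infinitely many primes $q$ in them, and all but finitely many such $q$ are odd and distinct from $p$.

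Case $p=2$. Here $2$ splits completely in $\QQ(\sqrt{q^{*}})$ iff $q^{*}\equiv 1\pmod 8$. If $q\equiv 1\pmod 4$ then $q^{*}=q$, so the condition becomes $q\equiv 1\pmod 8$; if $q\equiv 3\pmod 4$ then $q^{*}=-q$, so the condition becomes $q\equiv 7\pmod 8$. In either subcase Dirichlet's theorem furnishes infinitely many such primes $q$.

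The only real content is invoking quadratic reciprocity to turn a condition on $q$ modulo $p$ into a congruence condition on $q$, after which Dirichlet's theorem finishes the argument; there is no serious obstacle.
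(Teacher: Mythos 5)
The paper's own proof of this lemma is a one-line invocation of Dirichlet's density theorem, so your write-up supplies the details the paper omits and follows the same overall route: reduce the splitting condition to a congruence on $q$ via quadratic reciprocity, then apply Dirichlet. The argument is correct, up to one misstatement in the odd case. You assert that $\left(\frac{p}{q}\right)=1$ is equivalent to $q$ splitting in $\QQ(\sqrt{p^*})$. But splitting of an odd unramified prime $q$ in $\QQ(\sqrt{p^*})$ is governed by $\left(\frac{p^*}{q}\right)$, which by the very reciprocity identity you quote equals $\left(\frac{q}{p}\right)$, not $\left(\frac{p}{q}\right)$; the field you meant is $\QQ(\sqrt{p})$. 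This slip does not affect the conclusion, since the point you actually need --- that $\left(\frac{p}{q}\right)=1$ cuts out a nonempty union of residue classes for $q$ modulo $4p$ --- still stands. A slightly quicker uniform finish is to take any prime $q\equiv 1\pmod{8p}$: then $q\equiv 1\pmod 4$ so $q^*=q$; the prime $2$ splits in $\QQ(\sqrt{q})$ because $q\equiv 1\pmod 8$; and for odd $p$ one has $\left(\frac{q^*}{p}\right)=\left(\frac{q}{p}\right)=1$ since $q\equiv 1\pmod p$. Dirichlet's theorem already furnishes infinitely many primes in this single progression.
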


\begin{proof}
This is an easy consequence of Dirichlet's density theorem.
\end{proof}

\begin{pro} \label{construction F and S}
Let $F$ be a $p$-adic field of characteristic $0$. For any positive integer $l$,  there exist a number field $\F$ and a finite set $T$ of finite places of $\F$, with the cardinality $l$, such that $\F_{v} \s F$ for all $v \in T$.
\end{pro}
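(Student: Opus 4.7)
The plan is to start from the number field $\F^0$ produced by Lemma \ref{construction F0}, which has a single finite place $v_0$ with $\F^0_{v_0} \cong F$, and then enlarge $\F^0$ by adjoining square roots so that $v_0$ splits into as many primes as we need, all with the same local completion.

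First I would pick an integer $k$ with $2^k \ge l$. Using Lemma \ref{existence of infinitely many prime} I would choose, inductively on $i = 1, \dots, k$, an odd prime $q_i$ such that $p$ splits completely in $\QQ(\sqrt{q_i^*})$ and such that $\sqrt{q_i^*}$ does not lie in the previously built field $\F^{(i-1)} := \F^0(\sqrt{q_1^*}, \dots, \sqrt{q_{i-1}^*})$. The second requirement can be met because Lemma \ref{existence of infinitely many prime} supplies infinitely many admissible $q_i$, while only finitely many of them can produce a $\sqrt{q_i^*}$ already in $\F^{(i-1)}$ (for example, one may require $q_i$ to be unramified in $\F^{(i-1)}$ but ramified in $\F^{(i-1)}(\sqrt{q_i^*})$, which forces a proper extension). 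Set $\F := \F^{(k)}$, so that $[\F : \F^0] = 2^k$.

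Next I would analyze how $v_0$ decomposes in $\F$. Since $p$ splits completely in $\QQ(\sqrt{q_i^*})$, the element $q_i^*$ is a square in $\QQ_p$, and hence $\sqrt{q_i^*} \in \QQ_p \subseteq F = \F^0_{v_0}$ for every $i$. Therefore at each step of the tower the local extension above $v_0$ is trivial, so the sum-of-local-degrees formula forces the global degree $2$ to come entirely from splitting. Iterating, $v_0$ splits into exactly $2^k$ primes in $\F$, each having completion canonically isomorphic to $F$. Picking any $l$ of these primes gives the desired set $T$.

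The only delicate point is the inductive choice of the primes $q_i$ guaranteeing that the global degree genuinely doubles at each step; this is what prevents the tower from collapsing and what ultimately produces $2^k$ distinct places over $v_0$. Everything else reduces to the elementary fact that a local degree $1$ combined with a global degree $2$ forces complete splitting.
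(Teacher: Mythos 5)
Your proof is correct and follows the same overall strategy as the paper: start from $\F^0$ and $v_0$ supplied by Lemma \ref{construction F0}, adjoin $r$ square roots of the form $\sqrt{q_i^*}$ chosen via Lemma \ref{existence of infinitely many prime}, observe that each $\sqrt{q_i^*}$ already lies in $\QQ_p\subseteq F=\F^0_{v_0}$ so that $v_0$ splits completely at every stage, and finally select $l$ of the resulting places.

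The one point where you are genuinely more careful than the paper is the claim $[\F:\F^0]=2^k$. The paper writes ``Pick a subset $\{q_1,\dots,q_r\}\subseteq T_0$\dots We note that $[\F:\F^0]=2^r$'' without justification, but this is not automatic for an arbitrary choice of distinct primes: $\F^0$ could already contain $\sqrt{q_i^*}$, or some product $\sqrt{q_i^* q_j^*}$, causing the tower to collapse. Your inductive choice of each $q_i$ subject to the extra constraint that $\sqrt{q_i^*}\notin\F^{(i-1)}$ (justified, e.g., by insisting that $q_i$ be unramified in $\F^{(i-1)}$, which excludes only finitely many candidates from the infinite pool that Lemma \ref{existence of infinitely many prime} provides) fills exactly this gap. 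The final count $|\{v: \F_v\simeq F\}|\geq 2^k$ hinges on this degree statement, so spelling it out is worthwhile. In short: same route, with the one implicit step in the paper made explicit and correct.
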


\begin{proof}
Given $F$, we fix a number field $\F^0$ and a place $v_0$ as defined in Lemma \ref{construction F0}, so that $\F^0_{v_0} \s F$. From Lemma \ref{existence of infinitely many prime}, we have infinitely many odd primes $q$ such that $v_0$ splits completely over $\F^{0}(\sqrt{q^*})$. Denote by $T_0$ the set of all such primes $q$. For any positive integer $l$, we choose a positive integer $r$ such that $2^{r} \geq l$. Pick a subset $\{q_1, \cdots, q_r \} \subseteq T_0$. Set $\F := \F^{0}(\sqrt{q_1^*}, \cdots, \sqrt{q_r^*})$. We note that $[\F:\F^0] = 2^r$. 
Since $v_0$ splits completely over $\F^0(\sqrt{q_i^*})$ for each $i$, we have $\F_v \s F$ for all $v | v_0$. So, the cardinality $| \lbrace v \mid \F_v \s F \rbrace|$ $\geq$ $2^r$ $\geq$ $l$. By taking $T$ to be any subset of $\lbrace v \vert \F_v \s F \rbrace $ with cardinality $l$, we complete the proof.
\end{proof}

The following theorem allows us to construct a number field and connected reductive groups with prescribed local behavior.
\begin{thm} \label{construction of G and G'}
Let $G$ be a connected reductive \textit{quasi-split} group over a $p$-adic field $F$ of characteristic $0$, and let $G'$ be an $F$-inner form of $G$ with respect to an $\bar{F}$-isomorphism $\varphi : G' \rightarrow G$. Then, there exist a number field $\F$, a non-empty finite set $S$ of finite places of $\F$, a connected reductive group $\G$ defined over $\F$ and its $\F$-inner form $\G'$ such that
\begin{itemize}
\item[(a)] for all $v \in S$, $\F_v \s F$, $\G_v \s  G$, and $\G'_v \s  G'$ over $\F_v$,
\item[(b)] for all $v \notin S$ including all the archimedean places, $\G_v \s \G'_v$ over $\F_v$,
\end{itemize}
where $\G_v$ and $\G'_v$ denote $\G \times_{\F} \F_{v}$ and $\G' \times_{\F} \F_{v}$, respectively.
\end{thm}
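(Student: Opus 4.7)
The overall strategy is to realize the local quasi-split group $G$ as the completion of a global quasi-split group $\G$ at carefully chosen places, and then produce the inner form $\G'$ from a global cohomology class that has the prescribed localizations. First I would invoke Proposition \ref{construction F and S} to obtain a number field $\F$ together with a finite set $T$ of finite places of $\F$ such that $\F_v \s F$ for each $v \in T$, leaving the cardinality $|T|$ to be pinned down below; by taking a suitable imaginary quadratic twist if necessary I may also arrange that $\F$ is totally complex, so that $H^1(\F_v, \G^{\ad}_v)=0$ at every archimedean place. Since a connected reductive quasi-split group over a field $k$ is determined by its based root datum together with a Galois action on it factoring through a finite quotient of $\Gal(\bar k/k)$, the based root datum of $G$ and the local Galois action at $F$ can be prescribed on decomposition subgroups at the places of $T$ by a standard Chebotarev/Krasner argument (possibly after enlarging $\F$ by a further finite extension; see \cite{kot86, pr94}). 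This yields a quasi-split $\F$-group $\G$ with $\G_v \s G$ over $\F_v$ for every $v \in S:=T$ and with $\G_v$ unramified, hence quasi-split, at all other finite places.

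Next, I would construct $\G'$ as an inner twist of $\G$ with the prescribed local behavior. By Section \ref{inner forms} and Proposition \ref{kot iso}, the local inner form $G'$ is specified by a class $\alpha \in H^1(F, G^{\ad}) \s A(G^{\ad})$, where $A(G^{\ad})$ is finite abelian. To globalize $\alpha$ I would invoke Kottwitz's global theorem for adjoint groups \cite{kot86}: the localization map
\[
H^1(\F, \G^{\ad}) \lra \bigoplus_{v} H^1(\F_v, \G^{\ad}_v)
\]
has image equal to the kernel of the natural summation map $\bigoplus_v H^1(\F_v, \G^{\ad}_v) \lra A(\G^{\ad})$, where the compatibility $A(\G^{\ad}_v)=A(G^{\ad})$ at $v \in S$ is used to transfer $\alpha$ to the right target. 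I would then demand that the prescribed local class be $\alpha$ at each $v \in S$ and trivial at every other place; the sum of these local contributions in $A(\G^{\ad})$ equals $|S| \cdot \alpha$, which vanishes precisely when $|S|$ is a multiple of the order of $\alpha$ in $A(G^{\ad})$.

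Consequently, I would fix $|S| = \mathrm{ord}(\alpha)$, a positive divisor of $|A(G^{\ad})|$, before applying Proposition \ref{construction F and S} with $l=|S|$. Kottwitz's theorem then supplies a global class in $H^1(\F, \G^{\ad})$ whose local components are $\alpha$ at each $v \in S$ and trivial elsewhere, and the associated inner twist $\G'$ of $\G$ satisfies $\G'_v \s G'$ over $\F_v$ for every $v \in S$ and $\G'_v \s \G_v$ for every other $v$, which is the required conclusion. The main obstacle in this plan is exactly the failure of literal surjectivity of the global-to-local map for $H^1$ of adjoint groups: the image is only the kernel of the summation map, which forces the arithmetic constraint on $|S|$. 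The flexibility in the cardinality of $T$ granted by Proposition \ref{construction F and S} is precisely what allows this constraint to be met, and once it is met the rest of the construction is formal.
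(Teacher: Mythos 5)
Your proposal follows essentially the same strategy as the paper: apply Proposition \ref{construction F and S} to produce $\F$ and a set of places localizing to $F$, globalize the quasi-split form to an $\F$-group $\G$, and then use the Kottwitz exact sequence (Lemma \ref{ko86}) to lift a 1-cocycle whose local components are the given class $\alpha$ at the places of $S$ and trivial elsewhere, the vanishing of the sum in $A(\G^{\ad})$ being guaranteed by the arithmetic constraint on $|S|$. Your choice $|S|=\mathrm{ord}(\alpha)$ (computed in $A(G^{\ad})$, which maps to $A(\G^{\ad}_v)$ for $v\in S$) is a slightly sharper but equally valid version of the paper's choice of $|S|$ as a multiple of $|A(\G^{\ad})|$, and your Chebotarev/Krasner remark about globalizing the quasi-split form is a useful elaboration of a step the paper treats tersely.
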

\begin{proof}
Let $l$ denote a sufficiently large multiple of the cardinality $|A( G^{\ad})|$ of $A( G^{\ad})$ (see Section \ref{inner forms} for the definition of $A( G^{\ad})$). From Proposition \ref{construction F and S}, we obtain a number field $\F$ and a finite set $T$ of places with the cardinality $l$ such that $\F_v \s F$ for all $v \in T$. We note that the set $A( G^{\ad})$ is a finite abelian group.

Next we choose a connected reductive quasi-split group $\G$ defined over $\F$. It follows that $\G_v := \G \times_{\F} \F_{v} \s  G$ over $\F_v$ for all $v \in T$. We note that $A(\G^{ad})$ is a finite abelian group, and there is a surjective homomorphism $\bigoplus_v A(\G^{\ad}_v) \rightarrow A(\G^{ad})$ for any place $v$ of $\F$ (see \cite[2.3]{kot86}). Here $\bigoplus_v$ denotes the subset of the direct product consisting of $(x_v)$ such that $x_v=1$ for all but a finite number of $v$. Since the integer $l = |T|$ is taken to be sufficiently large, we can assume that the cardinality $| A(\G^{\ad})|$ is smaller than $l$ due to the surjective homomorphism. Choose a subset $S$ of $T$ such that the cardinality $|S|$ equals a multiple of $| A(\G^{\ad})|$.

We recall the following lemma from \cite[Theorem 2.2 and Proposition 2.6]{kot86} and \cite[Theorem 6.22]{pr94}.
\begin{lm}  \label{ko86}
Let $\bar{\AA}$ denote the adele ring of $\bar{\F}$. Then, there is an exact sequence with a commutative diagram:
\[
\begin{CD}
1 @>>> H^1(\F, \G^{\ad}(\bar{\F})) @>\iota_\G>> H^1(\F, \G^{\ad}(\bar{\AA})) 
@>\beta_\G>>A(\G^{\ad}) @>>> 1 \\
@.      @.          @V \thickapprox VV   @A \Sigma AA  @.\\
@.      @.          \bigoplus_{v} H^1(\F_v,\G_v^{\ad}(\bar{\F}_v)) @>>> \bigoplus_v A(\G^{\ad}_v) 
\end{CD}
\]
\end{lm}
\noindent The bottom map is given by local maps defined in Proposition \ref{kot iso}, which are isomorphisms for all finite places of $\F$. Thus, the commutative diagram implies that the morphism $\beta_\G$ is equal to sum of local $1$-cocycles.

Since $G'$ is an $F$-inner form of $G$ with respect to an $\bar{F}$-isomorphism $\varphi : G' \rightarrow G$, we have a $1$-cocycle 
\[
\varphi _{\tau_v} \in H^1(\F_v,  \G^{\ad}_v(\bar{\F}_v))
\]
such that
$\varphi _{\tau_v} := \varphi \circ  {\tau_v}(\varphi)^{-1}$ for $\tau_v \in \Gal(\bar{\F}_v/\F_v)$. Let  
\begin{equation} \label{a_tau}
a_{\tau} :=(a_{\tau_v}) \in H^1(\F, \G^{\ad}(\bar{\AA}))
\end{equation}
be a nontrivial 1-cocycle such that 
$a_{\tau_v}=\vphi_{\tau_v}$ for all $v \in S$ and 
$a_{\tau_v}=1$ for all $v \notin S$. 
Since $A(\G^{\ad})$ is a finite abelian group and $|S|$ is a multiple of $|A(\G^{\ad})|$, we get
\[ 
\beta_\G(a_\tau)=\sum_{v}a_{\tau_v}= |S| \cdot \vphi_{\tau_v}=1.  
\]
From the exactness in Lemma \ref{ko86}, 
we have a nontrivial $1$-cocycle $b_\tau \in H^1(\F, \G^{\ad}(\bar{\F}))$ such that 
$\iota_\G (b_\tau)=a_{\tau}$. 
Thus, we obtain an $\F$-inner form $\G'$ of $\G$ associated to $b_\tau$. From the definition of $a_{\tau}$ in (\ref{a_tau}), it follows that $\G'_v \s  G'$ over $\F_v$ for all $ v \in S$, and $\G_v \s \G'_v$ over $\F_v$ for all $v \notin S$. This completes the proof of Theorem \ref{construction of G and G'}.
\end{proof}

\begin{rem} \label{brauer}
Theorem \ref{construction of G and G'} is an analogue of the well-known result:
\begin{equation*} \label{exact seq of brauers}
1 \longrightarrow H^2(\Gal(\bar{\F}/\F), \F^{\times}) 
\longrightarrow \bigoplus_{v} H^2(\Gal(\bar{\F}_v/\F_v), \F_{v}^{\times})
\overset{\sum \text{inv}_v}{\longrightarrow}
\QQ / \ZZ
\longrightarrow 0.  
\end{equation*}
To be precise, this exact sequence explains how to obtain a central division algebra over $\F$ from a given central division algebra over $F$. When an $F$-group $G$ satisfies condition (\ref{cond on G and G}), this is a manner to construct an $\F$-group $\G$ and its $\F$-inner form $\G'$ with prescribed local behavior. Theorem \ref{construction of G and G'} fully extends this notion to any connected reductive group over $F$.
\end{rem}

Given an irreducible unitary supercuspidal representation, the following proposition tells us how to construct a cuspidal automorphic representation with specified local behavior at a finite set of places.
\begin{pro} (\cite[Th\'{e}or\`{e}m, in Appendice 1]{he84}) \label{he84}
Let $\F$ be a global field, $\G$ a connected reductive group over $\F$, $Z(\G)$ its center, $\omega$ a unitary character of $Z(\G) (\F) \backslash  Z(\G)(\AA)$,  $S$ a nonempty finite set of finite places of $\F$ and, for $v \in S$, $\rho _v$ an irreducible unitary supercuspidal representation of $\G(\F_v)$ with central character $\omega_v$. Then, there exists a cuspidal automorphic representation $\pi=\otimes_v \pi_v$ of $\G(\AA)$ with central character $\omega$ such that $\pi_v \s \rho_v$ for all $v \in S$.
\end{pro}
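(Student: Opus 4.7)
The plan is to construct $\pi$ as an irreducible constituent of a Poincar\'e series built from matrix coefficients of the $\rho_v$, exploiting the fact that supercuspidal representations admit matrix coefficients that are compactly supported modulo the center. For each $v \in S$, fix a nonzero matrix coefficient $f_v$ of $\rho_v$; supercuspidality makes $f_v$ compactly supported modulo $Z(\G)(\F_v)$ and transforms under the center by $\omega_v^{-1}$. At each archimedean place pick a smooth, $K$-finite $f_v$ of compact support modulo center with central character $\omega_v^{-1}$, and at each remaining finite place pick $f_v$ to be a suitable bi-$K$-invariant function compatible with $\omega_v$ (a scaled unit of the spherical Hecke algebra at almost every place). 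Assemble $f = \prod_v f_v$ into a function on $\G(\AA)$ transforming under $Z(\G)(\AA)$ by $\omega^{-1}$ and compactly supported modulo $Z(\G)(\AA)$.

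Next, form the Poincar\'e series
\[
P_f(g) := \sum_{\gamma \in Z(\G)(\F)\backslash \G(\F)} f(\gamma g),
\]
which is locally finite by the compact support of $f$ modulo $Z(\G)(\AA)$ and hence descends to a function on $Z(\G)(\AA)\G(\F)\backslash \G(\AA)$ with central character $\omega$. Two properties need verification. Cuspidality: for any proper $\F$-parabolic subgroup $\mathbf{P} = \mathbf{M}\mathbf{N}$ of $\G$, unfolding the constant term $\int_{\mathbf{N}(\F)\backslash \mathbf{N}(\AA)} P_f(n g)\, dn$ reduces it to an integral involving the Jacquet-type integral $\int_{\mathbf{N}(\F_v)} f_v(n g)\, dn$ at some $v \in S$; this integral vanishes because matrix coefficients of supercuspidal representations have vanishing Jacquet integrals along every proper parabolic. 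Nontriviality: by concentrating each $f_v$ ($v \in S$) sufficiently near the identity within the matrix-coefficient class of $\rho_v$, and by tuning the auxiliary $f_v$'s, one arranges $P_f(e) \neq 0$ (equivalently, that some spectral projection of $P_f$ does not vanish).

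Finally, the cuspidal subspace of $L^2(Z(\G)(\AA)\G(\F)\backslash \G(\AA); \omega)$ generated by $P_f$ decomposes into irreducible cuspidal automorphic representations. At each $v \in S$, Schur orthogonality for matrix coefficients of supercuspidals implies that $f_v$ detects only the $\rho_v$-isotypic part of the regular representation of $\G(\F_v)$; hence every irreducible constituent $\pi = \otimes_v \pi_v$ that contributes nontrivially to $P_f$ satisfies $\pi_v \cong \rho_v$ for all $v \in S$. Selecting any such $\pi$ yields the cuspidal automorphic representation promised by the proposition. The main technical obstacle is guaranteeing, in one stroke, both nontriviality of $P_f$ and the correct local identification at \emph{every} $v \in S$; this is controlled by the compact-support-modulo-center property of supercuspidal matrix coefficients together with the orthogonality relations for their convolution action.
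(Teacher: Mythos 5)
The paper gives no proof of its own here; it states the result by citation to Henniart's Appendice~1, and your Poincar\'e-series construction from supercuspidal matrix coefficients is exactly Henniart's argument (compact support modulo the center, vanishing of Jacquet integrals for cuspidality, Schur orthogonality for the local identification at $v \in S$). The one point to tighten is the nonvanishing step: the support of a matrix coefficient of a fixed supercuspidal $\rho_v$ cannot be shrunk at will, so the concentration that forces $P_f(e)\neq 0$ should come entirely from an auxiliary finite place $v_0 \notin S$, where $f_{v_0}$ is taken to be the characteristic function of a sufficiently small compact open subgroup (normalized against $\omega_{v_0}$), using the discreteness of $\G(\F)$ modulo $Z(\G)(\AA)$ to isolate the term $\gamma = 1$ --- which is what your phrase about ``tuning the auxiliary $f_v$'s'' should be carrying.
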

\begin{rem}
If the set $S$ is chosen so that each group $\G_v$ is unramified over $\F_v$ at all finite place $v \notin S$, then each representation $\pi_v$ is unramified for all finite place $v \notin S$ by the choice of each $f_v$ in the proof of \cite[Th\'{e}or\`{e}m, in Appendice 1]{he84} (cf. \cite[Section 5]{sh90}).
\end{rem}
\subsection{Local and Global Compatibility in Restriction} \label{local and global compatibility}
In this section, we establish the local and global compatibility in the restriction of representations from a group to its subgroup sharing the same derived group. Let $F$ denote a $p$-adic field of characteristic $0$ with the ring of integers $\OO_F$.

\begin{pro} \label{restriction of unramified representation}
Let $G$ and $\widetilde{G}$ be unramified groups over $F$ such that
\[
G_{\der} = \widetilde{G}_{\der} \subseteq G \subseteq \widetilde{G}. 
\]
Given an unramified representation $\widetilde{\tau}$ of $\widetilde{G}(F)$, its restriction $\widetilde{\tau} |_{G(F)}$ to $G(F)$ has a unique unramified constituent with respect to $G(\OO_F)$.
\end{pro}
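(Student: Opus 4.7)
The plan is to realize $\widetilde{\tau}$ inside an unramified principal series of $\widetilde{G}(F)$ and then exploit Mackey restriction to control both the ambient space of $K$-fixed vectors and the dimension count, where $K := G(\OO_F)$ and $\widetilde{K} := \widetilde{G}(\OO_F)$ are compatible hyperspecial maximal compact subgroups with $K = \widetilde{K} \cap G(F)$. First I would fix a Borel subgroup $\widetilde{B} = \widetilde{T} U$ of $\widetilde{G}$ and the corresponding Borel $B = TU$ of $G$ with $T = \widetilde{T} \cap G$. By the Borel--Casselman description of unramified representations, there exists an unramified character $\chi$ of $\widetilde{T}(F)$ such that $\widetilde{\tau}$ occurs as the unique spherical subquotient of $\widetilde{I} := \mathrm{Ind}_{\widetilde{B}(F)}^{\widetilde{G}(F)}(\chi)$, and $\dim \widetilde{\tau}^{\widetilde{K}} = 1$.

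The key identification is
\[
\widetilde{I}|_{G(F)} \;\cong\; I \;:=\; \mathrm{Ind}_{B(F)}^{G(F)}(\chi|_{T(F)}).
\]
This follows from Mackey's theorem once one observes that the hypothesis $\widetilde{G}_{\der} = G_{\der}$ forces $\widetilde{G}/G$ to be a torus, so $\widetilde{G}(F) = \widetilde{B}(F) \cdot G(F)$ and the double coset space $\widetilde{B}(F) \backslash \widetilde{G}(F) / G(F)$ reduces to a single point; the modulus characters match because $\widetilde{B}$ and $B$ share the same unipotent radical $U$.

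With this identification in hand, I would bound $\dim \widetilde{\tau}^K$ from both sides. The standard Iwasawa decomposition $G(F) = B(F) K$ yields $\dim I^K = 1$ (the spherical function is determined by its value at the identity). Writing $\widetilde{\tau} = V/W$ as a subquotient of $\widetilde{I}$ and using the exactness of the $K$-invariants functor on smooth representations (via averaging over the compact open subgroup $K$), one obtains a surjection $V^K \twoheadrightarrow \widetilde{\tau}^K$ with $V^K \subseteq \widetilde{I}^K = I^K$, and hence $\dim \widetilde{\tau}^K \leq 1$; the containment $K \subseteq \widetilde{K}$ gives $\dim \widetilde{\tau}^K \geq \dim \widetilde{\tau}^{\widetilde{K}} = 1$. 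Therefore $\dim \widetilde{\tau}^K = 1$. Invoking Tadi{\'c}'s multiplicity-free decomposition $\widetilde{\tau}|_{G(F)} = \bigoplus_i \sigma_i$ (Proposition \ref{prop 2.2 tadic} together with Remark \ref{remark for lifting}) and taking $K$-invariants gives $1 = \sum_i \dim \sigma_i^K$, so exactly one $\sigma_i$ is unramified with respect to $K$, establishing both existence and uniqueness.

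The main obstacle is securing the upper bound $\dim \widetilde{\tau}^K \leq 1$: a priori the restriction $\widetilde{\tau}|_{G(F)}$ may decompose into many pieces of considerable length, and only after embedding the problem in the ambient principal series $I$, where Iwasawa forces a one-dimensional spherical space, does the bound become accessible. A secondary technical point is that $K = \widetilde{K} \cap G(F)$ is indeed hyperspecial in $G(F)$; this follows from the unramifiedness hypothesis, which furnishes compatible smooth reductive $\OO_F$-integral models of $G$ and $\widetilde{G}$ along the inclusion $G \hookrightarrow \widetilde{G}$.
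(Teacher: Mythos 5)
Your argument follows essentially the same route as the paper's proof: fix compatible Borel subgroups $\widetilde{B}=\widetilde{T}U$ and $B=TU$, realize $\widetilde{\tau}$ inside the unramified principal series $\widetilde{I}$ of $\widetilde{G}(F)$, relate $\widetilde{I}|_{G(F)}$ to the unramified principal series $I$ of $G(F)$ by restriction of functions (using $\widetilde{G}(F)=\widetilde{B}(F)G(F)$), and then exploit $\dim I^{K}=1$. The differences are technical rather than structural. The paper invokes Casselman's \cite[Proposition 2.6]{ca80} to get $\widetilde{\tau}$ as an actual \emph{subrepresentation} of $\widetilde{I}$, after which an injection $\widetilde{I}|_{G(F)}\hookrightarrow I$ suffices; you instead allow $\widetilde{\tau}$ to be a subquotient of $\widetilde{I}$, compensate with exactness of $(-)^{K}$ for the compact open $K$, and upgrade the restriction map to a Mackey isomorphism $\widetilde{I}|_{G(F)}\cong I$. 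Your version is slightly more self-contained in that it does not need the precise form of Casselman's embedding theorem, only the weaker fact that unramified representations are spherical subquotients of unramified principal series. One small overstatement: you invoke a ``multiplicity-free decomposition'' of $\widetilde{\tau}|_{G(F)}$ from Tadi\'c, but \cite[Lemma 2.1]{tad92} only gives a finite direct sum with multiplicities; this does not affect the conclusion, since once $\dim\widetilde{\tau}^{K}=1$ the equality $1=\sum_i m_i\dim\sigma_i^K$ already forces a unique spherical constituent appearing with multiplicity one. Note also that the identity $\widetilde{G}(F)=\widetilde{B}(F)\cdot G(F)$ that you assert via ``$\widetilde{G}/G$ is a torus'' is the same unelaborated step the paper takes as $\widetilde{G}(F)=\widetilde{T}(F)G(F)$, so this is not a gap specific to your write-up.
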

\begin{proof}
Fix a Borel subgroup $\widetilde{B}=\widetilde{T} U$ of $\widetilde{G}$. Then we have a Borel subgroup $B=\widetilde{B} \cap G = (\widetilde{T} \cap G) U$ of $G$. Write $T=\widetilde{T} \cap G$. From \cite[Proposition 2.6]{ca80}, we have a $\widetilde{G}(F)$-embedding of $\widetilde{\tau}$ into an unramified principal series $\ii_{\widetilde{G}, \widetilde{B}} \tlchi$, where $\tlchi$ is an unramified character of $\widetilde{T}(F)$. Consider the restriction  
$(\ii_{\widetilde{G}, \widetilde{B}} \tlchi) | _{G(F)}$. We note that $\widetilde{G}(F) = \widetilde{T}(F) G(F)$, and $f(\tilde{t}g)= \tlchi(\tilde{t})f(g)$ 
for $\tilde{t} \in \widetilde{T}(F)$ and $g \in G(F)$. It follows that if $f | _{G(F)} = 0$ for $f \in \ii_{\widetilde{G}, \widetilde{B}} \tlchi$, then $f = 0$. By sending $f \mapsto f | _{G(F)}$, we thus have a $G(F)$-equivariant embedding
\begin{equation*} \label{equ restriction of unramified rep}
\ii_{\widetilde{G}, \widetilde{B}} \tlchi
\hookrightarrow
\ii_{G, B} \chi.
\end{equation*}
Here $\chi$ is the restriction of $\tlchi$ to $T(F)$. 
Since $\chi$ is an unramified character of $T(F)$, we note that 
$\ii_{G, B} \chi$ has 
a unique non-trivial spherical vector with respect to $G(\OO_F)$ up to scalar. Hence, there is exactly one irreducible constituent of $\ii_{G, B} \chi$ which contains the unique non-trivial spherical vector. This completes the proof.
\end{proof}

Let $\F$ denote a number field, and let $\AA$ denote the adele ring of $\F$. Let $\G$ and $\wG$ be connected reductive groups over $\F$ such that
\[
\wG_{\der} = \G_{\der} \subseteq \G \subseteq \wG.
\] 
Let $\widetilde{\pi} = \otimes_v \widetilde{\pi}_v$ be an irreducible admissible representation of $\wG(\AA)$.
Proposition \ref{restriction of unramified representation} implies that $\widetilde{\pi}_v |_{\G(\F_v)}$ has 
a unique unramified constituent (denoted by $\pi^0_v$) with respect to $\G(\OO_v)$ for almost all places $v$, where $\OO_v$ is the ring of integers of $\F_v$. 
The following proposition states the local and global compatibility in the restriction which is an analogue of \cite[Lemma 1]{lan79}.
\begin{pro} \label{restriction of global rep}
Every irreducible constituent of $\widetilde{\pi}|_{\G(\AA)}$ is of the form 
$\pi = \otimes_v \pi_v$, where $\pi_v$ is an irreducible constituent of 
$\widetilde{\pi}_v |_{\G(\F_v)}$ and $\pi_v \s \pi^0_v$ for almost all $v$.
\end{pro}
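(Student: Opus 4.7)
The plan is to adapt Langlands' argument in \cite{lan79}, Lemma 1, to the current restriction setting. First I would write $\widetilde{\pi} = \bigotimes_v{}' \widetilde{\pi}_v$ as a restricted tensor product in the sense of Flath, with respect to a family of distinguished vectors $\widetilde{\xi}_v^0 \in \widetilde{\pi}_v$ chosen so that $\widetilde{\xi}_v^0$ is a non-zero $\wG(\OO_v)$-fixed vector for almost all $v$. Because $\wG_{\der} = \G_{\der} \subseteq \G$, such a $\widetilde{\xi}_v^0$ is automatically fixed by $\G(\OO_v)$, so it also serves as a spherical vector for $\G(\F_v)$ at almost all $v$.

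Second, I would invoke Tadi\'c's theorem \cite[Proposition 2.4]{tad92} (the source of Proposition \ref{prop 2.2 tadic}) to decompose each local restriction as a finite direct sum of irreducible admissible representations
\[
\widetilde{\pi}_v|_{\G(\F_v)} = \bigoplus_{i \in I_v} \pi_{v,i}.
\]
Proposition \ref{restriction of unramified representation} then singles out, for almost all $v$, a unique constituent $\pi_v^0 = \pi_{v, i_v^0}$ which is unramified with respect to $\G(\OO_v)$. Since the $\G(\OO_v)$-fixed line of $\widetilde{\pi}_v$ is one-dimensional and must lie in a single isotypic summand, one deduces that $\widetilde{\xi}_v^0 \in \pi_v^0$ at all such places.

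Third, I would assemble these local data into a direct sum decomposition of the global restriction. The fact that $\widetilde{\xi}_v^0 \in \pi_v^0$ at almost all $v$ means that the distinguished vectors used in the restricted tensor product sit inside a single local constituent, so the restricted tensor product structure on $\widetilde{\pi}$ transfers to
\[
\widetilde{\pi}|_{\G(\AA)} \;=\; \bigoplus_{(i_v)} \bigotimes_v{}' \pi_{v, i_v},
\]
where $(i_v)$ runs over sequences in $\prod_v I_v$ satisfying $i_v = i_v^0$ for almost all $v$, and where the restricted tensor product on the right is taken relative to $\widetilde{\xi}_v^0 \in \pi_v^0$ at such places. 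Each summand is an irreducible admissible $\G(\AA)$-representation by Flath's irreducibility theorem applied to $\G(\AA)$, and thus every irreducible $\G(\AA)$-constituent of $\widetilde{\pi}|_{\G(\AA)}$ coincides with exactly one such summand. This is the desired factorization $\pi = \otimes_v \pi_v$ with $\pi_v \simeq \pi_v^0$ for almost all $v$.

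The main obstacle will be the third step, namely verifying that the finite local direct sum decompositions are compatible with the restricted tensor product structure on $\widetilde{\pi}$. The key point, which has to be checked carefully, is that $\widetilde{\xi}_v^0$ genuinely lies in the single summand $\pi_v^0$ at almost all $v$ (and not merely projects to it), so that the family $\{\widetilde{\xi}_v^0\}$ serves as a system of distinguished vectors for each restricted tensor factor $\bigotimes_v{}' \pi_{v, i_v}$. This is exactly the content of the uniqueness statement in Proposition \ref{restriction of unramified representation} combined with the finiteness of $I_v$ from Tadi\'c. Once this compatibility is in hand, the remaining assertions follow formally from the usual Flath-type decomposition theory.
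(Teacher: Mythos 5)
Your argument is correct, but it takes a genuinely different route from the paper's. The paper follows Langlands' proof of \cite[Lemma 1]{lan79} essentially verbatim: it starts with an arbitrary irreducible constituent $\pi$ acting on a quotient $V/U$ of a subspace of $X = \otimes_v X_v$, carefully isolates a finite set $T$ of places and a vector $x_T \otimes (\otimes_{v\notin T} x_v^0)$ lying in $V$ but not $U$, and then peels off the local factors one place at a time using the surjection $V_T \otimes (\otimes_{v\notin T} V_v) \to V/U$. It never asserts a complete decomposition of $\widetilde{\pi}|_{\G(\AA)}$; it only analyzes one constituent at a time. Your approach instead establishes the full direct sum decomposition
\[
\widetilde{\pi}|_{\G(\AA)} = \bigoplus_{(i_v)} \bigotimes_v{}' \pi_{v,i_v},
\]
from which the proposition drops out immediately. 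The key point you correctly identify is that the distinguished vector $\widetilde{\xi}_v^0$ must lie in a single summand $\pi_v^0$ at almost all $v$; this follows because, under the $\G(\F_v)$-embedding of $\widetilde{\pi}_v$ into the unramified principal series $\ii_{G,B}\chi$ from the proof of Proposition \ref{restriction of unramified representation}, the $\G(\OO_v)$-fixed line is one-dimensional, so $\widetilde{\xi}_v^0$ projects nontrivially to only one summand and hence belongs to it. Given that, the transition maps defining the restricted tensor product respect the local direct sum decompositions, and the colimit decomposes as claimed. What the paper's Langlands-style argument buys is a shorter proof that avoids verifying the compatibility of restricted tensor products with the local direct sums; what yours buys is a stronger and cleaner statement, namely the complete structure of the restriction rather than merely a description of its constituents. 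Both are legitimate proofs of the proposition.
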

\begin{proof}
We follow the proof of \cite[Lemma 1]{lan79}. It is clear that any representation of the above form $\pi = \otimes_v \pi_v$ is an irreducible constituent of $\widetilde{\pi}|_{\G(\AA)}$. Conversely, let the constituent 
$\pi$ act on $V/U$ with $0 \subseteq U \subseteq V \subseteq X=\otimes_v X_v$. To construct the tensor product, we choose a finite set $T_0$ of places and a non-zero spherical vector $x_0$ with respect to $\G(\OO_v)$ for each $v \notin T_0$. 
We can find a finite set $T$ of places and a vector $x_T \in X_T:=\otimes_{v \in T} X_v$ such that $T$ contains $T_0$ and $x = x_T \otimes(\otimes_{v \notin T}x_v^0)$ lies in $V$ but not in $U$.
Let $V_T$ be the smallest subspace of $X_T$ containing $x_T$ and invariant under 
$\G_T:= \Pi _{v \in T} \G(\F_v)$. There is a surjective map
\[
V_T \otimes (\otimes_{v \notin T} V_v) \longrightarrow V/U.
\]
If $v_0 \notin T$, then the kernel contains $V_T \otimes U_{v_0} \otimes(\otimes_{v \notin T \cup \lbrace v_0 \rbrace} V_v)$.
We obtain a surjection $V_T \otimes (\otimes_{v \in T} V_v / U_v) \rightarrow V/U$ with a kernel of the form $U_T \otimes (\otimes_{v \in T} V_v / U_v)$, where $U_T$ lies in $V_T$. We note from \cite[Lemma 2.1]{tad92} that $\widetilde{\pi}_v |_{\G(\F_v)}$ is a finite direct sum of irreducible constituents of $\G(\F_v)$. So, the representation of $\G_T$ on $V_T / U_T$ is irreducible and is of the form 
$\otimes_{v \in T} \pi_v$, where $\pi_v$ is an irreducible constituent of $\widetilde{\pi}_v |_{\G(\F_v)}$. Thus, the constituent $\pi$ is of the form $\otimes_v \pi_v$ such that $\pi_v \s \pi^0_v$ for $v \notin T$.
\end{proof}

\begin{rem} \label{restriction of cuspidal rep}
Let $\widetilde{\pi}$ be a cuspidal automorphic representation
of $\wG(\AA)$. Since $\G(\AA)$ is a subgroup of $\wG(\AA)$ sharing the same derived group,
the restriction $\mathcal{H}(\widetilde{\pi})|_{\G(\AA)}$ of the space $\mathcal{H}(\widetilde{\pi})$ 
from $\wG(\AA)$ to $\G(\AA)$ is still a non-zero subspace of cusp forms on $\G(\AA)$.
Due to the result \cite[Theorem 2.1 p.113]{ckm03} of Gelfand and Piatetski-Shaphiro, we have the decomposition 
\[
\widetilde{\pi}|_{\G(\AA)} = \oplus m_{\pi} \pi.
\]
Here $\pi$ runs through all irreducible constituents of the restriction $\widetilde{\pi}|_{\G(\AA)}$ which are cuspidal automorphic representations of $\G(\AA)$, and $m_{\pi}$ is the multiplicity of $\pi$.
\end{rem}

\begin{rem} \label{local rest}
Let $\pi = \otimes \pi_v$ be a cuspidal automorphic representation  of $\G(\AA)$. Theorem 4.13 in \cite{hs11} verifies that there exists a cuspidal automorphic representation $\widetilde{\pi} = \otimes \widetilde{\pi}_v$ of $\wG(\AA)$ such that $\pi$ is an irreducible constituent of $\widetilde{\pi}|_{\G(\AA)}$. From Proposition \ref{restriction of global rep}, we note that $\pi_v$ is an irreducible constituent of $\widetilde{\pi}_v|_{\G(\F_v)}$ for all $v$.
\end{rem}
\section{Transfer for Unitary Supercuspidal Representations under the Local JL-Type Correspondence}  \label{main thms}
Using a local to global argument in Section \ref{local to global}, we prove that Plancherel measures attached to unitary supercuspidal representations are preserved under the local JL-type correspondence, assuming a working hypothesis that Plancherel measures are invariant on a certain finite set.
 Throughout Section \ref{main thms}, $F$ denotes a $p$-adic field of characteristic $0$, and $M$ denotes an $F$-Levi subgroup of a connected reductive $F$-split group $G$ such that
\begin{equation} \label{cond on M in body}
\Pi_{i=1}^{r} SL_{n_i} \subseteq M \subseteq \Pi_{i=1}^{r} GL_{n_i} 
\end{equation}
for positive integers $r$ and $n_i$. Let $G'$ be an $F$-inner form of $G$, and let $M'$ be an $F$-Levi subgroup of $G'$ that is an $F$-inner form of $M$. Then, $M'$ satisfies the following property
\begin{equation*} \label{cond on M' in body}
\Pi_{i=1}^{r} SL_{m_i}(D_{d_i}) \subseteq M'(F) \subseteq \Pi_{i=1}^{r} GL_{m_i}(D_{d_i}).
\end{equation*}
Here $D_{d_i}$ denotes a central division algebra of dimension $d_i^2$ over $F$ where $n_i = m_i d_i$. 
Write $\tM(F) = \Pi_{i=1}^{r} GL_{n_i}(F)$ and $\tM'(F) = \Pi_{i=1}^{r} GL_{m_i}(D_{d_i})$. 

We denote by $\theta$ and $\theta'$ the subsets of $\Delta$ and $\Delta'$ such that $M=M_{\theta}$ and $M=M_{\theta'}$, respectively. We fix representatives $w \in G(F)$ and $w' \in G'(F)$ of $\tilde{w} \in W_G$ and $\tilde{w} \in W_{G'}$ such that $\tilde{w} = \varphi(\tilde{w}')$, $\tilde{w}(\theta) \subseteq \Delta$ and $\tilde{w}'(\theta') \subseteq \Delta'$ as stated in Section \ref{inner forms}.
\subsection{Statement of Theorem}
In this section, we state the main result and its contributions. Let $\sigma \in \e(M(F))$ and $\sigma' \in \e(M'(F))$ be under the local JL-type correspondence. Since both $\sigma$ and $\sigma'$ are supercuspidal, by Remark \ref{remark for sc to sc} and Definition \ref{def of LJL-type}, we have $\tsigma \in \e(\tM(F))$ and $\tsigma' \in \e(\tM'(F))$ such that
\begin{itemize}
  \item[(a)] $\sigma$ and $\sigma'$ are isomorphic to irreducible constituents of the restrictions $\tsigma |_{M(F)}$ and $\tsigma' |_{M'(F)}$, respectively,
  \item[(b)] $\mathbf{C}(\tlsigma) = \tlsigma'$.
\end{itemize}
We recall the sets $\Pi_{\tsigma}(M(F))$ and $\Pi_{\tsigma'}(M'(F))$ of equivalence classes of all irreducible constituents of $\tsigma|_{M(F)}$ and $\tsigma'|_{M'(F)}$, respectively. 

\begin{wh}  \label{whyp}
Let $\sigma'_1$ and $\sigma'_2$ be given in $\Pi_{\tsigma'}(M'(F))$. Then, we have 
\[
\mu_{\bM'}(\nu', \sigma'_1, w') = \mu_{\bM'}(\nu', \sigma'_2, w')
\]
for any $\nu' \in \ma^{*}_{\bM', \CC}$.
\end{wh}
\begin{rem}
Since $\Pi_{\tsigma'}(M'(F))$ can be considered as an $L$-packet on $M'$ (cf. Remark \ref{rem of l-packet}), this hypothesis is related to the $L$-packet invariance of the Plancherel measure. When $G'$ is an $F$-inner form of $SL_n$ and $M'$ is any $F$-Levi subgroup of $G'$, Working Hypothesis \ref{whyp} is a consequence of Proposition \ref{Compatibility}.
\end{rem}
The following states our main result.
\begin{thm} \label{relation bw JL-type and PM}
Let $\sigma \in \e(M(F))$ and $\sigma' \in \e(M'(F))$ be under the local JL-type correspondence. Assume that Working Hypothesis \ref{whyp} is valid. Then, we have
\[
\mu_{\bM}(\nu, \sigma, w) = \mu_{\bM'}(\nu, {\sigma}', w')
\]
for any $\nu \in \ma^{*}_{\bM, \CC} \s \ma^{*}_{\bM', \CC}$.
\end{thm}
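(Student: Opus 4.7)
The plan is to run the generalized local-to-global argument outlined in the introduction: globalize everything, exploit the global functional equation for intertwining operators to cancel all factors outside a finite set $S$, and use Working Hypothesis \ref{whyp} to turn the resulting product identity into a pointwise identity.

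First, I would apply Theorem \ref{construction of G and G'} to the local data $(F,G,G',M,M',\tM,\tM')$ to produce a number field $\F$, a finite set $S$ of finite places with $\F_v \simeq F$, $\G_v \simeq G$, $\G'_v \simeq G'$ for $v \in S$, and $\G_v \simeq \G'_v$ for $v \notin S$; the Levi subgroups $\M,\M',\wM,\wM'$ of the global groups are chosen to match $M,M',\tM,\tM'$ at $v \in S$. Next I apply Proposition \ref{existence pro intro} to produce a finite $V \supseteq S$ (containing the archimedean places) and cuspidal automorphic representations $\pi=\otimes \pi_v$ of $\M(\AA)$ and $\pi'=\otimes \pi'_v$ of $\M'(\AA)$ with: $\pi_v \simeq \sigma$ and $\pi'_v \in \Pi_{\tsigma'}(M'(F))$ at $v \in S$; $\pi_v,\pi'_v$ components of restrictions of the same representation of $\wM(\F_v)$ at $v \in V \setminus S$; and $\pi_v \simeq \pi'_v$ unramified for $v \notin V$.

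Second, I would write the global functional equation, coming from the theory of Eisenstein series, for the global standard intertwining operator associated to $\pi$ and to $\pi'$. Decomposing into local factors yields the identities (\ref{identity of intertwining operators of M}) and (\ref{identity of intertwining operators of M'}) alluded to in the introduction, each being a product over all places of local Plancherel measures, with the unramified places contributing ratios of local Langlands $L$-functions. Since at $v \notin V$ the groups and representations coincide, those $L$-factors match and cancel when we divide one identity by the other. At archimedean $v \in V \setminus S$ the Plancherel measure is preserved under inner twists by the real case (Arthur), and at the finite places in $V \setminus S$ the groups are equal and the representations $\pi_v,\pi'_v$ are constituents of a common restriction, so Proposition \ref{Compatibility} (compatibility with restriction) forces the Plancherel measures to agree. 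After this cancellation we retain only the factors at $v \in S$, giving
\begin{equation*}
\prod_{v \in S} \mu_{\M_v}(\nu,\pi_v,w) \;=\; \prod_{v \in S} \mu_{\M'_v}(\nu,\pi'_v,w').
\end{equation*}

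Third, by construction $\pi_v \simeq \sigma$ for every $v \in S$, so the left-hand side is $\mu_M(\nu,\sigma,w)^{|S|}$. On the right, each $\pi'_v$ lies in $\Pi_{\tsigma'}(M'(F))$, so Working Hypothesis \ref{whyp} replaces each factor by $\mu_{M'}(\nu,\sigma',w')$, producing $\mu_{M'}(\nu,\sigma',w')^{|S|}$. Thus
\begin{equation*}
\mu_M(\nu,\sigma,w)^{|S|} \;=\; \mu_{M'}(\nu,\sigma',w')^{|S|}
\end{equation*}
as meromorphic functions on $\ma^{*}_{\M,\CC} \simeq \ma^{*}_{\M',\CC}$. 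To extract the pointwise identity I would restrict $\nu$ to the unitary axis $\sqrt{-1}\,\ma^{*}_M$, where by Remark \ref{useful rmks on pm} both sides are holomorphic and non-negative real; taking the real positive $|S|$-th root gives the equality there, and analytic continuation extends it to all of $\ma^{*}_{\M,\CC}$.

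The main obstacle, and the one that forces the working hypothesis, is precisely the step where the $\pi'_v$ for $v \in S$ are only known to lie in $\Pi_{\tsigma'}(M'(F))$ rather than to equal a prescribed $\sigma'$: Henniart's existence theorem and the global Jacquet--Langlands correspondence control $\tsigma'$ but not which constituent of its restriction appears locally. Without \ref{whyp} the argument only yields an identity of products over possibly distinct members of $\Pi_{\tsigma'}(M'(F))$; with it, the identity collapses to the desired one.
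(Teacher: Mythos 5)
Your proposal follows the paper's own proof essentially step-by-step: globalize via Theorem \ref{construction of G and G'} and Proposition \ref{existence pro}, apply the Eisenstein functional equation to get the two product identities, cancel the unramified $c_v$ factors outside $V$, match the factors at $v \in V \setminus S$, invoke Working Hypothesis \ref{whyp} at $v \in S$ to get $\mu_M^{|S|} = \mu_{M'}^{|S|}$, and extract the $|S|$-th root using holomorphy and non-negativity on the unitary axis. One mild variation worth noting: at finite places of $V \setminus S$ you appeal directly to Proposition \ref{Compatibility} (both $\pi_v$ and $\pi'_v$ are constituents of $\widetilde{\tau}_v|_{\M_v}$ and $\M_v \simeq \M'_v$), whereas the paper cites Keys--Shahidi's Artin-factor formula; your route is the more economical one and gives the same conclusion. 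Two small bookkeeping points you elided, which the paper does spell out: the preliminary reduction (via Remarks \ref{useful rmks on pm} and \ref{remark for unitary and max}) to $G$ semisimple and $M$ maximal so that $\ma^{*}_{M,\CC} \simeq \CC$, and the check that $\gamma_{\tilde w}(\G_v|\M_v) = \gamma_{\tilde w}(\G'_v|\M'_v)$ (from Arthur--Clozel) so that dividing the two identities really leaves only the $\mu$'s over $S$. Also, at archimedean $v \in V \setminus S$ the groups coincide, so the point is not that Plancherel measures are preserved under inner twists but that Arthur's archimedean formula depends only on the $L$-parameter of $\widetilde{\tau}_v$; the conclusion you draw is right, the justification is slightly misphrased.
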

\begin{rem}
If the central character of $\tsigma'$ is unramified and the set $\Pi_{\tsigma'}(M'(F))$ is associated to a tempered and tame regular semi-simple elliptic $L$-parameter, the assumption is no longer needed due to \cite{choiy2}. 
\end{rem}
We note that any two members $\tau \in \Pi_{\tsigma}(M(F))$ and $\tau' \in \Pi_{\tsigma'}(M'(F))$ are under the JL-type correspondence (see Remark \ref{an rem for JL-type}). Fix $\sigma' \in \e(M'(F))$ in Theorem \ref{relation bw JL-type and PM}. By varying $\sigma$ over $\Pi_{\tsigma}(M(F))$, we have the following.
\begin{pro}  \label{L-packet invariance of PM}
Let $\tau_1$ and $\tau_2$ be given in $\Pi_{\tsigma}(M(F))$. Assume that Working Hypothesis \ref{whyp} is valid. Then, we have
\[
\mu_M(\nu, \tau_1, w) = \mu_M(\nu, \tau_2, w)
\]
for all $\nu \in \mathfrak{a}^{*}_{M, \CC}$.
\end{pro}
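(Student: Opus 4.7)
The plan is to transport the invariance statement on $M$ to the corresponding (hypothesized) statement on $M'$ through Theorem \ref{relation bw JL-type and PM}. First, I would fix an arbitrary representative $\sigma' \in \Pi_{\tsigma'}(M'(F))$, which exists since $\tsigma' = \mathbf{C}(\tsigma) \in \e(\tM'(F))$ is non-empty and, by Remark \ref{remark for sc to sc}, every irreducible constituent of $\tsigma'|_{M'(F)}$ is supercuspidal, so $\Pi_{\tsigma'}(M'(F)) \subseteq \e(M'(F))$.

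Next, given $\tau_1, \tau_2 \in \Pi_{\tsigma}(M(F))$, I would invoke Remark \ref{an rem for JL-type} to observe that each pair $(\tau_i, \sigma')$ is under the local JL-type correspondence in the sense of Definition \ref{def of LJL-type}: both $\tau_i$ are constituents of $\tsigma|_{M(F)}$, $\sigma'$ is a constituent of $\tsigma'|_{M'(F)}$, and $\mathbf{C}(\tsigma) = \tsigma'$ by construction of $\sigma'$. Since Working Hypothesis \ref{whyp} is assumed to hold, the hypotheses of Theorem \ref{relation bw JL-type and PM} are satisfied for both pairs $(\tau_1, \sigma')$ and $(\tau_2, \sigma')$.

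Applying Theorem \ref{relation bw JL-type and PM} to each of these pairs then yields, for all $\nu \in \ma^{*}_{\bM, \CC} \s \ma^{*}_{\bM', \CC}$,
\[
\mu_{\bM}(\nu, \tau_1, w) \;=\; \mu_{\bM'}(\nu, \sigma', w') \;=\; \mu_{\bM}(\nu, \tau_2, w),
\]
which is the desired equality. No additional obstacle arises here since all the work has been absorbed into Theorem \ref{relation bw JL-type and PM}; the only delicate point to check is that the same $\sigma'$ can serve as the common ``transfer target'' for both $\tau_i$, and this is precisely what Remark \ref{an rem for JL-type} guarantees.
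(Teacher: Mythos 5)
Your proposal is correct and takes essentially the same approach as the paper: fix a single $\sigma' \in \Pi_{\tsigma'}(M'(F))$, note via Remark \ref{an rem for JL-type} that each $(\tau_i, \sigma')$ is under the local JL-type correspondence, and apply Theorem \ref{relation bw JL-type and PM} twice to conclude $\mu_{M}(\nu,\tau_1,w) = \mu_{M'}(\nu,\sigma',w') = \mu_{M}(\nu,\tau_2,w)$.
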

\begin{rem} \label{invariant rmk}
Since $\Pi_{\tsigma}(M(F))$ can be considered as an $L$-packet on $M$ (cf. Remark \ref{rem of l-packet}), Proposition \ref{L-packet invariance of PM} supports the conjecture that Plancherel measures are invariant on $L$-packets. We also refer the reader to \cite{art11, choiy2, gtsp10, gt, gtan12} for other cases.
\end{rem}
\begin{rem} \label{remark for generic L-packet}
Proposition \ref{L-packet invariance of PM} reduces the study of the Plancherel measures for tempered representations to the generic cases. To be precise, since the unitary supercuspidal representation $\tsigma$ of $\tM(F)$ is generic with respect to a generic character $\psi$ \cite[Section 2.3]{ku94}, there exists a unique generic representation $\tau_0 \in \Pi_{\tsigma}(M(F))$ with respect to $\psi$ (cf. \cite[Proposition 2.8]{tad92}). Then, the result \cite[Corollary 3.6]{sh90} of Shahidi gives an explicit formula of $\mu_M(\nu, \tau_0, w)$ in terms of local factors via the Langlands-Shahidi method. Therefore, due to Proposition \ref{L-packet invariance of PM}, the Plancherel measure $\mu_M(\nu, \tau, w)$ attached to any $\tau \in \Pi_{\tsigma}(M(F))$ has the same formula with $\mu_M(\nu, \tau_0, w)$. Moreover, Theorem \ref{relation bw JL-type and PM} admits the same formula for $\mu_{M'}(\nu, \tau', w')$ with any $\tau' \in \Pi_{\tsigma'}(M'(F))$. 
\end{rem}
\subsection{Proof of Theorem \ref{relation bw JL-type and PM}} \label{pf of thm for st}
In this section, we present the proof of Theorem \ref{relation bw JL-type and PM} by a local to global argument in Section \ref{local to global}. The outline is as follows.

($Step \, \mathit{1}$) Construct global data from given local data as explained in Section \ref{const from l to g}.

($Step \, \mathit{2}$) Find two cuspidal automorphic representations as described in Proposition \ref{existence pro}. 

($Step \, \mathit{3}$) Use Langlands' functional equation on Eisenstein series.

This is a more general version of the method of Mui{\'c} and Savin in \cite{ms00}. Along with their Siegel Levi subgroups ($\s GL_n$) of the groups $SO_{2n}$ and $Sp_{2n}$, our construction in ($Step \, \mathit{1}$) treats any $F$-Levi subgroup of connected reductive groups over a $p$-adic field of characteristic $0$. Further, ($Step \, \mathit{2}$) can be applied to any connected reductive group $M$ satisfying condition (\ref{cond on M in body}).

The rest of this section is devoted to the proof of Theorem \ref{relation bw JL-type and PM}. Due to Remarks \ref{useful rmks on pm} and \ref{remark for unitary and max}, it suffices to consider the case when $G$ is semisimple (so is $G'$) and $M$ is maximal (so is $M'$). It then turns out that $\ma^{*}_{M, \CC} \s \ma^{*}_{M', \CC} \s \CC$. Thus, we shall show that: for all $s \in \CC$,
\[
\mu_{M}(s, \sigma, w) = \mu_{M'}(s, {\sigma}', w').
\]
We start with the following Lemma which is immediately a consequence of Theorem \ref{construction of G and G'}.
\begin{lm} \label{const lemma}
Let $F$, $G$, $G'$, $M$, $M'$, $\tM$ and $\tM'$ be as above. Then, there exist a number field $\F$, a non-empty finite set $S$ of finite places of $\F$, a connected reductive $\F$-split group $\G$ and its $\F$-inner form $\G'$, their $\F$-Levi subgroups $\M$ and $\M'$ ( $\F$-inner form of $\M$ ), a connected reductive $\F$-split group $\wM$ and its $\F$-inner form $\wM'$ such that
\begin{itemize}
  \item[(a)]  for all $v \in S$, $\mathbf{F}_v \s F$,  $\mathbf{G}_v \s G$, $\mathbf{G}'_v \s G'$, $\mathbf{M}_v \s M$, $\mathbf{M}'_v \s M'$, $\wM_v \s \tM$ and $\wM'_v \s \tM'$ over $\mathbf{F}_v$, 
  \item[(b)]  for all $v \notin S$, $\mathbf{G}_v \s \mathbf{G}'_v$, $\mathbf{M}_v \s \mathbf{M}'_v$ and  $\mathbf{\tM}_v \s \mathbf{\tM}'_v$ over $\mathbf{F}_v$,
  \item[(c)]  $\M_{\der} = \wM_{\der} \subseteq \M \subseteq \wM$ and $\M'_{\der} = \wM'_{\der} \subseteq \M' \subseteq \wM'$.
\end{itemize}
\end{lm}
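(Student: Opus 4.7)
The plan is to apply Theorem \ref{construction of G and G'} directly to the pair $(G, G')$ and to build the remaining groups by a Levi / extension construction, exploiting the crucial fact that $\M^{\ad} = \wM^{\ad}$ whenever $\M_{\der} = \wM_{\der}$. First I would invoke Theorem \ref{construction of G and G'} for the $F$-split (hence quasi-split) group $G$ and its $F$-inner form $G'$, taking care to choose $\G$ to be the $\F$-split reductive group with the same root datum as $G$. This produces $\F$, $S$, $\G$, $\G'$ satisfying the conclusions (a) and (b) of that theorem for the pair $(\G, \G')$.

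Next, since $\G$ is $\F$-split, I define $\M$ to be the $\F$-Levi subgroup of $\G$ corresponding to the subset of simple roots that cuts out $M$ from $G$, and set $\wM := \prod_{i=1}^{r} \mathbf{GL}_{n_i}$ over $\F$. Condition (\ref{cond on M in body}) yields $\prod_{i} \mathbf{SL}_{n_i} \subseteq \M \subseteq \wM$ with $\M_{\der} = \wM_{\der}$, and the local isomorphisms $\M_v \s M$ and $\wM_v \s \tM$ over $\F_v$ at every $v \in S$ follow automatically from comparison of root data. Adjusting the inner twist $\varphi : \G' \to \G$ within its $\G^{\ad}(\bar{\F})$-orbit as in Section \ref{inner forms}, I carve out the $\F$-Levi subgroup $\M'$ of $\G'$, which is an $\F$-inner form of $\M$, with $\M'_v \s M'$ at $v \in S$ and $\M_v \s \M'_v$ at $v \notin S$.

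The hard part will be producing $\wM'$ with the required compatibility $\M' \subseteq \wM'$ in (c) while matching $\tM'$ locally at $v \in S$. The key observation is that for any connected reductive group $H$ one has $H^{\ad} = H_{\der}^{\ad}$, because $Z(H_{\der}) = Z(H) \cap H_{\der}$ and therefore $H/Z(H) = H_{\der}/Z(H_{\der})$; hence $\M^{\ad} = \wM^{\ad}$ and likewise $\M'^{\ad} = \wM'^{\ad}$. I then define $\wM'$ to be the $\F$-inner form of $\wM$ attached to the image of the cohomology class of $\G'$ under the restriction map
\[
H^1(\F, \G^{\ad}) \lra H^1(\F, \M^{\ad}) = H^1(\F, \wM^{\ad}).
\]
By functoriality, $\M'$ embeds in $\wM'$ with $\M'_{\der} = \wM'_{\der}$, giving (c). At each $v \in S$, the local restriction of this class is the inner-twist class of $\M'_v \to \M_v$, which under the identification $\M^{\ad}_v = \wM^{\ad}_v$ coincides with the inner-twist class of $\tM' \to \tM$ (by the consistency of the local data, since $M \subseteq \tM$ share the derived group locally), yielding $\wM'_v \s \tM'$; at $v \notin S$ the local restriction is trivial, giving $\wM_v \s \wM'_v$. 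This completes (a) and (b) for $(\wM, \wM')$ and finishes the lemma.
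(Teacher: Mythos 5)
The paper offers no argument at all for this lemma: it simply asserts that it is \emph{immediately a consequence} of Theorem~\ref{construction of G and G'} and moves on. So you are not being compared against a written proof; you are reconstructing what the author left implicit. Your overall plan --- invoke Theorem~\ref{construction of G and G'} for $(G,G')$ to obtain $(\F,S,\G,\G')$, read off $\M$ and $\wM$ from the split group $\G$, adjust $\varphi$ within its $\G^{\ad}(\bar\F)$-orbit to carve out the Levi $\M'\subseteq\G'$, and then use $\M^{\ad}=\wM^{\ad}$ to produce $\wM'$ --- is the right deduction, and the observation $\M^{\ad}=\M_{\der}^{\ad}=\wM_{\der}^{\ad}=\wM^{\ad}$ is exactly the leverage one needs for step (c).

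One step, however, is stated in a way that does not literally make sense. You write that $\wM'$ is obtained from ``the image of the cohomology class of $\G'$ under the restriction map $H^1(\F,\G^{\ad})\to H^1(\F,\M^{\ad})$.'' There is no such canonical map: there is no group homomorphism in either direction between $\G^{\ad}$ and $\M^{\ad}$ (one has $\M/Z(\G)\hookrightarrow\G^{\ad}$ but not $\M^{\ad}\to\G^{\ad}$, nor $\G^{\ad}\to\M^{\ad}$). What actually happens is what you already invoke in the previous paragraph for $\M'$: after adjusting $\varphi$ as in Section~\ref{inner forms}, the cocycle $\tau\mapsto\varphi\circ\tau(\varphi)^{-1}$ normalizes $\M$ and a parabolic with Levi $\M$, hence its values lie in the image of $\M$ in $\G^{\ad}$, and pushing forward along $\M/Z(\G)\twoheadrightarrow\M^{\ad}$ gives a class in $H^1(\F,\M^{\ad})$. \emph{That} is the class one transports to $H^1(\F,\wM^{\ad})$ via the equality $\M^{\ad}=\wM^{\ad}$. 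So the class you should be pushing is the class of $\M'$ in $H^1(\F,\M^{\ad})$, not the class of $\G'$ in $H^1(\F,\G^{\ad})$, and the relevant map is the identity $H^1(\F,\M^{\ad})=H^1(\F,\wM^{\ad})$, not a restriction.

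A cleaner and entirely cohomology-free way to finish is also worth noting: once $\M'$ has been produced, its derived group $\M'_{\der}$ is an $\F$-inner form of $\prod_i\mathbf{SL}_{n_i}$, so $\M'_{\der}(\F)=\prod_i SL_{m_i}(\D_i)$ for uniquely determined central division algebras $\D_i$ over $\F$. One may then simply \emph{define} $\wM'$ by $\wM'(\F)=\prod_i GL_{m_i}(\D_i)$. The inclusion $\M'\subseteq\wM'$, the equality $\M'_{\der}=\wM'_{\der}$, and the local identifications $\wM'_v\s\tM'$ for $v\in S$ and $\wM'_v\s\wM_v$ for $v\notin S$ all follow from the corresponding facts for $\M'$, because $\M'$ and $\wM'$ have the same derived group and the local classes of the $\D_i$ are read off from $\M'_{\der,v}$. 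This avoids the push-forward altogether and makes the ``consistency of the local data'' you appeal to at $v\in S$ a tautology rather than an assertion to be checked.
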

\noindent Note that $\wM(\F)$ and  $\wM'(\F)$ are of the forms $\prod_{i=1}^{r} \mathbf{GL}_{n_i}(\F)$ and $\prod_{i=1}^{r} \mathbf{GL}_{m_i}(\D_{d_i})$, respectively. Here $\D_{i}$ denotes a central division algebra of dimension $d_{i}^{2}$ over $\F$ and $n_i = m_i d_i$. For all $v \in S$, it turns out that $\mathbf{G}'_v$, $\mathbf{M}'_v$ and $\mathbf{\tM}'_v$ are \textit{non quasi-split} $F_v$-inner forms of $\mathbf{G}_v$, $\mathbf{M}_v$ and $\wM_v$ respectively. Also, $\mathbf{G}_v$, $\mathbf{G}'_v$, $\mathbf{M}_v$ and $\mathbf{M}'_v$ are all \textit{quasi-split} over $\mathbf{F}_v$ unless $v \in S$.

From now on, we fix a number field $\F$ and a finite set $S$ of finite places of $\F$ as defined in Lemma \ref{const lemma}, so that $\mathbf{F}_v \s F$ for all $v \in S$ and $S$ consists of all non-split places. Next, we find two following cuspidal automorphic representations $\pi = \otimes_v \pi_v$ of $\M(\AA)$ and $\pi' = \otimes_v \pi'_v$ of $\M'(\AA)$.
\begin{pro} \label{existence pro}
Let $\sigma \in \e(M(F))$ and $\sigma' \in \e(M'(F))$ be under the local JL-type correspondence. Then, there exist a finite set $V$ of places of $\F$ containing $S$ and all archimedean places, and two cuspidal automorphic representations $\pi = \otimes_v \pi_v$ of $\M(\AA)$ and $\pi' = \otimes_v \pi'_v$ of $\M'(\AA)$ such that
\begin{itemize}
  \item[(a)] for all $v \in S$, $\pi_v \s \sigma$ and $\pi'_v \in \Pi_{\tsigma'}(M'(F))$,
  \item[(b)] for all  $v \in V - S$, $\pi_v $ and $\pi'_v$ are irreducible constituents of the restriction of an irreducible representation of $\wM(\F_v)$ to $\M(\F_v)$,
  \item[(c)] for all $v \notin V$, $\pi_v$ and $\pi'_v$ are isomorphic and unramified with respect to $\M(\OO_v)$.
\end{itemize}
\end{pro}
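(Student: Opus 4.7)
The plan is to produce $\pi$ first, then transfer to the inner-form side via the global Jacquet--Langlands correspondence on the covering groups $\wM$ and $\wM'$, and finally descend by restriction. First I would apply the product generalization of Proposition \ref{he84} to the group $\M$, with prescribed local components $\sigma$ at each $v \in S$, to obtain a cuspidal automorphic representation $\pi = \otimes_v \pi_v$ of $\M(\AA)$ with $\pi_v \s \sigma$ for all $v \in S$, automatically unramified outside a finite set. Then Remark \ref{local rest} (i.e.\ \cite[Theorem 4.13]{hs11}) provides a cuspidal automorphic representation $\tlpi$ of $\wM(\AA)$ such that $\pi$ is a constituent of $\tlpi\vert_{\M(\AA)}$; by Proposition \ref{prop 2.2 tadic} and Remark \ref{remark for lifting}, $\tlpi_v$ has the form $\tsigma \otimes \eta_v$ at each $v \in S$ for some character $\eta_v$ of $\wM(\F_v)/\M(\F_v)$.

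Next I would apply the global Jacquet--Langlands correspondence (Theorem \ref{global JL} and Proposition \ref{JL for cuspidal}) to $\tlpi$. By Remark \ref{remark for sc to sc} the local component $\tlpi_v \s \tsigma \otimes \eta_v$ is supercuspidal, hence $d_v$-compatible (Remark \ref{remark for extended d-compatible}) for every $v \in S$; at $v \notin S$ we have $d_v = 1$ and compatibility is automatic. Thus $\tlpi$ is $D$-compatible, and $\tlpi' := \Phi^{-1}(\tlpi)$ is a cuspidal automorphic representation of $\wM'(\AA)$. By Remark \ref{torsion by character} combined with $\mathbf{C}(\tsigma) = \tsigma'$, one obtains $\tlpi'_v \s \tsigma' \otimes (\eta_v \circ \Nrd)$ for $v \in S$ and $\tlpi'_v \s \tlpi_v$ for $v \notin S$. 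Using Remark \ref{restriction of cuspidal rep}, the restriction $\tlpi'\vert_{\M'(\AA)}$ decomposes as a sum of cuspidal automorphic representations, and I would simply pick any such irreducible summand $\pi' = \otimes_v \pi'_v$. Finally, let $V$ be the union of $S$, all archimedean places, and the finite set of places where either $\tlpi$ or $\pi'$ is ramified.

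It then remains to verify (a), (b), (c). For (a), $\pi_v \s \sigma$ at $v \in S$ by construction; and at such $v$, $\pi'_v$ is an irreducible constituent of $(\tsigma' \otimes (\eta_v \circ \Nrd))\vert_{\M'(\F_v)}$, which by Remark \ref{remark for lifting} coincides with the packet $\Pi_{\tsigma'}(M'(F))$. For (b), at $v \in V - S$ we have $\wM_v \s \wM'_v$ and $\tlpi_v \s \tlpi'_v$ over $\F_v$, so $\pi_v$ and $\pi'_v$ are irreducible constituents of the restriction to $\M(\F_v) \s \M'(\F_v)$ of a single irreducible representation of $\wM(\F_v)$. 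For (c), at $v \notin V$ both $\pi_v$ and $\pi'_v$ are unramified constituents of the restriction of the unramified representation $\tlpi_v \s \tlpi'_v$; Proposition \ref{restriction of unramified representation} guarantees that this restriction admits a unique unramified constituent with respect to $\M(\OO_v)$, so $\pi_v \s \pi'_v$.

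The main obstacle I anticipate is synchronizing the Hiraga--Saito lifting with the global Jacquet--Langlands correspondence: one must ensure that the lift $\tlpi$ of $\pi$ to $\wM(\AA)$ is $D$-compatible (guaranteed here by the supercuspidality of $\sigma$ at places of $S$, which propagates to $\tlpi_v$ by Remark \ref{remark for sc to sc}), and that the $L$-packet $\Pi_{\tsigma'}(M'(F))$ on the inner-form side is insensitive to the twists $\eta_v \circ \Nrd$ that appear after running through global JL. Once this synchronization is handled via Remarks \ref{torsion by character} and \ref{remark for lifting}, the remainder of the argument is essentially bookkeeping: the unique unramified-constituent property of Proposition \ref{restriction of unramified representation} then forces the local matching at all places outside $V$, and the $D$-compatibility that drives global JL is local at $S$, so no further compatibility beyond $S$ is needed.
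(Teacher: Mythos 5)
Your proposal is correct and follows essentially the same route as the paper's own proof: globalize $\sigma$ to a cuspidal $\pi$ on $\M(\AA)$ via Henniart, lift to $\widetilde{\pi}$ on $\wM(\AA)$ via Hiraga--Saito, deduce $D$-compatibility from supercuspidality at the places in $S$, run the global Jacquet--Langlands correspondence to obtain $\widetilde{\pi}'$, restrict to $\M'(\AA)$ to extract a cuspidal $\pi'$, and identify $\pi'_v$ inside $\Pi_{\tsigma'}(M'(F))$ at $v \in S$ via the twist-by-character Remarks. The one small slip is in your definition of $V$: you must also include the finitely many places at which $\pi_v$ fails to be the unique unramified constituent of $\widetilde{\pi}_v\vert_{\M(\F_v)}$ (your set only catches ramification of $\widetilde{\pi}$ and $\pi'$, not of $\pi$); this is exactly what Proposition~\ref{restriction of global rep} supplies, and it is the lemma the paper invokes at this point to produce $V$.
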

\begin{proof} [Proof of Proposition \ref{existence pro}]
From Proposition \ref{he84}, we construct a cuspidal automorphic representation $\pi = \otimes \pi_v$ of $\M(\AA)$ such that $\pi_v \s \sigma$
for all $v \in S$. By Remark \ref{local rest}, we also have a cuspidal automorphic representation $\widetilde{\pi} = \otimes \widetilde{\pi}_v$ of $\wM(\AA)$ such that $\pi$ is an irreducible constituent of $\widetilde{\pi}|_{\M(\AA)}$ and $\pi_v$ is an irreducible constituent of $\widetilde{\pi}_v|_{\M(\F_v)}$ for all $v$. 

We claim that $\widetilde{\pi}$ is in the image of the map $\Phi$ defined in Theorem \ref{global JL}. To see this, we need to show that $\widetilde{\pi}$ is $D$-compatible. For all $v \in S$, since $\pi_v$ is in $\e(\M(\F_v))$, $\widetilde{\pi}_v$ is also in $\e(\wM(\F_v))$ by Remark \ref{remark for sc to sc}. It follows from remark \ref{remark for extended d-compatible} that $\widetilde{\pi}_v$ is $d_v$-compatible for all $v \in S$. Hence, from Definition \ref{def of D-compatible}, $\widetilde{\pi}$ is $D$-compatible. We note from Proposition \ref{JL for cuspidal} that $\widetilde{\pi}'$ is cuspidal since $\widetilde{\pi}$ is cuspidal. Therefore, we have a unique cuspidal automorphic representation $\widetilde{\pi}'$ of $\widetilde{\M}'(\AA)$ such that
$|\mathbf{LJ}|_v(\pi_v) = \pi'_v$ for all $v$.
Since both $\widetilde{\pi}_v$ and $\widetilde{\pi'}_v$ are supercuspidal for $v \in S$ and $S$ consists of non-split places, we note that $|\mathbf{LJ}|_v(\widetilde{\pi}_v) = \mathbf{C}(\widetilde{\pi}_v)$ for all $v \in S$ by Remark \ref{rem of |JL|} and $\widetilde{\pi}'_v \s \widetilde{\pi}_v$ for all $v \notin S$. 

Now we consider the restriction $\widetilde{\pi}'|_{\M'(\AA)}$ of $\widetilde{\pi}'$ to $\M'(\AA)$. From Remark \ref{restriction of cuspidal rep}, there exists a cuspidal automorphic representation $\pi'= \otimes_v \pi'_v$ of $\M'(\AA)$ such that $\pi'_v$ is an irreducible constituent of $\widetilde{\pi}'_v|_{\M'(\F_v)}$ for all $v$.

The assertions (a), (b) and (c) are verified as follows. For all $v \in S$, since $\sigma$ is an irreducible constituent of both $\tsigma|_{\M(\F_v)}$ and $\widetilde{\pi}_v|_{\M(\F_v)}$, it follows from Remark \ref{remark for lifting} that  $\widetilde{\pi}_v \s \tsigma \otimes (\eta_v \circ \det)$ for some quasi-character $\eta_v$ of $\F_v^{\times}$ (in fact, $\eta_v$ is unitary). 
Remark \ref{torsion by character} yields that 
$\widetilde{\pi}'_v \s \tsigma' \otimes (\eta_v \circ \Nrd)$. So, we have from Remark \ref{remark for lifting} that $\pi'_v$ lies in $\Pi_{\tsigma'}(M'(F))$ for all $v \in S$.
Since $\widetilde{\pi}'_v \s \widetilde{\pi}_v$ for all $v \notin S$, Proposition \ref{restriction of global rep} allows us to have a finite set $V$ containing $S$ such that $\pi_v$ and $\pi'_v$ are isomorphic and unramified for all $v \notin V$ with respect to $\M(\OO_v)$. Also, for all $v \in V - S$, $\pi_v$ and $\pi'_v$ are irreducible constituents of the restriction of $\widetilde{\pi}'_v \s \widetilde{\pi}_v$ of $\wM(\F_v)$ to $\M(\F_v)$. This completes the proof of Proposition \ref{existence pro}
\end{proof}

We consider the standard global intertwining operator $M(s, \pi) := \otimes_v A(s, \pi_v, w)$ from the global induced representation $I(s, \pi)$ to $I(-s, w(\pi))$. We refer to \cite[Sections 4 and 7]{ckm03}. For $f \in I(s, \pi)$, we note that
\[
f \in \otimes_{v \in V} I(s, \pi_v) \otimes (\otimes_{v \notin V} f^0_v), 
\]
where $f^0_v$ is a unique spherical vector such that $f^0_v | _{\M(\OO_v)} = 1$. Let $M(s, \pi')$, $I(s, \pi')$ and $f'^{0}_v$ have the corresponding meaning for the cuspidal automorphic representation $\pi'$ of $\M'(\AA)$. Due to Proposition \ref{existence pro}, we identify $f^0_v$ and $f'^{0}_v$. Given $f = \otimes_v f_v \in I(s, \pi)$ and $f' = \otimes_v f'_v \in I(s, \pi')$, we get
\[
M(s, \pi) f = \otimes_{v \in V} A(s, \pi_v, w) f_v \otimes (\otimes_{v \notin V} A(s, \pi_v, w) f^0_v),
\] 
\[
M(s, \pi') f' = \otimes_{v \in V} A(s, \pi'_v, w') f'_v \otimes (\otimes_{v \notin V} A(s, \pi'_v, w') f^0_v).
\] 
So, we have the following functional equations by Eisenstein series 
\[
M(s, \pi)M(-s, w(\pi))= id,
\]
\[
M(s, \pi')M(-s, w(\pi'))= id.
\] 
It then follows that
\begin{equation} \label{identity of intertwining operators of M}
\prod_{v \in V}  \mu_{\M_v}(s, \pi_v, w) \gamma _{\tilde{w}}(\G_v|\M_v)^{-2}  \prod_{v \notin V} c_v(s, \pi_v)c_v(-s, w(\pi_v)) = 1,
\end{equation}
\begin{equation}\label{identity of intertwining operators of M'}
\prod_{v \in V}  \mu_{\M'_v}(s, \pi'_v, w') \gamma _{\tilde{w}}(\G'_v|\M'_v)^{-2} \prod_{v \notin V} c_v(s, \pi'_v)c_v(-s, w(\pi'_v)) = 1.
\end{equation}
Here $c_v( \cdot , \cdot )$ is a quotient of the local Langlands $L$-functions for unramified representations (see \cite[(2.7) p.554]{sh88}). Since $\pi_v \s \pi'_v$ for all $v \notin V$, we have 
\[
c_v(s, \pi_v)c_v(-s, w(\pi_v)) = c_v(s, \pi'_v)c_v(-s, w(\pi'_v)).
\]
For all $v \in V-S$, since $\pi_v $ and $\pi'_v$ are irreducible constituents of the restriction of an irreducible representation (denoted by $\widetilde{\tau}_v$) of $\wM(\F_v)$ to $\M(\F_v)$, both $\mu_{\M_v}(s, \pi_v, w)$ and $\mu_{\M'_v}(s, \pi'_v, w')$ can be expressed in terms of the Artin $L$-function and root number attached to the $L$-parameter of $\widetilde{\tau}_v$ (see \cite[Proposition 2.6]{ca80} and \cite[Theorem 3.1]{keysh} for non-archimedean places, \cite[Section 3]{arthur89f} for archimedean places). It then follows that $\mu_{\M_v}(s, \pi_v, w) = \mu_{\M'_v}(s, \pi'_v, w')$.
Further, we note that $\gamma _{\tilde{w}}(\G_v|\M_v)=\gamma _{\tilde{w}}(\G'_v|\M'_v)$ by \cite[p.89]{ac89}. So, we have
\[
\prod_{v \in S}  \mu_{\M_v}(s, \pi_v, w) = \prod_{v \in S}  \mu_{\M'_v}(s, \pi'_v, w').
\]
For all $v \in S$, we note that: $\M_v \s M$ and $\M'_v \s M'$ over $\F_v$ by Lemma \ref{const lemma}; $\pi_v \s \sigma$ by Proposition \ref{existence pro}; $\mu_{\M'_v}(s, \pi'_v, w') = \mu_{\M'_v}(s, \sigma', w')$ by Working Hypothesis \ref{whyp}. 
Hence, we deduce from equations (\ref{identity of intertwining operators of M}) and (\ref{identity of intertwining operators of M'}) that 
\begin{equation} \label{equality equality of two pm}
\mu_{M}(s, \sigma, w)^m = \mu_{M'}(s, \sigma', w')^m. 
\end{equation}
Here $m$ denotes the cardinality of $S$.
Since Plancherel measures are holomorphic and non-negative along the unitary axis $Re(s)=0$, we therefore have that
\[
\mu_{M} (s, \sigma, w) = \mu_{M'} (s, \sigma', w')
\]
for all $s \in \CC$. This completes the proof of Theorem \ref{relation bw JL-type and PM}.
\section{Applications} \label{ex and appl}
In this section, we present some applications of Theorem \ref{relation bw JL-type and PM}. We continue with the notation in Section \ref{main thms}. Throughout Section \ref{ex and appl}, we assume that $M$ and $M'$ are maximal.
For $\sigma \in \Irr(M(F))$ and $\sigma' \in \Irr(M'(F))$, we define $W(\sigma) := \lbrace  w \in W_M : \; ^{w}\sigma \s \sigma \rbrace
~ \text{and} ~ 
W(\sigma') := \lbrace  w' \in W_{M'} : \; ^{w'}\sigma' \s \sigma' \rbrace.
$
\begin{pro} \label{pro of transfer of red}
Let $\sigma \in \e(M(F))$ and $\sigma' \in \e(M'(F))$ be under the local JL-type correspondence. Suppose that $|W(\sigma)| = |W(\sigma')| = 2$. Let $\nu_0 \in \RR$ be given. Then, under Working Hypothesis \ref{whyp}, $\ii_{G,M}(\nu, \sigma)$ is reducible at $\nu=\nu_0$ if and only if $\ii_{G',M'}(\nu, \sigma')$ is reducible at $\nu=\nu_0$. Moreover, $\nu_0$ is either $0$ or $ \pm x_0 $ for some positive real number $x_0$.
\end{pro}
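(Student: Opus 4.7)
The plan is to deduce everything from the equality of Plancherel measures provided by Theorem \ref{relation bw JL-type and PM}, combined with the classical Harish-Chandra--Silberger reducibility criterion for a maximal Levi subgroup.

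First, since $\sigma$ and $\sigma'$ are under the local JL-type correspondence and Working Hypothesis \ref{whyp} is in force, Theorem \ref{relation bw JL-type and PM} gives
\[
\mu_{M}(\nu, \sigma, w) \;=\; \mu_{M'}(\nu, \sigma', w')
\]
as meromorphic functions on $\ma^{*}_{M, \CC} \simeq \ma^{*}_{M', \CC} \simeq \CC$. In particular, the two Plancherel measures share the same zero and pole locus along $\RR$.

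Next, I would invoke the rank-one reducibility criterion of Harish-Chandra and Silberger (cf.\ \cite{sh90}): since $M$ is maximal and $|W(\sigma)| = 2$, the non-trivial element $\tilde w \in W_M$ satisfies $\tilde w(\sigma) \simeq \sigma$, and the induced representation $\ii_{G,M}(\nu, \sigma)$ is reducible at a point $\nu_0 \in \RR$ if and only if either $\nu_0 = 0$ (where reducibility is forced by the non-triviality of the Knapp--Stein $R$-group attached to $W(\sigma)$) or else $\mu_{M}(\,\cdot\,, \sigma, w)$ has the appropriate vanishing/singular behavior at $\nu_0$ dictated by that criterion. The same statement holds verbatim for $\ii_{G',M'}(\nu, \sigma')$ using $|W(\sigma')| = 2$. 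Combining these two applications of the criterion with the equality of Plancherel measures, reducibility of $\ii_{G,M}(\nu,\sigma)$ at $\nu_0$ is equivalent to reducibility of $\ii_{G',M'}(\nu,\sigma')$ at $\nu_0$.

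For the ``moreover'' assertion, one appeals to the classical structure theorem of Silberger: for an irreducible unitary supercuspidal $\sigma$ induced from a maximal Levi with $|W(\sigma)| = 2$, the set of real reducibility points of $\ii_{G,M}(\nu,\sigma)$ is contained in $\{0\} \cup \{\pm x_0\}$ for at most one positive real number $x_0$; the reality of the non-zero point follows from the symmetry $\mu_{M}(\nu, \sigma, w) = \mu_{M}(-\nu, \sigma, w)$ together with holomorphicity and non-negativity on the unitary axis recorded in Remark \ref{useful rmks on pm}. This immediately yields the stated shape of $\nu_0$. The main technical step is the careful application of the Silberger criterion (matching the sign/zero-versus-pole convention used in Definition \ref{definition of PM}); once the meromorphic equality of Plancherel measures is available from Theorem \ref{relation bw JL-type and PM}, the transfer of both the reducibility points and the real-axis structure is essentially formal.
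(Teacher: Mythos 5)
Your overall strategy — reduce everything to the equality of Plancherel measures supplied by Theorem \ref{relation bw JL-type and PM}, then apply the Harish-Chandra--Silberger rank-one reducibility criteria — is exactly the paper's approach. However, there is a substantive error in your description of the criterion at $\nu_0 = 0$. You assert that reducibility of $\ii_{G,M}(0,\sigma)$ is ``forced by the non-triviality of the Knapp--Stein $R$-group attached to $W(\sigma)$.'' This is not correct: $|W(\sigma)| = 2$ does \emph{not} imply that the $R$-group is non-trivial, and $\ii_{G,M}(0,\sigma)$ need not be reducible. The $R$-group is non-trivial (equivalently, $\ii_{G,M}(0,\sigma)$ is reducible) precisely when $\mu_M(0,\sigma,w) \neq 0$, by \cite[Corollary 5.4.2.3]{sil79}; when $\mu_M(0,\sigma,w) = 0$, the $R$-group is trivial and $\ii_{G,M}(0,\sigma)$ is irreducible.

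The paper's proof makes this dichotomy explicit, and it is essential to the ``moreover'' conclusion: by \cite[Lemmas 1.2 and 1.3]{sil80}, either $\mu_M(0,\sigma,w) \neq 0$, in which case $\mu_M(\nu,\sigma,w)$ is holomorphic on $\RR \setminus \{0\}$ and \cite[Lemma 5.4.2.4]{sil79} shows that $\nu_0 = 0$ is the only real reducibility point; or $\mu_M(0,\sigma,w) = 0$, in which case $\ii_{G,M}(0,\sigma)$ is irreducible and $\mu_M$ has simple poles at exactly one pair $\pm x_0$ with $x_0 > 0$, which are precisely the real reducibility points. Your proposal gestures at the correct conclusion (reducibility points contained in $\{0\} \cup \{\pm x_0\}$) but, as written, it would incorrectly assert reducibility at $\nu_0 = 0$ unconditionally, and it does not extract the two mutually exclusive cases. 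Once the dichotomy is inserted, the transfer to $M'$ goes through exactly as you indicate, since Theorem \ref{relation bw JL-type and PM} equates the values and the pole structure of $\mu_M$ and $\mu_{M'}$.
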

\begin{proof}
If $\mu_{M}(0, \sigma, w) \neq 0$, we have $\mu_{M'}(0, \sigma', w') \neq 0$ from Theorem \ref{relation bw JL-type and PM}. Since $|W(\sigma)| = |W(\sigma')| = 2$, we note from \cite[Corollary 5.4.2.3]{sil79} that both $\ii_{G,M}(0, \sigma)$ and $\ii_{G',M'}(0, \sigma')$ are reducible. Also, it follows from \cite[Lemma 1.3]{sil80} that both $\mu_{M}(\nu, \sigma, w)$ and $\mu_{M'}(\nu, \sigma', w')$ are holomorphic for all $\nu \in \RR - \{ 0 \}$. Hence, by \cite[Lemma 5.4.2.4]{sil79}, both $\ii_{G,M}(\nu, \sigma)$ and $\ii_{G',M'}(\nu, \sigma')$ are irreducible for all $\nu \in \RR - \{0 \}$.

If $\mu_{M}(0, \sigma, w) = 0$, Theorem \ref{relation bw JL-type and PM} implies that $\mu_{M'}(0, \sigma', w') = 0$. So, by \cite[Corollary 5.4.2.3]{sil79}, both $\ii_{G,M}(0, \sigma)$ and $\ii_{G',M'}(0, \sigma')$ are irreducible. It follows from \cite[Lemma 1.2]{sil80} that, for $\nu \in \RR$, there exists a unique $x_0 > 0$ such that $\mu_{M}(\nu, \sigma, w)$ has a (simple) pole at $\nu = \pm x_0$. We note from \cite[Lemma 5.4.2.4]{sil79} that $\ii_{G,M}(\nu, \sigma)$ with $\nu \in \RR$ is reducible only at $\nu = \pm x_0$. By Theorem \ref{relation bw JL-type and PM}, the same is true for $\ii_{G',M'}(\nu, \sigma')$. Therefore, both are irreducible for all $\nu \in \RR - \{ \pm x_0 \}$.
\end{proof}
\begin{rem}
If $|W(\sigma)| = |W(\sigma')| = 1$, both $\ii_{G,M}(\nu, \sigma)$ and $\ii_{G',M'}(\nu, \sigma')$ are irreducible for any $\nu \in \RR$ due to \cite[Lemma 1.3]{sil80} and \cite[Lemma 5.4.2.4]{sil79}. 
\end{rem}
For $\sigma \in \Irr_u(M(F))$ and $\nu \in \RR$, we say that $\ii_{G,M}(\nu, \sigma)$ is \textit{in the complementary series} if it is unitarizable. The following is immediately a consequence of Proposition \ref{pro of transfer of red}.
\begin{cor} \label{rem for comp}
Assume as in Proposition \ref{pro of transfer of red}. If $\ii_{G,M}(0, \sigma)$ is irreducible, then $\ii_{G,M}(\nu, \sigma)$ is in the complimentary series if and only if $\ii_{G',M'}(\nu, \sigma')$ is in the complimentary series if and only if $|\nu| < x_0$. 
If $\ii_{G,M}(0, \sigma)$ is reducible, then neither $\ii_{G,M}(\nu, \sigma)$ nor $\ii_{G',M'}(\nu, \sigma')$ is in the complimentary series for $\nu > 0$.
\end{cor}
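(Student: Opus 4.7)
The plan is to combine Proposition \ref{pro of transfer of red} with the classical theory of complementary series for unitarily induced representations from unitary supercuspidal representations of maximal Levi subgroups (as developed by Silberger, with the Plancherel/Harish--Chandra $\mu$-function controlling reducibility). Since $\sigma$ and $\sigma'$ are unitary, the representations $\ii_{G,M}(0,\sigma)$ and $\ii_{G',M'}(0,\sigma')$ are unitarily induced and hence unitary. The strategy is to exploit the fact that the set of $\nu \in \RR$ for which $\ii_{G,M}(\nu,\sigma)$ admits an invariant Hermitian form is an open interval symmetric about $0$ on which unitarity can degenerate only at a reducibility point.

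First I would treat the case when $\ii_{G,M}(0,\sigma)$ is irreducible. By the proof of Proposition \ref{pro of transfer of red}, this is equivalent to $\mu_M(0,\sigma,w)=0$, in which case there is a unique $x_0 > 0$ such that both $\mu_M(\nu,\sigma,w)$ and $\mu_{M'}(\nu,\sigma',w')$ have a simple pole at $\nu = \pm x_0$ and both induced representations are irreducible on $\RR - \{\pm x_0\}$. Invoking the standard complementary series criterion (\cite[Sections 5.4, 5.5]{sil79}; compare also Shahidi's framework), unitarity of $\ii_{G,M}(\nu,\sigma)$ for $\nu$ in a neighborhood of $0$ follows by continuous deformation of the canonical invariant Hermitian form from the unitary case at $\nu = 0$. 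This form can only fail to be positive definite at a reducibility point; since the first such point is $\nu = x_0$, the complementary series is exactly $(-x_0, x_0)$. Theorem \ref{relation bw JL-type and PM} ensures the same $x_0$ works for $\sigma'$, so the two complementary series coincide.

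Next I would treat the case when $\ii_{G,M}(0,\sigma)$ is reducible. By Proposition \ref{pro of transfer of red} this is the situation $\mu_M(0,\sigma,w) \neq 0$, and both $\ii_{G,M}(\nu,\sigma)$ and $\ii_{G',M'}(\nu,\sigma')$ are irreducible for every $\nu \in \RR - \{0\}$. In this case a standard application of Harish--Chandra's Plancherel theorem (or equivalently Silberger's general results on unitarity and the rank-one Plancherel measure, \cite[Chapter 5]{sil79}; cf. also \cite[Lemma 1.2]{sil80}) gives that no irreducible constituent of $\ii_{G,M}(\nu,\sigma)$ for $\nu > 0$ is tempered or unitary: the Langlands quotient is not Hermitian because the intertwining operator $A(\nu,\sigma,w)$ is holomorphic and nonzero on $(0,\infty)$. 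Hence there is no complementary series on the positive real axis, and by Theorem \ref{relation bw JL-type and PM} the same conclusion holds on the $M'$-side.

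The main obstacle I anticipate is citing the precise form of the complementary series criterion that is being used; once one commits to Silberger's framework the argument is essentially formal, since Proposition \ref{pro of transfer of red} already provides both the identification of reducibility points and the transfer of the edge $x_0$ between $\sigma$ and $\sigma'$, leaving only the passage from reducibility information to unitarity information to be verified.
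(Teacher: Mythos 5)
Your overall strategy is the same as what the paper has in mind when it calls the corollary an immediate consequence of Proposition~\ref{pro of transfer of red}: combine the reducibility analysis from that proposition with the standard complementary series criterion expressed via the rank-one $\mu$-function. Your treatment of the irreducible case is correct: the invariant Hermitian form on $\ii_{G,M}(\nu,\sigma)$ exists for $\nu$ real because $|W(\sigma)|=2$ forces ${}^{w}\sigma \simeq \sigma$, it is positive definite at $\nu=0$, its signature can change only at a reducibility point, and by Proposition~\ref{pro of transfer of red} the first such point is $x_0$; Theorem~\ref{relation bw JL-type and PM} transfers the same $x_0$ to the $M'$-side.

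In the reducible case your conclusion is right but one sentence contains a genuine error. You write that for $\nu>0$ "the Langlands quotient is not Hermitian because the intertwining operator $A(\nu,\sigma,w)$ is holomorphic and nonzero." That is not the right reason, and the premise is false: since $|W(\sigma)|=2$ gives ${}^{w}\sigma\simeq\sigma$, the representation $\ii_{G,M}(\nu,\sigma)$ \emph{does} carry an invariant Hermitian form for every real $\nu$, supplied by the (normalized) standard intertwining operator; this has nothing to do with $A$ being holomorphic or nonvanishing, which instead controls irreducibility. The correct argument is that when $\ii_{G,M}(0,\sigma)$ splits into two nonzero summands, the limiting Hermitian form at $\nu=0$ has mixed signature, so for small $\nu>0$ the form is indefinite; since Proposition~\ref{pro of transfer of red} gives irreducibility on $(0,\infty)$ in this case, the signature cannot change as $\nu$ increases, so no point $\nu>0$ is in the complementary series. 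Equivalently one may just invoke the statement in \cite[Theorem 8.1]{sh90} (used explicitly in the next corollary of the paper) that a complementary series exists only when $\mu_M(0,\sigma,w)=0$. With that correction, your proof is sound and follows the paper's intended route.
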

\begin{rem}
Proposition \ref{pro of transfer of red} and Corollary \ref{rem for comp} imply that the reducibility of $\ii_{G,M}(\nu, \sigma)$ and the edges of the complimentary series are transferred to those of $\ii_{G',M'}(\nu, \sigma')$ for $\nu \in \RR$.
\end{rem}
More explicit values of $x_0$ are made in the following corollary which is a consequence of Remark \ref{remark for generic L-packet}, Proposition \ref{pro of transfer of red}, Corollary \ref{rem for comp} and \cite[Theorem 8.1]{sh90}.
\begin{cor} \label{cor for comp}
Assume as in Proposition \ref{pro of transfer of red}. Suppose $\ii_{G,M}(0, \sigma)$ is irreducible. Choose a unique $i$, $i=1$ or $2$, such that $P_{\sigma, i}(1)=0$ (see \cite[Corollary 7.6]{sh90} for details). Then,
\begin{itemize}
  \item[(a)] the real number $x_0$ in Proposition \ref{pro of transfer of red} is either $\frac{1}{2}$  or  $ 1 $,
  \item[(b)] for $0 < |\nu| < 1/i$, both $\ii_{G,M}(\nu, \sigma)$ and $\ii_{G',M'}(\nu, \sigma')$ are in the complementary series,
  \item[(c)] for $|\nu| > 1/i$, neither $\ii_{G,M}(\nu, \sigma)$ nor $\ii_{G',M'}(\nu, \sigma')$ is in the complementary series.
\end{itemize}
\end{cor}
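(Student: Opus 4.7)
The plan is to reduce the study of $\mu_M(\nu,\sigma,w)$ to the Plancherel measure of a generic supercuspidal, invoke Shahidi's explicit formula, and then combine with the transfer already established in Proposition \ref{pro of transfer of red} and Corollary \ref{rem for comp}.

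First, I would exploit Remark \ref{remark for generic L-packet}: because $\tsigma \in \e(\wM(F))$ is automatically $\psi$-generic (every supercuspidal of a product of general linear groups is generic) and because $M$ contains the derived group of $\wM$, there is a unique $\psi$-generic member $\tau_0 \in \Pi_{\tsigma}(M(F))$. Under Working Hypothesis \ref{whyp}, Proposition \ref{L-packet invariance of PM} then gives
\[
\mu_M(\nu,\sigma,w) \;=\; \mu_M(\nu,\tau_0,w)
\]
for all $\nu \in \ma^{*}_{M,\CC}$, so the pole/zero structure of $\mu_M(\nu,\sigma,w)$ on the real line coincides with that of the Plancherel measure of the generic supercuspidal $\tau_0$, where the Langlands--Shahidi method applies.

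Next, I would apply \cite[Corollary 3.6]{sh90} to write $\mu_M(\nu,\tau_0,w)$ as an explicit ratio of local Artin $L$-functions and root numbers coming from the adjoint action of $\hat M$ on $\hat{\mathfrak n}$. Since $M$ is maximal in the semisimple group $G$, this decomposition has at most two irreducible pieces, labeled $i=1,2$, and produces the polynomials $P_{\sigma,i}(q^{-s})$ of \cite[Section 7]{sh90}. Invoking \cite[Theorem 8.1]{sh90} under the present assumptions ($|W(\sigma)|=2$ and $\ii_{G,M}(0,\sigma)$ irreducible), exactly one of the two polynomials satisfies $P_{\sigma,i}(1)=0$, and the corresponding real pole of $\mu_M(\nu,\tau_0,w)$ occurs at $\nu = \pm 1/i$, with no other real pole. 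This yields $x_0 = 1/i \in \{1/2,\,1\}$, which is (a).

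Finally, Proposition \ref{pro of transfer of red} transfers the unique real reducibility point from $M$ to $M'$, so the same $x_0$ governs $\ii_{G',M'}(\nu,\sigma')$, and Corollary \ref{rem for comp} identifies the complementary series interval with $|\nu| < x_0 = 1/i$; this delivers (b) and (c) in one stroke. The only potential difficulty is verifying that the black-box input \cite[Theorem 8.1]{sh90} applies verbatim to $\tau_0$ in the possibly non-split ambient $G$ satisfying (\ref{cond on M in body}); this is fine since the Langlands--Shahidi formula for $\mu_M(\nu,\tau_0,w)$ depends only on the generic supercuspidal datum on the $F$-split group and on the adjoint action on $\hat{\mathfrak n}$, both of which are present unchanged here.
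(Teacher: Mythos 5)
Your proof is correct and follows exactly the route the paper intends: reduce to the generic member $\tau_0$ via Remark~\ref{remark for generic L-packet} and Proposition~\ref{L-packet invariance of PM}, read off $x_0 = 1/i$ from \cite[Theorem 8.1]{sh90}, and then invoke Proposition~\ref{pro of transfer of red} and Corollary~\ref{rem for comp} to carry the reducibility point and complementary series to $G'$. One small remark: your closing worry about ``the possibly non-split ambient $G$'' is moot, since $G$ is assumed $F$-split throughout (condition~(\ref{cond on M in body}) constrains $M$, and the non-split group is $G'$, to which \cite[Theorem 8.1]{sh90} is never applied directly -- it is reached only through the transfer results); also note that $\wM$ is the paper's notation for the global group, so you mean $\tM$.
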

In what follows, we prove that both the reducibility of $\ii_{G,M}(\nu, \sigma)$ and the edges of complementary series are invariant on the set $\Pi_{\tsigma}(M(F))$ when $M$ is maximal.

\begin{pro} \label{pro of L-packet invariance of red}
Let $\sigma_1$, $\sigma_2 \in \Pi_{\tsigma}(M(F))$ be given. Suppose that $|W(\sigma_1)| = |W(\sigma_2)| = 2$. Let $\nu_0 \in \RR$ be given. Then, under Working Hypothesis \ref{whyp}, $\ii_{G,M}(\nu, \sigma_1)$ is reducible at $\nu=\nu_0$ if and only if $\ii_{G,M}(\nu, \sigma_2)$ is reducible at $\nu=\nu_0$. Moreover, $\nu_0$ is either $0$ or $ \pm x_0 $ for some positive real number $x_0$.
\end{pro}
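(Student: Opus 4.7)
The proof will follow the same skeleton as that of Proposition \ref{pro of transfer of red}, but with the role played there by Theorem \ref{relation bw JL-type and PM} replaced by Proposition \ref{L-packet invariance of PM}. The plan is to reduce the question of reducibility to the vanishing and pole behavior of Plancherel measures, then transport that information across $\Pi_{\tsigma}(M(F))$ via the $L$-packet invariance of the Plancherel measure.

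First, since $\sigma_1, \sigma_2 \in \Pi_{\tsigma}(M(F))$ and Working Hypothesis \ref{whyp} is in force, Proposition \ref{L-packet invariance of PM} yields
\[
\mu_M(\nu, \sigma_1, w) = \mu_M(\nu, \sigma_2, w)
\]
for all $\nu \in \ma^{*}_{M, \CC}$. Since $M$ is maximal, $\ma^{*}_{M,\CC} \s \CC$, so this is an identity of meromorphic functions of one complex variable.

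Next, I split into two cases exactly as in the proof of Proposition \ref{pro of transfer of red}. If $\mu_M(0, \sigma_1, w) \neq 0$, then $\mu_M(0, \sigma_2, w) \neq 0$ as well. Since $|W(\sigma_1)| = |W(\sigma_2)| = 2$, \cite[Corollary 5.4.2.3]{sil79} gives that both $\ii_{G,M}(0, \sigma_1)$ and $\ii_{G,M}(0, \sigma_2)$ are reducible; moreover by \cite[Lemma 1.3]{sil80} the common Plancherel measure is holomorphic on $\RR - \{0\}$, and \cite[Lemma 5.4.2.4]{sil79} then forces both induced representations to be irreducible for every $\nu \in \RR - \{0\}$. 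This is the case $\nu_0 = 0$. On the other hand, if $\mu_M(0, \sigma_1, w) = 0$, then $\mu_M(0, \sigma_2, w) = 0$, so \cite[Corollary 5.4.2.3]{sil79} gives that both $\ii_{G,M}(0, \sigma_i)$ are irreducible; by \cite[Lemma 1.2]{sil80} there is a unique $x_0 > 0$ at which the common Plancherel measure has a (simple) pole on the real line, and \cite[Lemma 5.4.2.4]{sil79} again identifies the reducibility points of both $\ii_{G,M}(\nu, \sigma_i)$ with $\pm x_0$.

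The argument is essentially a routine transcription of the proof of Proposition \ref{pro of transfer of red}: the only structural input used is the equality of Plancherel measures, which in the earlier proposition came from Theorem \ref{relation bw JL-type and PM} and which here comes from Proposition \ref{L-packet invariance of PM}. There is no genuine obstacle; the one thing to be careful about is the hypothesis $|W(\sigma_1)| = |W(\sigma_2)| = 2$, which is required in order to apply \cite[Corollary 5.4.2.3]{sil79} to both members of the packet (one cannot invoke it for a single representation of the pair and transfer the conclusion, since the criterion itself involves the Weyl group stabilizer). With this hypothesis explicitly built into the statement, the two steps above complete the proof.
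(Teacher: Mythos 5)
Your proof is correct, but it takes a different route than the paper. The paper's own proof is a one-liner: it fixes a single $\sigma' \in \Pi_{\tsigma'}(M'(F))$ and applies Proposition \ref{pro of transfer of red} twice, once with $\sigma_1$ and once with $\sigma_2$, so that the reducibility dichotomy for $\ii_{G,M}(\nu,\sigma_1)$ and for $\ii_{G,M}(\nu,\sigma_2)$ is each identified with that of $\ii_{G',M'}(\nu,\sigma')$ and hence with each other. You instead stay entirely on the quasi-split side: you invoke Proposition \ref{L-packet invariance of PM} to equate $\mu_M(\nu,\sigma_1,w)$ and $\mu_M(\nu,\sigma_2,w)$ directly, then re-run the Silberger analysis from \cite[Corollary 5.4.2.3, Lemma 5.4.2.4]{sil79} and \cite[Lemmas 1.2, 1.3]{sil80}. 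Both proofs are valid, but your route has a small advantage: the paper's argument, as written, implicitly uses Proposition \ref{pro of transfer of red} with hypothesis $|W(\sigma')| = 2$, which is not stated in the hypotheses of the proposition under discussion and would need to be checked (or deduced from $|W(\sigma_i)|=2$ under the JL-type correspondence). By reasoning purely from the Plancherel-measure equality on $M$, your proof never needs to know anything about $W(\sigma')$, and is in that sense more self-contained. The tradeoff is that you repeat the Silberger casework rather than reducing to a prior proposition.
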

\begin{proof}
Fix $\sigma' \in \Pi_{\tsigma'}(M'(F))$ in Proposition \ref{pro of transfer of red}. By varying $\sigma$ over $\Pi_{\tsigma}(M(F))$, we have the proposition.
\end{proof}
In a similar way to Corollary \ref{rem for comp}, we have the following.
\begin{cor} \label{rem for comp L-packet inv}
Assume as in Proposition \ref{pro of L-packet invariance of red}. If $\ii_{G,M}(0, \sigma_1)$ is irreducible, then $\ii_{G,M}(\nu, \sigma_1)$ is in the complimentary series if and only if $\ii_{G,M}(\nu, \sigma_2)$ is in the complimentary series if and only if $|\nu| < \nu_0$. If $\ii_{G,M}(0, \sigma_1)$ is reducible, then neither $\ii_{G,M}(\nu, \sigma_1)$ nor $\ii_{G,M}(\nu, \sigma_2)$ is in the complimentary series for $\nu > 0$.
\end{cor}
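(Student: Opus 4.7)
The plan is to mirror the proof of Corollary \ref{rem for comp} line by line, but replacing the JL-type correspondence input (Theorem \ref{relation bw JL-type and PM}) with the $L$-packet invariance input (Proposition \ref{L-packet invariance of PM}), and replacing Proposition \ref{pro of transfer of red} with its $L$-packet analogue Proposition \ref{pro of L-packet invariance of red}. Since all of these are already established in the excerpt under Working Hypothesis \ref{whyp}, the argument should be a short deduction from Silberger's criteria.

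First, assume $\ii_{G,M}(0, \sigma_1)$ is irreducible. By Proposition \ref{pro of L-packet invariance of red}, $\ii_{G,M}(0, \sigma_2)$ is also irreducible, and by \cite[Corollary 5.4.2.3]{sil79} (together with $|W(\sigma_i)|=2$) this forces $\mu_M(0, \sigma_i, w)=0$ for $i=1,2$. Invoke \cite[Lemma 1.2]{sil80} to obtain, for $\sigma_1$, a unique $x_0>0$ such that $\mu_M(\nu,\sigma_1,w)$ has a simple pole at $\nu=\pm x_0$ on the real axis. By Proposition \ref{L-packet invariance of PM}, $\mu_M(\nu,\sigma_1,w)=\mu_M(\nu,\sigma_2,w)$, so the identical pole structure holds for $\sigma_2$. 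Then \cite[Lemma 5.4.2.4]{sil79} shows that $\ii_{G,M}(\nu,\sigma_i)$ is irreducible for $\nu\in\RR-\{\pm x_0\}$ and unitarizable precisely on the real interval $0<|\nu|<x_0$; this interval is the complementary series segment and is common to $\sigma_1$ and $\sigma_2$. Taking $\nu_0=x_0$ yields the first assertion.

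Next, suppose $\ii_{G,M}(0,\sigma_1)$ is reducible. Proposition \ref{pro of L-packet invariance of red} gives that $\ii_{G,M}(0,\sigma_2)$ is reducible as well. Then \cite[Corollary 5.4.2.3]{sil79} yields $\mu_M(0,\sigma_i,w)\neq 0$, and by \cite[Lemma 1.3]{sil80} both $\mu_M(\nu,\sigma_i,w)$ are holomorphic and non-vanishing on $\RR-\{0\}$. By the standard characterization (see \cite[Lemma 5.4.2.4]{sil79} and the discussion in \cite{sil80}), neither $\ii_{G,M}(\nu,\sigma_1)$ nor $\ii_{G,M}(\nu,\sigma_2)$ can lie in the complementary series for any $\nu>0$, since the would-be unitarizing pairing breaks down whenever the induced representation at the origin is already reducible.

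There is no substantive obstacle: everything rests on (i) the $L$-packet invariance of Plancherel measures from Proposition \ref{L-packet invariance of PM} (which itself uses Working Hypothesis \ref{whyp}) and (ii) Silberger's classical criteria relating poles and zeros of $\mu_M$ to reducibility and unitarity of induced representations. The only point to be careful about is matching notation: the $\nu_0$ in the statement of the corollary corresponds to the real number $x_0>0$ supplied by Proposition \ref{pro of L-packet invariance of red}, and one should explicitly observe that this $x_0$ is intrinsic to the set $\Pi_{\tsigma}(M(F))$ because $\mu_M(\nu,\cdot,w)$ is constant on that set by Proposition \ref{L-packet invariance of PM}.
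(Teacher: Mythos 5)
Your proof is correct and takes essentially the same route the paper intends: the paper disposes of this corollary with the single remark ``In a similar way to Corollary \ref{rem for comp}, we have the following,'' and Corollary \ref{rem for comp} is itself declared an immediate consequence of the reducibility proposition plus Silberger's criteria relating poles and zeros of $\mu_M$ to reducibility and unitarizability. You have merely made explicit the Silberger machinery (Corollary~5.4.2.3 and Lemma~5.4.2.4 of \cite{sil79}, Lemmas~1.2 and 1.3 of \cite{sil80}) and the substitution of Proposition \ref{L-packet invariance of PM} for Theorem \ref{relation bw JL-type and PM}, and you correctly flag that the $\nu_0$ appearing in the corollary's statement is the $x_0 > 0$ produced in Proposition \ref{pro of L-packet invariance of red}.
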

\section{A Generalization} \label{Further Work}
Let $F$ be a $p$-adic field of characteristic $0$, and let $M$ be an $F$-Levi subgroup of a connected reductive $F$-group $G$. Let $G'$ be an $F$-inner form of $G$, and let $M'$ be an $F$-Levi subgroup of $G'$ that is an $F$-inner form of $M$. In this section, we extend Theorem \ref{relation bw JL-type and PM} to the case that unitary supercuspidal representations of $M(F)$ and $M'(F)$ have the same $L$-parameter under the following assumption. 
\begin{asr} \label{existence hyp}
There exist a finite set $V$ of places of $\F$ containing $S$ and all archimedean places, and two cuspidal automorphic representations $\pi = \otimes_v \pi_v$ of $\M(\AA)$ and $\pi' = \otimes_v \pi'_v$ of $\M'(\AA)$ such that
\begin{itemize}
  \item[(a)] for all $v \in S$, $\pi_v \s \sigma$ and $\pi'_v$ is in the $L$-packet of $\sigma'$,
  \item[(b)] for all  $v \in V - S$, $\pi_v $ and $\pi'_v$ are irreducible constituents of the restriction of an irreducible representation of $\wM(\F_v)$ to $\M(\F_v)$,
  \item[(c)] for all $v \notin V$, $\pi_v$ and $\pi'_v$ are isomorphic and unramified.
\end{itemize}
\end{asr}
\begin{rem} \label{remark existence}
This assumption has been fulfilled in Proposition \ref{existence pro} when a given Levi subgroup $M$ satisfies condition (\ref{cond on M in body}),
and generalizes the global Jacquet-Langlands correspondence for $GL_n$ to any connected reductive group.
\end{rem}

Now we state the following proposition which is a generalization of Theorem \ref{relation bw JL-type and PM}.
\begin{pro} \label{main theorem}
Let $\sigma$ and $\sigma'$ be irreducible unitary supercuspidal representations of $M(F)$ and $M'(F)$ having the same $L$-parameter. Suppose that Assumption \ref{existence hyp} is valid and that Plancherel measures are invariant on the $L$-packet of $\sigma'$. Then, we have
\[
\mu_{M}(\nu, \sigma, w) = \mu_{M'}(\nu, {\sigma}', w')
\]
for $\nu \in \ma^{*}_{\bM, \CC} \s \ma^{*}_{\bM', \CC}$.
\end{pro}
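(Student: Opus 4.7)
The plan is to mirror the proof of Theorem \ref{relation bw JL-type and PM} almost verbatim, substituting Assumption \ref{existence hyp} for Proposition \ref{existence pro intro} and invoking the hypothesized $L$-packet invariance of Plancherel measures on $M'(F)$ in place of Working Hypothesis \ref{whyp}. By Remarks \ref{useful rmks on pm} and \ref{remark for unitary and max}, I may assume that $G$ and $G'$ are semisimple and that $M$, $M'$ are maximal, so that $\ma^{*}_{\bM,\CC} \s \ma^{*}_{\bM',\CC} \s \CC$, and it suffices to prove the equality $\mu_{M}(s,\sigma,w) = \mu_{M'}(s,\sigma',w')$ for all $s \in \CC$.

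First, I invoke Theorem \ref{construction of G and G'} exactly as in Lemma \ref{const lemma} to manufacture a number field $\F$, a non-empty finite set $S$ of finite places with $\F_v \s F$ for $v\in S$, and $\F$-groups $\G,\G',\M,\M',\wM,\wM'$ whose local behavior matches the given $F$-data at places in $S$ and which become isomorphic at all places outside $S$. Second, by Assumption \ref{existence hyp}, I obtain a finite set $V\supseteq S$ and cuspidal automorphic representations $\pi=\otimes_v \pi_v$ of $\M(\AA)$ and $\pi'=\otimes_v \pi'_v$ of $\M'(\AA)$ satisfying conditions (a), (b), (c) of that assumption; in particular $\pi_v \s \sigma$ and $\pi'_v$ lies in the $L$-packet of $\sigma'$ for $v\in S$, while $\pi_v \s \pi'_v$ is unramified outside $V$.

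Third, I apply Langlands' functional equation to the standard global intertwining operators $M(s,\pi)$ and $M(s,\pi')$ acting on the Eisenstein series built from $\pi$ and $\pi'$. After pairing the identity $M(s,\pi)M(-s,w(\pi))=\mathrm{id}$ with its analogue for $\pi'$, this yields the two product formulas analogous to \eqref{identity of intertwining operators of M} and \eqref{identity of intertwining operators of M'}, i.e.
\begin{equation*}
\prod_{v \in V}\mu_{\M_v}(s,\pi_v,w)\,\gamma_{\tilde{w}}(\G_v|\M_v)^{-2}
\prod_{v\notin V}c_v(s,\pi_v)c_v(-s,w(\pi_v)) = 1,
\end{equation*}
and the same identity with $\pi'$, $w'$, $\G'_v$, $\M'_v$. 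Since $\pi_v\s\pi'_v$ for $v\notin V$ the unramified $c$-factors cancel; for $v\in V-S$, condition (b) of Assumption \ref{existence hyp} together with the Keys–Shahidi formula \cite{keysh} for non-archimedean places and Arthur's formula \cite{arthur89f} for archimedean places expresses both $\mu_{\M_v}(s,\pi_v,w)$ and $\mu_{\M'_v}(s,\pi'_v,w')$ through the Artin factors of the common $L$-parameter of the ambient representation of $\wM(\F_v)\s\wM'(\F_v)$, forcing them to agree. Using $\gamma_{\tilde{w}}(\G_v|\M_v)=\gamma_{\tilde{w}}(\G'_v|\M'_v)$, what survives is
\[
\prod_{v\in S}\mu_{\M_v}(s,\pi_v,w)=\prod_{v\in S}\mu_{\M'_v}(s,\pi'_v,w').
\]

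Finally, at each $v\in S$ I use $\pi_v\s\sigma$ on the left and the hypothesized $L$-packet invariance of the Plancherel measure on $M'(F)$ on the right to replace $\mu_{\M'_v}(s,\pi'_v,w')$ by $\mu_{M'}(s,\sigma',w')$. This gives $\mu_M(s,\sigma,w)^{m}=\mu_{M'}(s,\sigma',w')^{m}$ with $m=|S|$, and holomorphy plus non-negativity of Plancherel measures along the unitary axis (Remark \ref{useful rmks on pm}) promotes this equality of $m$-th powers to the equality of meromorphic functions on all of $\ma^{*}_{\bM,\CC}$. The main obstacle is conceptual rather than technical: the whole argument rests on the existence of the global pair $(\pi,\pi')$ in Assumption \ref{existence hyp}, which here is taken as input — so the proposition essentially packages the local-to-global machinery of Section \ref{main thms} into a conditional statement, with the existence of compatible global lifts and $L$-packet stability replacing the two places where Section \ref{main thms} uses stronger structural input specific to groups satisfying \eqref{cond on M in body}.
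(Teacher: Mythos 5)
Your proof is correct and takes exactly the approach the paper indicates: the paper's own proof is the one-line remark that one simply reruns the proof of Theorem \ref{relation bw JL-type and PM} with Proposition \ref{existence pro} replaced by Assumption \ref{existence hyp} (and Working Hypothesis \ref{whyp} replaced by the hypothesized $L$-packet invariance). You have merely unpacked that substitution step by step, which matches the intended argument.
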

\begin{proof}
This is proved by replacing Proposition \ref{existence pro} with Hypothesis \ref{existence hyp} from the proof of Theorem \ref{relation bw JL-type and PM}.
\end{proof}
\appendix
\section{Examples} \label{examples}
We continue with the notation in Sections \ref{prelim} and \ref{main thms}.
We give a few examples of an $F$-Levi subgroup $M$ and its $F$-inner form $M'$ satisfying condition (\ref{cond on M in body}) based on the Satake classification \cite[Section 3]{sa71}.
In the following diagrams (Satake diagrams) a black vertex indicates a root in the set of simple roots of the fixed minimal $F$-Levi subgroup $M'_0$ of $G'$. So, we remove only white vertices to obtain an $F$-Levi subgroup $M'$ (see \cite[Section 2.2]{sa71} and \cite[Section I.3]{bo79}). We focus on the case that $M$ is maximal (cf. Remark \ref{remark for unitary and max}). 

\noindent (1) \textbf{$\boldsymbol{A_n}$ cases}

\[
\xy 
\POS (10,0) *{\bullet} ="a" ,
\POS (15,0) *{\bullet} ="b" ,
\POS (20,0) *\cir<0pt>{} ="c" ,
\POS (25,0) *\cir<0pt>{} ="d" ,
\POS (30,0) *{\bullet} ="e" ,
\POS (35,0) *\cir<2pt>{} ="f",

\POS (40,0) *{\bullet} ="a1" ,
\POS (45,0) *{\bullet} ="b1" ,
\POS (50,0) *\cir<0pt>{} ="c1" ,
\POS (55,0) *\cir<0pt>{} ="d1" ,
\POS (60,0) *{\bullet} ="e1" ,
\POS (65,0) *\cir<2pt>{} ="f1",

\POS (70,0) *{\bullet} ="a2" ,
\POS (75,0) *\cir<0pt>{} ="b2" ,
\POS (80,0) *\cir<0pt>{} ="c2" ,
\POS (85,0) *\cir<2pt>{} ="d2" ,

\POS (90,0) *{\bullet} ="a3" ,
\POS (95,0) *{\bullet} ="b3" ,
\POS (100,0) *\cir<0pt>{} ="c3" ,
\POS (105,0) *\cir<0pt>{} ="d3" ,
\POS (110,0) *{\bullet} ="e3" ,

\POS "a" \ar@{-}^<<<{}_<<{}  "b",
\POS "b" \ar@{-}^<<<{}_<<{}  "c",
\POS "c" \ar@{.}^<<<{}_<<{}  "d",
\POS "d" \ar@{-}^<<<{}_<<{}  "e",
\POS "e" \ar@{-}^<<<{}_<<{}  "f",

\POS "f" \ar@{-}^<{a_d}_<<<{} "a1",

\POS "a1" \ar@{-}^<<<{}_<<{}  "b1",
\POS "b1" \ar@{-}^<<<{}_<<{}  "c1",
\POS "c1" \ar@{.}^<<<{}_<<{}  "d1",
\POS "d1" \ar@{-}^<<<{}_<<{}  "e1",
\POS "e1" \ar@{-}^<<<{}_<<{}  "f1",

\POS "f1" \ar@{-}^<{a_{2d}}_<<<{} "a2",

\POS "a2" \ar@{-}^<<<{}_<<{}  "b2",
\POS "b2" \ar@{.}^<<<{}_<<{}  "c2",
\POS "c2" \ar@{-}^<<<{}_<<{}  "d2",

\POS "d2" \ar@{-}^<{a_{n-d+1}}_<<<{} "a3",

\POS "a3" \ar@{-}^<<<{}_<<{}  "b3",
\POS "b3" \ar@{-}^<<<{}_<<{}  "c3",
\POS "c3" \ar@{.}^<<<{}_<<{}  "d3",
\POS "d3" \ar@{-}^<<<{}_<<{}  "e3",

\POS "a" \ar@{.}@/^1pc/|{d-1}  "e",
\POS "a1" \ar@{.}@/^1pc/|{d-1}  "e1",
\POS "a3" \ar@{.}@/^1pc/|{d-1}  "e3",

\endxy
\]

Set $\theta = \Delta - \lbrace \alpha_{j} \rbrace$, where $\alpha_{j} = e_j-e_{j+1}$ for $j = d, 2d, \cdots, md$. Note that $md = n-d+1$.

\begin{itemize}
\item[(a)] Let $G = GL_{n+1}$. Note that $G'(F) = GL_{m+1}(D_d)$, where $n+1=d(m+1)$. 
Then, we have 
\[M=M_{\theta} = GL_{m_1d} \times GL_{m_2d} = \tM,
\]
 where $m_1d + m_2d = n+1$. So, $M'(F) = GL_{m_1}(D_d) \times GL_{m_2}(D_d)$.

\item[(b)] Let $G = SL_{n+1}$. Note that $G'(F)=SL_{m+1}(D_d)$, where $n+1=d(m+1)$. Then, we have
\[
M=M_{\theta} = G \cap (GL_{m_1d} \times GL_{m_2d}) \hookrightarrow GL_{m_1d} \times GL_{m_2d}=\tM,
\]
 where $m_1d + m_2d = n+1$. So, $M'(F) = G'(F) \cap (GL_{m_1}(D_d) \times GL_{m_2}(D_d))$.
  \end{itemize}

\noindent (2) \textbf{$\boldsymbol{B_n}$ cases}

\[
\xy 
\POS (10,0) *\cir<2pt>{} ="a" ,
\POS (20,0) *\cir<2pt>{} ="b" ,
\POS (30,0) *\cir<0pt>{} ="c" ,
\POS (40,0) *\cir<0pt>{} ="d" ,
\POS (50,0) *\cir<2pt>{} ="e" ,
\POS (60,0) *\cir<2pt>{} ="f",
\POS (70,0) *{\bullet} ="g"

\POS "a" \ar@{-}^<<<{}_<<{}  "b",
\POS "b" \ar@{-}^<<<{}_<<{}  "c",
\POS "c" \ar@{.}^<<<{}_<<{}  "d",
\POS "d" \ar@{-}^<<<{}_<<{}  "e",
\POS "e" \ar@{-}^<<<{}_<<{}  "f",
\POS "f" \ar@{=>}^<{\alpha_{n-1}}_<<<<<<{} "g",
\endxy
\]

Set $\theta = \Delta - \lbrace \alpha_{n-1} \rbrace$, where $\alpha_{n-1} = e_{n-1}-e_n$. 
\begin{itemize}
\item[(a)] Let $G = Spin_{2n+1}$. Then, we have \[
M=M_{\theta} \s GL_n \times SL_2 \hookrightarrow \tM = GL_n \times GL_2.
\]
 So, $M'(F) \s GL_n(F) \times SL_1(D_2)$.

\item[(b)] Let $G = GSpin_{2n+1}$. Then, we have 
\[
M=M_{\theta} \s GL_n \times GL_2=\tM.
\]
 So, $M'(F) \s GL_n(F) \times GL_1(D_2)$.
\end{itemize}

\noindent (3) \textbf{$\boldsymbol{C_{n}}$ cases}

($\mathbf{n}$\textbf{: even})

\[
\xy 
\POS (10,0) *{\bullet} ="a" ,
\POS (20,0) *\cir<2pt>{} ="b" ,
\POS (30,0) *{\bullet} ="c" ,
\POS (40,0) *\cir<2pt>{} ="d" ,
\POS (50,0) *\cir<0pt>{} ="e",
\POS (60,0) *\cir<0pt>{} ="f",
\POS (70,0) *{\bullet} ="g",
\POS (80,0) *\cir<2pt>{} ="h",
\POS (90,0) *{\bullet} ="i",
\POS (100,0) *\cir<2pt>{} ="j",

\POS "a" \ar@{-}^<<<{}_<<{}  "b",
\POS "b" \ar@{-}^<<<{}_<<{}  "c",
\POS "c" \ar@{-}^<<<{}_<<{}  "d",
\POS "d" \ar@{-}^<<<{}_<<{}  "e",
\POS "e" \ar@{.}^<{}_<<<{} "f",
\POS "f" \ar@{-}^<{}_<<<{} "g",
\POS "g" \ar@{-}^<{}_<<<{} "h",
\POS "h" \ar@{-}^<{}_<<<{} "i",
\POS "i" \ar@{<=}^>{\alpha_{n}}^>>>{} "j",

\endxy
\]
\begin{center} 
(every other dot black)
\end{center}

Set $\theta = \Delta - \lbrace \alpha_n \rbrace$, where $\alpha_n = 2e_n$.
\begin{itemize}
\item[(a)] Let $\G = Sp_{2n}$. Then, we have 
\[
M = M_{\theta}  \s GL_n = \tM,
\]
 which is the Siegel Levi subgroup. So, $M'(F) \s GL_{n/2}(D_2)$.

\item[(b)] Let $\bG = GSp_{2n}$. Then, we have 
\[
M=M_{\theta} \s GL_n \times GL_1  = \tM.
\]
 So, $M'(F) \s GL_{n/2}(D_2) \times GL_1(F)$.
\end{itemize}

($\mathbf{n}$\textbf{: odd})
\[
\xy 
\POS (10,0) *{\bullet} ="a" ,
\POS (20,0) *\cir<2pt>{} ="b" ,
\POS (30,0) *{\bullet} ="c" ,
\POS (40,0) *\cir<2pt>{} ="d" ,
\POS (50,0) *\cir<0pt>{} ="e" ,
\POS (60,0) *\cir<0pt>{} ="f",
\POS (70,0) *{\bullet} ="g",
\POS (80,0) *\cir<2pt>{} ="h",
\POS (90,0) *{\bullet} ="i",

\POS "a" \ar@{-}^<<<{}_<<{}  "b",
\POS "b" \ar@{-}^<<<{}_<<{}  "c",
\POS "c" \ar@{-}^<<<{}_<<{}  "d",
\POS "d" \ar@{-}^<<<{}_<<{}  "e",
\POS "e" \ar@{.}^<<<{}_<<{}  "f",
\POS "f" \ar@{-}^<{}_<<<{} "g",
\POS "g" \ar@{-}^<{}_<<<{} "h",
\POS "h" \ar@{<=}^<{\alpha_{n-1}}_<<<{} "i",
\endxy
\]
\begin{center} 
(every other dot black)
\end{center}

Set $\theta = \Delta - \lbrace \alpha_{n-1} \rbrace$, where $\alpha_{n-1}=e_{n-1}-e_n$.
\begin{itemize}
\item[(c)] Let $G = Sp_{2n}$. Then, we have 
\[
M=M_{\theta} \s GL_{n-1} \times SL_2 \hookrightarrow GL_{n-1} \times GL_2 = \tM.
\]
 So, $M'(F) \s GL_{(n-1)/2}(D_2) \times SL_1(D_2)$.

\item[(d)] Let $G = GSp_{2n}$. Then, we have 
\[
M=M_{\theta} \s GL_n \times GL_2 = \tM.
\]
 So, $M'(F) \s GL_{(n-1)/2}(D_2) \times GL_1(D_2)$.
\end{itemize}

\noindent (4) \textbf{$\boldsymbol{D_{n}}$ cases}

($\mathbf{D_n -1}$)

\[
\xy
\POS (10,0) *{\bullet} ="a" ,
\POS (20,0) *\cir<2pt>{} ="b" ,
\POS (30,0) *{\bullet} ="c" ,
\POS (40,0) *\cir<2pt>{} ="d" ,
\POS (50,0) *\cir<0pt>{} ="e",
\POS (60,0) *\cir<0pt>{} ="f",
\POS (70,0) *{\bullet} ="g",
\POS (80,0) *\cir<2pt>{} ="h",
\POS (90,5) *{\bullet} ="i",
\POS (90,-5) *\cir<2pt>{} ="j",

\POS "a" \ar@{-}^<<<{}_<<{}  "b",
\POS "b" \ar@{-}^<<<{}_<<{}  "c",
\POS "c" \ar@{-}^<<<{}_<<{}  "d",
\POS "d" \ar@{-}^<<<{}_<<{}  "e",
\POS "e" \ar@{.}^<<<{}_<<{}  "f",
\POS "f" \ar@{-}^<{}_<<<{} "g",
\POS "g" \ar@{-}^<{}_<<<{} "h",
\POS "h" \ar@{-}^<<{}_<<<{} "i",
\POS "h" \ar@{-}^>{\alpha_{n}}_>>{} "j",
\endxy
\qquad (n: \text{even})
\]

Set $\theta = \Delta - \lbrace \alpha_n \rbrace$, where $\alpha_n = e_{n-1}+e_n$.
\begin{itemize}
\item[(a)] Let $G = Spin_{2n}$. From \cite{kim05, sh88} we have 
\[
M_{\der} = SL_n 
\hookrightarrow 
M=M_{\theta}
\hookrightarrow
 GL_1 \times GL_n = \tM.
\]
So, $M'(F)
\hookrightarrow
 GL_1(F) \times GL_{n/2}(D_2) = \tM'(F).
$

\item[(b)] Let $G = GSpin_{2n}$. Then, we have 
\[
M=M_{\theta} \s GL_1 \times GL_n  = \tM.
\]
 So, $M'(F) \s GL_1 \times GL_{n/2}(D_2)$. 

\item[(c)] Let $G = SO_{2n}$. Then, we have 
\[
M = M_{\theta}  \s GL_n = \tM,
\]
 which is the Siegel Levi subgroup. So, $M'(F) \s GL_{n/2}(D_2)$.
\end{itemize}

($\mathbf{D_n -2}$)

\[
\xy
\POS (10,0) *\cir<2pt>{} ="a" ,
\POS (20,0) *\cir<2pt>{} ="b" ,
\POS (30,0) *\cir<0pt>{} ="c" ,
\POS (40,0) *\cir<0pt>{} ="d" ,
\POS (50,0) *\cir<2pt>{} ="e",
\POS (60,0) *\cir<2pt>{} ="f",
\POS (70,5) *{\bullet} ="g",
\POS (70,-5) *{\bullet} ="h",

\POS "a" \ar@{-}^<<<{}_<<{}  "b",
\POS "b" \ar@{-}^<<<{}_<<{}  "c",
\POS "c" \ar@{.}^<<<{}_<<{}  "d",
\POS "d" \ar@{-}^<<<{}_<<{}  "e",
\POS "e" \ar@{-}^<<<{}_<<{} "f",
\POS "f" \ar@{-}^<<{\alpha_{n-2}}_<<<{} "g",
\POS "f" \ar@{-}^<<<{}_<<{} "h",
\endxy
\qquad (\text{any}~n)
\]

\[
\xy
\POS (10,0) *{\bullet} ="a" ,
\POS (20,0) *\cir<2pt>{} ="b" ,
\POS (30,0) *{\bullet} ="c" ,
\POS (40,0) *\cir<2pt>{} ="d" ,
\POS (50,0) *\cir<0pt>{} ="e",
\POS (60,0) *\cir<0pt>{} ="f",
\POS (70,0) *{\bullet} ="g",
\POS (80,0) *\cir<2pt>{} ="h",
\POS (90,5) *{\bullet} ="i",
\POS (90,-5) *\cir<2pt>{} ="j",

\POS "a" \ar@{-}^<<<{}_<<{}  "b",
\POS "b" \ar@{-}^<<<{}_<<{}  "c",
\POS "c" \ar@{-}^<<<{}_<<{}  "d",
\POS "d" \ar@{-}^<<<{}_<<{}  "e",
\POS "e" \ar@{.}^<<<{}_<<{}  "f",
\POS "f" \ar@{-}^<{}_<<<{} "g",
\POS "g" \ar@{-}^<{}_<<<{} "h",
\POS "h" \ar@{-}^<<{\alpha_{n-2}}_<<<{} "i",
\POS "h" \ar@{-}^>{}_>>{} "j",
\endxy
\qquad (n: \text{even})
\]

Set $\theta = \Delta - \lbrace \alpha_{n-2} \rbrace$, where $\alpha_{n-2} = e_{n-2}-e_{n-1}$. Note that, for each $M$ of type $D_n -2$, there are two inequivalent $F$-inner forms $M'$.
\begin{itemize}
\item[(d)] Let $\bG = Spin_{2n}$. From \cite{kim05, sh88} we have 
\begin{align*}
M_{\der} \s SL_{n-2}\times SL_2 \times SL_2
 &  \hookrightarrow M=M_{\theta} \\
& \hookrightarrow
GL_1 \times GL_{n-2} \times GL_2 \times GL_2 = \tM.
\end{align*}
So, $M'(F)
\hookrightarrow
GL_1(F) \times GL_{n-2}(F) \times GL_1(D_2) \times GL_1(D_2) = \tM'(F)$ for the upper diagram $(\text{any} ~ n)$; $M'(F)
\hookrightarrow GL_1(F) \times GL_{n-2}(F) \times GL_1(D_2) \times GL_2(F) = \tM'(F)$ for the lower diagram $(n:\text{even})$.

\item[(e)] Let $\bG = GSpin_{2n}$. Then, we have 
\[
M=M_{\theta} \s GL_{n-2} \times GL_2 \times GL_2 = \tM.
\]
 So, $M'(F)
\s
GL_1(F) \times GL_{n-2}(F) \times GL_1(D_2) \times GL_1(D_2)$ for the upper diagram $(\text{any} ~ n)$; $M'(F)
\s GL_1(F) \times GL_{n-2}(F) \times GL_1(D_2) \times GL_2(F)$ for the lower diagram $(n:\text{even})$.
\end{itemize}

($\mathbf{D_n -3}$)

\[
\xy
\POS (10,0) *\cir<2pt>{} ="a" ,
\POS (20,0) *\cir<2pt>{} ="b" ,
\POS (30,0) *\cir<0pt>{} ="c" ,
\POS (40,0) *\cir<0pt>{} ="d" ,
\POS (50,0) *\cir<2pt>{} ="e",
\POS (60,0) *\cir<2pt>{} ="f",
\POS (70,5) *{\bullet} ="g",
\POS (70,-5) *{\bullet} ="h",

\POS "a" \ar@{-}^<<<{}_<<{}  "b",
\POS "b" \ar@{-}^<<<{}_<<{}  "c",
\POS "c" \ar@{.}^<<<{}_<<{}  "d",
\POS "d" \ar@{-}^<<<{}_<<{}  "e",
\POS "e" \ar@{-}^<{\alpha_{n-3}}_<<<{} "f",
\POS "f" \ar@{-}^<<<{}_<<{} "g",
\POS "f" \ar@{-}^<<<{}_<<{} "h",
\endxy
\qquad (\text{any}~n)
\]

\[
\xy
\POS (10,0) *{\bullet} ="a" ,
\POS (20,0) *\cir<2pt>{} ="b" ,
\POS (30,0) *{\bullet} ="c" ,
\POS (40,0) *\cir<2pt>{} ="d" ,
\POS (50,0) *\cir<0pt>{} ="e",
\POS (60,0) *\cir<0pt>{} ="f",
\POS (70,0) *{\bullet} ="g",
\POS (80,0) *\cir<2pt>{} ="h",
\POS (90,0) *{\bullet} ="i",
\POS (100,5) *{\bullet} ="j",
\POS (100,-5) *{\bullet} ="k",

\POS "a" \ar@{-}^<<<{}_<<{}  "b",
\POS "b" \ar@{-}^<<<{}_<<{}  "c",
\POS "c" \ar@{-}^<<<{}_<<{}  "d",
\POS "d" \ar@{-}^<<<{}_<<{}  "e",
\POS "e" \ar@{.}^<<<{}_<<{}  "f",
\POS "f" \ar@{-}^<<<{}_<<{} "g",
\POS "g" \ar@{-}^<<<{}_<<{} "h",
\POS "h" \ar@{-}^<{\alpha_{n-3}}_<<<{} "i",
\POS "i" \ar@{-}^<<<{}_<<{} "j",
\POS "i" \ar@{-}^<<<{}_<<{} "k",
\endxy
\qquad (n: \text{odd})
\]

Set $\theta = \Delta - \lbrace \alpha_{n-3} \rbrace$, where $\alpha_{n-3} = e_{n-3}-e_{n-2}$.
\begin{itemize}
\item[(f)] Let $G = Spin_{2n}$. From \cite{kim05, sh88} we have
\[
M_{\der} \s SL_{n-3} \times SL_4
\hookrightarrow 
M=M_{\theta}
\hookrightarrow
GL_1 \times GL_{n-3} \times GL_4 = \tM.
\]
So, $M'(F)
\hookrightarrow
GL_1(F) \times GL_{n-3}(F) \times GL_2(D_2) = \tM'(F)$ 
for the upper diagram $(\text{any} ~ n)$;
$M'(F)
\hookrightarrow
GL_1(F) \times GL_{(n-3)/2}(D_2) \times GL_1(D_4) = \tM'(F)$ for the lower diagram $(n:\text{odd})$.
\item[(g)] Let $G = GSpin_{2n}$. Then, we have
\[ 
M=M_{\theta}
\s 
GL_{n-2} \times GL_4 = \tM.
\]
So, $M'(F) \s GL_{n-2}(F) \times GL_2(D_2)$ for the upper diagram $(\text{any} ~ n)$;
$M'(F) \s GL_{(n-2)/2}(D_2) \times GL_1(D_4)$ for the lower diagram $(n:\text{odd})$.
\end{itemize}

\noindent (5) $\boldsymbol{E_{6}}$ cases

\[
\xy
\POS (0,0) *{\bullet} ="a" ,
\POS (10,0) *{\bullet} ="b" ,
\POS (20,-10) *\cir<2pt>{} ="c" ,
\POS (20,0) *\cir<2pt>{} ="d" ,
\POS (30,0) *{\bullet} ="e" ,
\POS (40,0) *{\bullet} ="f" ,
\POS (50,0)  ="h", 
\POS "a" \ar@{-}^<<<{}_<<{}  "b",
\POS "b" \ar@{-}^<<<{}_<<{}  "d",
\POS "c" \ar@{-}_<<{\alpha_{6}}^<<{}  "d",
\POS "d" \ar@{-}^<<<{\alpha_{3}}_<<{}  "e",
\POS "e" \ar@{-}^<<<{}_<<{}  "f",
\POS "f" \ar@{}^<<<{}_<<{}  "h",
\endxy
\]
\begin{itemize}
\item[(a)] ($\mathbf{E_6 -1}$) Let $G$ be a simply connected group of type $E_6$. Set $\theta = \Delta - \lbrace \alpha_3  \rbrace$, where $\alpha_3 = e_3-e_4$. From \cite{kim05, sh88} we have
\begin{align*}
M_{\der} \s SL_3 \times SL_3  \times SL_2 
& \hookrightarrow 
M=M_{\theta}  \\
& \hookrightarrow
GL_1 \times GL_3 \times GL_3 \times GL_2 = \tM.
\end{align*}
So, $M'(F) \hookrightarrow
GL_1 \times GL_1(D_3) \times GL_1(D_3) \times GL_2(F) = \tM'(F) $.

\item[(b)] (\textbf{(x) in \cite{la67}}) Let $G$ be a simply connected group of type $E_6$. Set $\theta = \Delta - \lbrace \alpha_6 \rbrace$, where $\alpha_6 = e_4+e_5+e_6 +\epsilon$. From \cite{kim05, sh88} we have
\[
M_{\der} \s SL_6 
\hookrightarrow 
M=M_{\theta}
\hookrightarrow
GL_1 \times GL_6 = \tM.
\]
So, $M'(F) \hookrightarrow
GL_1(F) \times GL_2(D_2) = \tM'(F) $.

\item[(c)] Let $G$ be a simply connected group of type $E_6$. Set $\theta = \Delta - \lbrace \alpha_3, \alpha_6 \rbrace$. Then, we have $M_{\der} \s SL_3 \times SL_3 
\hookrightarrow 
M=M_{\theta}
\hookrightarrow
GL_1 \times GL_3 \times GL_3 = \tM $ and $M'(F) \hookrightarrow
GL_1(F) \times GL_1(D_3) \times GL_1(D_3) = \tM'(F) $.
\\

\noindent Note that (a), (b) and (c) above are all possible types of $F$-Levi subgroups of $M'$.
\end{itemize}

\noindent (6) \textbf{$\boldsymbol{E_{7}}$ cases}

\[
\xy 
\POS (10,0) *\cir<2pt>{} ="b" ,
\POS (30,-10) *{\bullet} ="c" ,
\POS (20,0) *\cir<2pt>{} ="d" ,
\POS (30,0) *\cir<2pt>{} ="e" ,
\POS (40,0) *{\bullet} ="f" ,
\POS (50,0) *\cir<2pt>{} ="h",
\POS (60,0) ="i",
\POS (70,0) ="j",
\POS (60,0) *{\bullet} ="i"
\POS "b" \ar@{-}^<<<{}_<<{}  "d",
\POS "c" \ar@{-}^<<<{}_<<{}  "e",
\POS "d" \ar@{-}^<<<{\alpha_{5}}_<<{}  "e",
\POS "e" \ar@{-}^<<<{\alpha_{4}}_<<{}  "f",
\POS "f" \ar@{-}^<<<{}_<<{}  "h",
\POS "h" \ar@{}^<<<{}_<<{}  "j",
\POS "h" \ar@{-}^<<<{}_<<{} "i",
\endxy
\]

\begin{itemize}
\item[(a)] ($\mathbf{E_7 -1}$) Let $G$ be a simply connected group of type $E_7$. Set $\theta = \Delta - \lbrace \alpha_4  \rbrace$, where $\alpha_4 = e_4-e_5$. From \cite{kim05, sh88} we have
\begin{align*}
M_{\der} \s SL_2 \times SL_3  \times SL_4 
& \hookrightarrow 
M=M_{\theta} \\
& \hookrightarrow
GL_1 \times GL_2 \times GL_3 \times GL_4 = \tM.
\end{align*}
So, $M(F) \hookrightarrow
GL_1(F) \times GL_1(D_2) \times GL_3(F) \times GL_2(D_2) = \tM'(F)$.
\item[(b)] ($\mathbf{E_7 -4}$) Let $G$ be a simply connected group of type $E_7$. Set $\theta = \Delta - \lbrace \alpha_5  \rbrace$, where $\alpha_5 = e_5-e_6$. From \cite{kim05, sh88} we have
\[
M_{\der} \s SL_6 \times SL_2 
\hookrightarrow 
M=M_{\theta}
\hookrightarrow
GL_1 \times GL_6 \times GL_2 = \tM.
\]
So, $M(F) \hookrightarrow GL_1(F) \times GL_3(D_2) \times GL_2(F) = \tM'(F)$.
\end{itemize}

\noindent (7) \textbf{$\boldsymbol{E_8}$, $\boldsymbol{F_{4}}$ and $\boldsymbol{G_2}$ cases}

Any connected reductive algebraic $F$-group $G$ of type $E_8$, $F_{4}$, or $G_2$ does not have non quasi-split $F$-inner forms of $G$ (see Proposition \ref{kot iso}). 

\end{document}